\g@addto@macro\normalsize{%
  \setlength\abovedisplayskip{4pt}
  \setlength\belowdisplayskip{4pt}
  \setlength\abovedisplayshortskip{4pt}
  \setlength\belowdisplayshortskip{4pt}
}
\numberwithin{equation}{section}
\crefname{section}{Section}{Sections}
\crefname{subsection}{Subsection}{Subsections}
\crefname{condition}{Condition}{Conditions}
\crefname{hypothesis}{Hypothesis}{Conditions}
\crefname{assumption}{Assumption}{Assumptions}
\crefname{lemma}{Lemma}{Lemmas}
\newtheorem{theorem}{Theorem}[section]
\newtheorem{lemma}[theorem]{Lemma}
\newtheorem{definition}[theorem]{Definition}
\newtheorem{remark}[theorem]{Remark}        
\newtheorem{claim}[theorem]{Claim}
\numberwithin{equation}{section}
\def\aa{\mathcal{A}}
\newcommand{\lamot}{\La_0,\La_1}
\newcommand{\pa}{\partial}
\newcommand{\vlh}{\lsbt{v}{\la,h}}
\newcommand{\elam}{\lsbo{E}{\lambda}}
\newcommand{\mm}{\mathcal{M}}
\newcommand{\tQ}{\tilde{Q}}
\newcommand{\tx}{\tilde{x}}
\newcommand{\tlt}{\tilde{t}}
\newcommand{\tz}{\tilde{z}}
\newcommand{\vo}{\vec{o}\@ifnextchar{^}{\,}{}}
\def\Yint#1{\mathchoice
    {\YYint\displaystyle\textstyle{#1}}%
    {\YYint\textstyle\scriptstyle{#1}}%
    {\YYint\scriptstyle\scriptscriptstyle{#1}}%
    {\YYint\scriptscriptstyle\scriptscriptstyle{#1}}%
      \!\iint}
\def\YYint#1#2#3{{\setbox0=\hbox{$#1{#2#3}{\iint}$}
    \vcenter{\hbox{$#2#3$}}\kern-.50\wd0}}
\def\longdash{-\mkern-9.5mu-} 
\def\tiltlongdash{\rotatebox[origin=c]{18}{$\longdash$}}
\def\fiint{\Yint\tiltlongdash}
\def\Xint#1{\mathchoice
    {\XXint\displaystyle\textstyle{#1}}%
    {\XXint\textstyle\scriptstyle{#1}}%
    {\XXint\scriptstyle\scriptscriptstyle{#1}}%
    {\XXint\scriptscriptstyle\scriptscriptstyle{#1}}%
      \!\int}
\def\XXint#1#2#3{{\setbox0=\hbox{$#1{#2#3}{\int}$}
    \vcenter{\hbox{$#2#3$}}\kern-.50\wd0}}
\def\hlongdash{-\mkern-13.5mu-}
\def\tilthlongdash{\rotatebox[origin=c]{18}{$\hlongdash$}}
\def\hint{\Xint\tilthlongdash}
\def\namedlabel#1#2{\begingroup
   \def\@currentlabel{#2}%
   \label{#1}\endgroup
}
\newcommand{\rmh}[1]{\mathpalette{\raisem@th{#1}}}
\newcommand{\raisem@th}[3]{\hspace*{-1pt}\raisebox{#1}{$#2#3$}}
\newcommand{\lsb}[2]{#1_{\rmh{-3pt}{#2}}}
\newcommand{\lsbo}[2]{#1_{\rmh{-1pt}{#2}}}
\newcommand{\lsbt}[2]{#1_{\rmh{-2pt}{#2}}}
\newcommand{\redref}[2]{\texorpdfstring{\protect\hyperlink{#1}{\textcolor{black}{(}\textcolor{red}{#2}\textcolor{black}{)}}}{}}
\newcommand{\redlabel}[2]{\hypertarget{#1}{\textcolor{black}{(}\textcolor{red}{#2}\textcolor{black}{)}}}
\newcommand{\descitem}[1]{\item[(#1)] \label{#1}}
\newcommand{\descref}[1]{\hyperref[#1]{\textnormal{\textcolor{black}{(}\textcolor{blue}{\bf #1}\textcolor{black}{)}}}}
\newcommand{\dref}[2]{\hyperref[#1]{\textcolor{black}{(}\textcolor{blue}{\bf #2}\textcolor{black}{)}}}
\newcommand{\mfx}{\mathfrak{x}}
\newcommand{\mfz}{\mathfrak{z}}
\newcommand{\mft}{\mathfrak{t}}
\newcommand\RR{\mathbb{R}}
\newcommand\NN{\mathbb{N}}
\newcommand{\al}{\alpha}
\newcommand{\be}{\beta}
\newcommand{\ga}{\gamma}
\newcommand{\de}{\delta}
\newcommand{\ve}{\varepsilon}
\newcommand{\ep}{\epsilon}
\newcommand{\ka}{\kappa}
\newcommand{\om}{\omega}
\newcommand{\la}{\lambda}
\newcommand{\vt}{\vartheta}
\newcommand{\Th}{\Theta}
\newcommand{\Om}{\Omega}
\newcommand{\La}{\Lambda}
\DeclareMathOperator{\dv}{div}
\DeclareMathOperator{\diam}{diam}
\DeclareMathOperator{\loc}{loc}
\newcommand{\iprod}[2]{\langle #1,  #2\rangle}
\newcommand{\norm}[1]{\left|\hspace{-0.2mm}\left| #1 \right|\hspace{-0.2mm}\right|}
\newcommand{\abs}[1]{\left| #1\right|}
\newcommand{\lbr}[1][(]{\left#1}
\newcommand{\rbr}[1][)]{\right#1}
\newcommand{\txt}[1]{\qquad \text{#1} \qquad}
\newcommand{\gflt}{$(\ga,S_0)$-Reifenberg flat }
\newcommand{\aaa}{\overline{\mathcal{A}}}
\newcounter{whitney}
\newcounter{ineqcounter}
\def\ps@pprintTitle{%
\let\@oddhead\@empty
\let\@evenhead\@empty
\def\@oddfoot{}%
\let\@evenfoot\@oddfoot}
\newcommand{\lv}{\lvert}
\newcommand{\rv}{\rvert}
\newcommand{\lV}{\lVert}
\newcommand{\rV}{\rVert}
\newcommand{\vf}{\vec{f}}
\newcommand{\na}{\nabla}
\newcommand{\vsp}{\vspace{1em}}
\newcommand{\lt}{\left}
\newcommand{\rt}{\right}
\DeclareRobustCommand{\rchi}{{\mathpalette\irchi\relax}}
\newcommand{\irchi}[2]{\raisebox{\depth}{$#1\chi$}} 
\begin{document}

\begin{frontmatter}

\title{An existence result for nonhomogeneous quasilinear parabolic equations beyond the duality pairing}

\author[myaddress,myaddresstwo]{Karthik Adimurthi\corref{mycorrespondingauthor}\tnoteref{thanksfirstauthor}}
\cortext[mycorrespondingauthor]{Corresponding author}
\ead{karthikaditi@gmail.com and kadimurthi@snu.ac.kr}
\tnotetext[thanksfirstauthor]{Supported by the National Research Foundation of Korea grant NRF-2017R1A2B2003877.}

\author[myaddress,myaddresstwo]{Sun-Sig Byun\tnoteref{thankssecondauthor}}
\ead{byun@snu.ac.kr}
\tnotetext[thankssecondauthor]{Supported by the National Research Foundation of Korea grant  NRF-2015R1A4A1041675. }

\author[myaddress]{Wontae Kim\tnoteref{thanksfirstauthor}}
\ead{m20258@snu.ac.kr}

\address[myaddress]{Department of Mathematical Sciences, Seoul National University, Seoul 08826, Korea.}
\address[myaddresstwo]{Research Institute of Mathematics, Seoul National University, Seoul 08826, Korea.}

\begin{abstract}
In this paper, we prove existence of \emph{very weak solutions} to nonhomogeneous quasilinear parabolic equations beyond the duality pairing. The main ingredients are a priori esitmates in suitable weighted spaces combined with the compactness argument developed in \cite{bulicek2018well}. In order to obtain the a priori estimates, we make use of the full Calder\'on-Zygmund machinery developed in the past few years and combine it with some sharp bounds for the subclass of Muckenhoupt weights considered in this paper.  

\end{abstract}

\begin{keyword}
Very weak solution \sep quasilinear parabolic equation\sep Muckenhoupt weight \sep parabolic Lipschitz truncation.
 \MSC[2010] 35K20 \sep 35K55 \sep 35K92.
\end{keyword}

\end{frontmatter}

\section{Introduction}
\label{section1}

In this paper, we are interested in obtaining existence of \emph{very weak solution} to equations of the form
\begin{equation}
    \label{main}
    \left\{\begin{array}{ll}
        u_t - \dv \aa(x,t,\nabla u + \vec{h}) = - \dv |\vec{f}|^{p-2} \vec{f} & \txt{in} \Om \times (0,T),\\
        u = 0 & \txt{on} \pa_p(\Om \times (0,T)),
    \end{array}\right.
\end{equation}
where $\aa(x,t,\zeta)$ is modelled after the well known $p$-Laplace operator, $\Om$ is a bounded domain with possible nonsmooth boundary and $\vec{h}$ is a vector field that satisfies \cref{h_hypothesis}. 

Weak solutions to \eqref{main} are in the space $u \in L^2(0,T; L^2(\Om)) \cap L^p(0,T; W_0^{1,p}(\Om))$ which allows one to use $u$ as a test function. But from the definition of weak solution, we see that the expression (see Definition \ref{very_weak_solution}) makes sense if we only assume $u \in L^2(0,T; L^2(\Om)) \cap L^{s}(0,T; W_0^{1,s}(\Om))$ for some $s > \max\{p-1,1\}$. But under this milder notion of solution called \emph{very weak solution}, we lose the ability to use $u$ as a test function. Subsequently, the technique of parabolic Lipschitz truncation was developed in the seminal paper of \cite{KL} where the authors could study certain properties of \emph{very weak solutions} in the range of $(p-\be,p)$ for all $\be \in (0,\be_0)$ for sufficiently small $\be_0$ depending only on the data. 

One of the main questions was to obtain existence of \emph{very weak solutions} in the range $(p-1,p)$. Though this problem is very formidable, nevertheless, there has been much progress achieved in the past few decades in understanding the question in the range $(p-\be_0,p)$ for all $\be \in (0,\be_0)$. Here $\be_0$ is some small universal constant depending only on the data. \emph{In this section, all the discussion will be restricted to the range $(p-\be_0,p)$ with the choice of $\be_0$ appropriately chosen in the respective papers pertaining to the discussion.}

The quasilinear elliptic analogue of the question considered in this paper was recently solved in \cite{bulivcek2016existence} under suitable assumptions on the nonlinearity $\aa(\cdot,\cdot)$ and the domain $\Om$. In order to prove existence, they made use of a compactness argument developed in \cite{bulivcek2016linearexistence} along with a priori estimates in suitable weighted spaces. The weighted estimates in the linear case were first proved in \cite{MR3318165} and using different techniques, a slightly weaker version was proved in \cite{tadele} and a slightly more weaker version was proved in \cite{bulivcek2016linearexistence}.  The weighted a  priori estimates in the quasilinear case considered in \cite{bulivcek2016existence} were obtained using the ideas that first originated in \cite{Mikkonen}. It should be noted that in \cite{adimurthi2016weight},  the required weighted a priori estimates in the elliptic case were also obtained, albeit with a stronger assumption on the nonlinear structure and a weaker assumption on the domain than those considered in \cite{bulivcek2016existence}. \emph{This is the approach we follow to obtain the a priori weighted estimates in this paper.}  The precursor to all these elliptic estimates is the seminal paper of \cite{johnlewis} which developed the method of Lipschitz truncation in the elliptic setting partially based on the ideas from \cite{acerbifusco}.

In the parabolic case, the problem was recently addressed in \cite{bulicek2018well} where the parabolic analogue of the compactness argument from \cite{bulivcek2016existence} for the linear case was developed. The weighted/unweighted estimates were first proved in \cite{MR3318165} for the linear case and a weaker version was reproved in \cite{bulicek2018well}. While the compactness arguments from \cite{bulicek2018well} were robust enough to work even in the quasilinear situation with relatively simple modifications, the a priori estimates developed in \cite{MR3318165,bulicek2018well} were specifically tailored to the linear case and hence could not be extended to handle quasilinear equations.

In this paper, we overcome this difficulty by obtaining the required a priori estimates using the ideas from \cite{adimurthi2018gradient} and combining them with several important observations regarding the weight class considered here. We then apply the modified compactness argument based on \cite{bulicek2018well} to prove the required existence result (see \cref{main1}). As discussed before, we consider the problem in the range $(p-\be_0,p)$ (see \cref{def_be_0} for the definition of $\be_0$) and along the way, impose some restrictions on the nonlinearity $\aa(\cdot,\cdot)$ and domain $\Om$. \emph{It should be noted that our restriction on the domain $\Om$ is less stringent whereas the restriction on the nonlinearity $\aa(\cdot,\cdot)$ is more than that assumed in the elliptic analogues from \cite{bulivcek2016existence}}.  One of the main tools used to obtain both the a priori estimates and the compactness argument is based on the parabolic Lipschitz truncation developed in the seminal paper of \cite{KL}.


The plan of the paper is as follows: In \cref{section2}, we collect some preliminary lemmas and assumptions and also describe the function spaces and Muckenhoupt weights, in \cref{section2.5}, we collect few more important lemmas which will be useful in proving the compactness arguments, in \cref{section3}, we describe the main theorems that will be proved, in \cref{section4}, we shall prove the main weighted a priori estimates that will be needed to prove the existence of \emph{very weak solutions}, in \cref{section6}, we shall adapt the compactness arguments from \cite{bulicek2018well} to our setting and finally in \cref{section7}, we shall apply the results from \cref{section4,section6} to prove the existence of \emph{very weak solutions} to \cref{main}.

\section{Preliminaries}
\label{section2}

The following restriction on the exponent $p$ will always be enforced:
\begin{equation}
 \label{restriction_p}
 \frac{2n}{n+2} < p < \infty.
\end{equation}

\begin{remark}The restriction in \eqref{restriction_p} is necessary when dealing with parabolic problems because,  we invariably have to deal with the $L^2$-norm of the solution which comes from the time-derivative. On the other hand, the following Sobolev  embedding $W^{1,p} \hookrightarrow L^2$ is true provided \eqref{restriction_p} holds. 
\end{remark}

\subsection{Assumptions on the Nonlinear structure}
\label{nonlinear_structure}
We shall now collect the assumptions on the nonlinear structure in \eqref{main}. 
We assume that $\aa(x,t,\nabla u)$ is a Carath\'eodory function, i.e., we have $(x,t) \mapsto \aa(x,t,\zeta)$  is measurable for every $\zeta \in \RR^n$ and 
$\zeta \mapsto \aa(x,t,\zeta)$ is continuous for almost every  $(x,t) \in \Om \times (0,T)$. We also assume $\aa(x,t,0) = 0$ and $\aa(x,t,\zeta)$ is differentiable in $\zeta$ away from the origin, i.e., $D_{\zeta}\aa(x,t,\zeta)$ exists for a.e. $(x,t) \in \RR^{n+1}$ and $\zeta \neq 0$.

We further assume that for a.e. $(x,t) \in \Om \times (0,T)$ and for any $\eta,\zeta \in \RR^n$, there exists two given positive constants $\lamot$ such that  the following bounds are satisfied   by the nonlinear structures:
\begin{equation}\label{abounded}\begin{array}{c}
\iprod{\aa(x,t,\zeta - \aa(x,t,\eta)}{\zeta - \eta}\geq  \La_0 \lbr |\zeta|^2 + |\eta|^2 \rbr^{\frac{p-2}{2}} |\zeta - \eta|^2, \\
|\aa(x,t,\zeta) - \aa(x,t,\eta)| \leq \La_1  |\zeta-\eta|\lbr |\zeta|^2 + |\eta|^2 \rbr^{\frac{p-2}{2}}. 
\end{array}
\end{equation}

Note that from the assumption $\aa(x,t,0) = 0$, we get for a.e. $(x,t) \in \RR^{n+1}$, there holds
\begin{equation*}
|\aa(x,t,\zeta)| \leq \La_1 |\zeta|^{p-1}.
\end{equation*}

\subsection{Structure of \texorpdfstring{$\Om$}.}
The domain that we consider may be non-smooth but should satisfy some regularity condition. This condition would essentially say that at each boundary point and every scale, we require the boundary of the domain to be between two hyperplanes separated by a distance  proportional to the scale.  

\begin{definition}
\label{reif_flat}
Given any $\ga \in (0,1/8]$ and $S_0 >0$, we say that $\Om$ is \gflt domain if for every $x_0 \in \pa \Om$ and every $r \in (0,S_0]$, there exists a system of coordinates $\{y_1,y_2,\ldots,y_n\}$ (possibly depending on $x_0$ and $r$) such that in this coordinate system, $x_0 =0$ and 
\[
B_r(0) \cap \{y_n > \ga r\} \subset B_r(0) \cap \Om \subset B_r(0) \cap \{y_n > -\ga r\}.
\]
\end{definition}

The class of Reifenberg flat domains is standard in obtaining Calder\'on-Zygmund type estimates, in the elliptic case, see \cite{AP2,BO1,BW-CPAM} and references therein whereas for the parabolic case, see \cite{Bui1,MR3461425,BOS1,MR2836359} and references therein. 

From the definition of \gflt domains, it is easy to see that the following property holds:
\begin{lemma}
\label{measure_density}
Suppose that $\Om$ is a \gflt domain, then there exists an $m_e = m_e(\ga,S_0,n)\in(0,1)$ such that for every $x \in {\Om}$ and every $r >0$, there holds
\begin{equation*}
|\Om^c \cap B_r(x)| \geq m_e |B_r(x)|.
\end{equation*}
\end{lemma}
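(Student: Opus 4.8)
The plan is to reduce the assertion to a single essential case --- a boundary point at a scale $\leq S_0$ --- and then read off the bound directly from the two-hyperplane sandwich in \cref{reif_flat}. (The inequality is of course only meaningful when $B_r(x)$ is not contained in $\Om$; we argue for $x \in \Om^c$, and the general assertion follows from the same connectedness observation used below whenever $B_r(x) \cap \Om^c \neq \emptyset$.)

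For the reduction, fix $x \in \Om^c$ and $r>0$ and set $d = \dist(x,\pa\Om)$. If $d \geq r/2$, then any segment joining $x$ to a point of $\Om$ must meet $\pa\Om$, so $B_{r/2}(x) \subset \Om^c$ and $|\Om^c \cap B_r(x)| \geq 2^{-n}|B_r(x)|$. If $d < r/2$, choose $x_0 \in \pa\Om$ with $|x-x_0| = d$; then $B_{r/2}(x_0) \subset B_r(x)$, and it suffices to bound $|\Om^c \cap B_\rho(x_0)|$ below by a fixed fraction of $|B_\rho(x_0)|$ with $\rho = \min\{r/2,S_0\}$, at the cost only of the harmless factor $2^{-n}$. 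For $r \leq 2S_0$ this completes the reduction; for $r > 2S_0$ one uses in addition that $\Om$ is bounded (for $r$ comparable to or larger than $\diam\Om$, $B_r(x)$ contains $\Om$ together with a ball of radius $\gtrsim r$, while for $2S_0 < r \lesssim \diam\Om$ the estimate at scale $S_0$ already gives a lower bound proportional to $|B_r(x)|$, the constant then also depending on the fixed quantity $\diam\Om$; in any case only scales $r \lesssim S_0$ enter the applications).

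It remains to produce $c = c(n) \in (0,1)$ with $|\Om^c \cap B_\rho(x_0)| \geq c\,|B_\rho(x_0)|$ for all $x_0 \in \pa\Om$ and $\rho \in (0,S_0]$. Applying \cref{reif_flat} at $x_0$ and radius $\rho$ yields coordinates in which $x_0 = 0$ and $B_\rho(0) \cap \Om \subset B_\rho(0) \cap \{y_n > -\ga\rho\}$, hence
\[
\Om^c \cap B_\rho(0) \ \supset\ B_\rho(0) \cap \{y_n \leq -\ga\rho\}.
\]
Rescaling $y = \rho z$ and using $\ga \leq 1/8$,
\[
|B_\rho(0) \cap \{y_n \leq -\ga\rho\}| = \rho^n\,|B_1(0) \cap \{z_n \leq -\ga\}| \ \geq\ \rho^n\,|B_1(0) \cap \{z_n \leq -\tfrac18\}|,
\]
and this spherical cap has strictly positive Lebesgue measure depending only on $n$. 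Dividing by $|B_\rho(0)| = \rho^n|B_1(0)|$ gives the claim with $c(n) = |B_1(0) \cap \{z_n \leq -\tfrac18\}| / |B_1(0)|$; combining with the reduction produces $m_e = m_e(\ga,S_0,n) \in (0,1)$.

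The only step requiring any care is the reduction to small scales at boundary points --- in particular the passage to $r > S_0$, where boundedness of $\Om$ is genuinely used; the geometric core, namely the spherical-cap estimate extracted from the sandwich in \cref{reif_flat}, is entirely elementary.
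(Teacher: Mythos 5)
The core of your argument --- extracting the spherical cap $B_\rho(x_0)\cap\{y_n\le-\ga\rho\}\subset\Om^c$ from the Reifenberg sandwich at a nearby boundary point, scaling, and handling large radii via boundedness --- is correct, and since the paper offers no proof beyond the remark that the lemma "is easy to see," this is exactly the expected argument.

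The point worth stressing, though, is the one you yourself flag in passing: the lemma as literally stated (for every $x\in\Om$ and every $r>0$) is false. Taking $\Om=B_1(0)$, which is $(\ga,S_0)$-Reifenberg flat for suitable $S_0$, together with $x=0$ and $r<1$, gives $|\Om^c\cap B_r(x)|=0$. Your proof runs for $x\in\Om^c$, hence in particular for $x\in\pa\Om$, which is the correct and usable version. But the parenthetical attempt to recover a statement for $x\in\Om$ "whenever $B_r(x)\cap\Om^c\ne\emptyset$" does not actually close the gap: if $x\in\Om$ lies at distance $d<r$ from $\pa\Om$ with $d$ near $r$, the nearest-boundary-point reduction only yields $|\Om^c\cap B_r(x)|\gtrsim\bigl((r-d)/r\bigr)^n|B_r(x)|$, a constant that degrades to zero as $d\to r$, and no argument can do better since the uniform bound genuinely fails in that regime. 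The clean resolution is to state and prove the lemma for $x\in\pa\Om$ (equivalently, as you do, for $x\in\Om^c$) --- which is all the paper needs --- and drop the parenthetical extension to interior points.
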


\subsection{Smallness Assumption}
In order to prove the main results, we need to assume a smallness condition satisfied by $(\aa,\Om)$.
\begin{definition}\label{definition_assumption}
We say $(\aa,\Om)$ is $(\ga,S_0)$-vanishing if the following assumptions hold:
\begin{description}[leftmargin=*]
  \item[(i) Assumption on $\aa$:] For any parabolic cylinder $Q_{\rho,s}(\mfz)$ centered at $\mfz := (\mfx,\mft)\in \RR^{n+1}$, let us define the following:
  \begin{equation*}
   \Theta(\aa,Q_{\rho,s}(\mfz))(x,t) := \sup_{\zeta \in \RR^n\setminus \{0\}} \frac{\abs{\aa(x,t,\zeta) - \aaa_{B_{\rho}(\mfx)}(t,\zeta)}}{|\zeta|^{p-1}},
  \end{equation*}
where we have used the notation 
\begin{equation*}
\aaa_{B_{\rho}(\mfx)}(t,\zeta) := \fint_{B_{\rho}(\mfx)} \aa(x,t,\zeta) \ dx.
\end{equation*}
Then $\aa$ is said to be $(\ga,S_0)$-vanishing if for some $\tau \in [1,\infty)$, there holds
\begin{equation}
 \label{small_aa}
 [\aa]_{\tau,S_0} := \sup_{\substack{0<\rho\leq S_0\\0<s\leq S_0^2}} \sup_{\substack{\mfz \in  \RR^{n+1}\\\mfz:=(\mfx,\mft)}} \fiint_{Q_{\rho,s}(\mfz)} \abs{\Theta(\aa,B_{\rho}(\mfx))(z)}^{\tau} \ dz \leq \ga^{\tau}.
\end{equation}
Here we have used the notation $z := (x,t) \subset \RR^{n+1}$ and $dz := dx \ dt$.

\item[(ii) Assumption on $\pa \Om$:] We require that $\Om$ is a \gflt  in the sense of  Definition \ref{reif_flat}. 

\item[(iii) Assumptions on $\vec{h}$:]
    \label{h_hypothesis}
    In what follows, we shall assume that for any $\be \in (0,\be_0)$ (where $\be_0$ is from \cref{def_be_0}), the vector field $\vec{h}$ satisfies the following hypothesis:
    \begin{enumerate}[(a)]
        \item\label{h_hyp_1}  There exists a function $h \in L^{p-\be}(0,T; W^{1,p-\be}(\Om))$ such that $\vec{h} = \nabla h$.
        \item\label{h_hyp_2}  For the function $h$ from \cref{h_hyp_1}, it satisfies $h_t \in L^\frac{p-\beta}{p-1}(0,T;W^{-1,\frac{p-\beta}{p-1}}(\Om))$. Furthermore, there exists a vector field $\vec{H}\in L^{p-\be}(\Om_T)$ such that 
        \[
            h_t = \dv |\vec{H}|^{p-2} \vec{H} \txt{in} L^\frac{p-\beta}{p-1}(0,T;W^{-1,\frac{p-\beta}{p-1}}(\Om)).
        \]
       \item\label{h_hyp_3} For the function $h$ from \cref{h_hyp_1}, there eixsts an approximating sequence $\{h_k\}_{k\ge1}\subset C^\infty(\Om_T)$ such that 
        \[
            \nabla h_k\to \nabla h = \vec{h}\txt{ in }L^{p-\beta}(\Om_T).
         \]
    \end{enumerate}

 \end{description}
\end{definition}
  
\begin{remark}
From \eqref{abounded}, we see that $|\Theta(\aa,Q_{\rho,s}(\mfz))(x,t)| \leq 2 \La_1$, thus combining this with the assumption \eqref{small_aa}, we see from standard interpolation inequality that for any $1 \leq \mathfrak{t} <\infty$, there holds
\[
\fiint_{Q_{\rho,s}(\mfz)} |\Theta(\aa,Q_{\rho,s}(\mfz))(z)|^{\mathfrak{t}} \ dz \leq C(\ga,\La_1),
\]
with $C(\ga, \La_1) \rightarrow 0$ whenever $\ga \rightarrow 0$. 
\end{remark}

\subsection{Some results about Maximal functions}

For any $f \in L^1(\RR^{n+1})$, let us now define the strong maximal function in $\RR^{n+1}$ as follows:
\begin{equation}
 \label{par_max}
 \mm(|f|)(x,t) := \sup_{\tQ \ni(x,t)} \fiint_{\tQ} |f(y,s)| \ dy \ ds,
\end{equation}
where the supremum is  taken over all parabolic cylinders $\tQ_{a,b}$ with $a,b \in \RR^+$ such that $(x,t)\in \tQ_{a,b}$. An application of the Hardy-Littlewood maximal theorem in $x-$ and $t-$ directions shows that the Hardy-Littlewood maximal theorem still holds for this type of maximal function (see \cite[Lemma 7.9]{liebermanbook} for details):
\begin{lemma}
\label{max_bound}
 If $f \in L^1(\RR^{n+1})$, then for any $\al >0 $, there holds
 \[
  |\{ z \in \RR^{n+1} : \mm(|f|)(z) > \al\}| \leq \frac{5^{n+2}}{\al} \|f\|_{L^1(\RR^{n+1})},
 \]
 and if $f \in L^{\vartheta}(\RR^{n+1})$ for some $1 < \vartheta \leq \infty$, then there holds
 \[
  \| \mm(|f|) \|_{L^{\vartheta}(\RR^{n+1})} \leq C_{(n,\vartheta)} \| f \|_{L^{\vartheta}(\RR^{n+1})}.
 \]

\end{lemma}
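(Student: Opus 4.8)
The plan is to follow the classical proof of the Hardy--Littlewood maximal theorem, adapted to the parabolic geometry. Two ingredients suffice: a Vitali/Wiener-type covering lemma for parabolic cylinders, which yields the weak-type $(1,1)$ estimate directly; and a Marcinkiewicz-type interpolation against the trivial $L^\infty$ bound, which upgrades it to the strong $L^\vartheta$ bound for $1<\vartheta<\infty$ (the case $\vartheta=\infty$ being immediate). For the $L^\vartheta$ estimate alone one could alternatively argue as the text suggests, by dominating $\mm(|f|)(x,t)$ pointwise by the one-dimensional Hardy--Littlewood maximal operator in $t$ applied to the $n$-dimensional Hardy--Littlewood maximal function in $x$ of $|f|$, and then invoking the classical $L^\vartheta$ theorem in each variable together with Fubini.

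For the covering lemma, note that every average appearing in the supremum satisfies $\fiint_{\tQ}|f|\le |\tQ|^{-1}\|f\|_{L^1(\RR^{n+1})}$, so if $\mm(|f|)(z)>\al$ then $z$ lies in a parabolic cylinder $\tQ^z$ with $\fiint_{\tQ^z}|f|>\al$ and $|\tQ^z|<\al^{-1}\|f\|_{L^1(\RR^{n+1})}$. Fix a compact set $K\subset E_\al:=\{z\in\RR^{n+1}:\mm(|f|)(z)>\al\}$; the family $\{\tQ^z\}_{z\in K}$ covers $K$ with uniformly bounded radii, so one may pass to a finite subcover and then, by the usual greedy selection (order the chosen cylinders by decreasing radius and retain one only if it is disjoint from all previously retained ones), extract a pairwise disjoint subfamily $\tQ^{z_1},\dots,\tQ^{z_m}$ with $K\subset\bigcup_{i=1}^m 5\,\tQ^{z_i}$; here one uses that a parabolic cylinder meeting another one of radius at least as large is contained in the $5$-dilate of the latter, and that dilating a parabolic cylinder by the factor $5$ multiplies its Lebesgue measure by $5^{n+2}$ (the homogeneous dimension of parabolic space being $n+2$). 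Therefore
\[
|K|\le\sum_{i=1}^m|5\,\tQ^{z_i}|=5^{n+2}\sum_{i=1}^m|\tQ^{z_i}|\le\frac{5^{n+2}}{\al}\sum_{i=1}^m\int_{\tQ^{z_i}}|f|\le\frac{5^{n+2}}{\al}\|f\|_{L^1(\RR^{n+1})},
\]
using disjointness in the last step, and taking the supremum over all compact $K\subset E_\al$ gives the first assertion.

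For the second assertion with $\vartheta=\infty$, each average in the supremum is at most $\|f\|_{L^\infty(\RR^{n+1})}$, so $\mm(|f|)\le\|f\|_{L^\infty(\RR^{n+1})}$ pointwise. For $1<\vartheta<\infty$, fix $\al>0$ and write $f=f\,\mathbf{1}_{\{|f|>\al/2\}}+f\,\mathbf{1}_{\{|f|\le\al/2\}}$; the second term has maximal function at most $\al/2$, so $E_\al\subset\{\mm(|f|\,\mathbf{1}_{\{|f|>\al/2\}})>\al/2\}$, and the weak-type bound applied to the first term gives $|E_\al|\le\frac{2\cdot 5^{n+2}}{\al}\int_{\{|f|>\al/2\}}|f|$. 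Inserting this into $\|\mm(|f|)\|_{L^\vartheta(\RR^{n+1})}^\vartheta=\vartheta\int_0^\infty\al^{\vartheta-1}|E_\al|\,d\al$ and interchanging the order of integration yields $\|\mm(|f|)\|_{L^\vartheta(\RR^{n+1})}\le C_{(n,\vartheta)}\|f\|_{L^\vartheta(\RR^{n+1})}$ with $C_{(n,\vartheta)}$ depending only on $n$ and $\vartheta$, as claimed.

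The only step that is not a verbatim copy of the Euclidean argument is the covering lemma, and the one point there requiring attention is that the family of parabolic cylinders over which the supremum defining $\mm$ is taken genuinely enjoys the engulfing property above, i.e.\ behaves like the family of balls of a doubling quasi-metric space of homogeneous dimension $n+2$; this is precisely what produces the constant $5^{n+2}$, and is the content of \cite[Lemma 7.9]{liebermanbook}.
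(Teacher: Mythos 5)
Your overall strategy --- Vitali covering for the weak $(1,1)$ bound, then Marcinkiewicz interpolation against the trivial $L^\infty$ bound for $1<\vartheta<\infty$ --- is the standard one, and you correctly single out at the end that the whole argument hinges on the family of cylinders over which the supremum in $\mm$ is taken enjoying the engulfing (quasi-ball) property. You do not, however, verify that property, and that is exactly where the proposal has a genuine gap.

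As the paper literally defines it in \cref{par_max}, the supremum runs over cylinders $\tQ_{a,b}=B_a\times(t-b,t+b)$ with the spatial radius $a$ and the temporal half-length $b$ allowed to vary \emph{independently}; the paper even calls $\mm$ the ``strong maximal function.'' For this two-parameter family the engulfing property fails: two overlapping cylinders with incomparable eccentricities, say $a_1>a_2$ but $b_1<b_2$, are contained in no fixed dilate of one another, and there is no single ``radius'' by which to order them in your greedy selection. (Your preliminary observation that each admissible cylinder has measure below $\alpha^{-1}\|f\|_{L^1}$ does not yield uniformly bounded radii either, since $a$ can be huge while $b$ is tiny.) This is not a reparable technicality: for the two-parameter strong maximal operator the weak $(1,1)$ estimate is genuinely false --- by the Jessen--Marcinkiewicz--Zygmund theory the correct borderline class is $L\log L$, not $L^1$, as one sees by testing against a shrinking approximate identity, whose super-level sets have measure growing logarithmically. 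By contrast, your second assertion does hold for the two-parameter operator, and the iterated argument you mention as an alternative (dominate $\mm$ pointwise by the one-variable maximal operator in $t$ applied to the $n$-variable one in $x$, then use the Euclidean theorem in each factor and Fubini) is exactly the argument the paper's remark alludes to; that route delivers $L^\vartheta$ boundedness for $1<\vartheta\le\infty$ but \emph{not} weak $(1,1)$.

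Everything in your covering argument becomes correct if the definition is read over genuinely parabolic cylinders, i.e.\ with $b$ tied to $a^2$, so that the $\tQ$'s are the quasi-balls of the parabolic metric of homogeneous dimension $n+2$. That reading is forced by the constant $5^{n+2}$ (dilating such a cylinder by $5$ multiplies its measure by $5^n\cdot 5^2$) and matches the cited Lemma 7.9 of Lieberman. So you should either make the parabolic constraint explicit --- in which case your Vitali proof goes through verbatim --- or acknowledge that for the two-parameter operator as written the first half of the lemma is not provable by this (or any) argument; the proposal as it stands slides past exactly the distinction between the two readings.
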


\subsection{Muckenhoupt weights}

In this subsection, let us collect all the properties of the weights that will be considered in the paper. See \cite[Chapter 9]{Grafakos} for the details concerning this subsection.
\begin{definition}[Strong Muckenhoupt Weight]
\label{muck_weight}
A non negative, locally integrable function $\om$ is a \emph{strong weight} in $A_q(\RR^{n+1})$ for some $1 < q < \infty$ if 
\begin{equation*}
\sup_{\mfz \in \RR^{n+1}}\sup_{\substack{0<\rho<\infty,\\0<s<\infty}} \lbr \fiint_{Q_{\rho,s}(\mfz)} \om(z) \ dz \rbr \lbr \fiint_{Q_{\rho,s}(\mfz)}\om^{\frac{-1}{q-1}}(z) \ dz \rbr^{q-1} =: [\om]_{q} < \infty.
\end{equation*}
In the case $q =1$, we define the \emph{strong $A_1(\RR^{n+1})$ weight} to be the class of non negative, locally integrable function $\om \in A_1(\RR^{n+1})$ satisfying
\begin{equation*}
\sup_{z \in \RR^{n+1}}\sup_{\substack{0<\rho<\infty,\\0<s<\infty}} \lbr \fiint_{Q_{\rho,s}(\mfz)} \om(z) \ dz \rbr \norm{\om^{-1}}_{L^{\infty}(Q_{\rho,s}(\mfz))} =: [\om]_{1} < \infty.
\end{equation*}
The quantity $[w]_q$ for $1\leq q < \infty$ will be called as the $A_q$ constant of the weight $\om$. 
\end{definition}

We will need the following important characterization of Muckenhoupt weights:
\begin{lemma}
\label{weight_lemma}
A parabolic weight $w \in A_q$ for $1<q<\infty$ if and only if 
\[
\lbr \frac{1}{|Q|} \iint_Q f(x,t)  \ dx \ dt \rbr^q \leq \frac{c}{w(Q)} \iint_Q |f(x,t)|^q w(x,t) \ dx \ dt,
\]
holds for all non-negative, locally integrable functions $f$ and all cylinders $Q = Q_{\rho,s}$.
\end{lemma}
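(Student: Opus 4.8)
The plan is to prove the two implications of the equivalence separately; each is a short consequence of Hölder's inequality combined with a judicious test function. Before starting, I would record the elementary observation that if the displayed inequality holds for all nonnegative locally integrable $f$, then $w>0$ a.e.: testing with $f=\chi_E$ for a set $E\subset Q$ of positive measure on which $w$ vanishes would force $(|E\cap Q|/|Q|)^q\le 0$. Hence $w^{-1/(q-1)}$ is well defined (as a $[0,\infty]$-valued function) throughout the argument.

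For the direction $w\in A_q\Rightarrow$ the inequality, I would start from
\[
\frac{1}{|Q|}\iint_Q f\,dx\,dt=\frac{1}{|Q|}\iint_Q \bigl(f\,w^{1/q}\bigr)\,w^{-1/q}\,dx\,dt
\]
and apply Hölder's inequality with conjugate exponents $q$ and $q'=\tfrac{q}{q-1}$, using $q'/q=\tfrac{1}{q-1}$, to get
\[
\frac{1}{|Q|}\iint_Q f\,dx\,dt\le \frac{1}{|Q|}\left(\iint_Q f^q w\,dx\,dt\right)^{1/q}\left(\iint_Q w^{-1/(q-1)}\,dx\,dt\right)^{(q-1)/q}.
\]
Raising to the power $q$, rewriting $\iint_Q w^{-1/(q-1)}\,dx\,dt=|Q|\,\fiint_Q w^{-1/(q-1)}$ and collecting the powers of $|Q|$ reduces the desired estimate to $\bigl(\fiint_Q w^{-1/(q-1)}\bigr)^{q-1}\bigl(\fiint_Q w\bigr)\le c$, which is exactly the bound $[w]_q\le c$ of Definition \ref{muck_weight}. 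So $c=[w]_q$ works.

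For the converse, I would test the hypothesis with the truncations $f_k:=\min\{w^{-1/(q-1)},k\}$, which are bounded, hence locally integrable. Since $f_k\le w^{-1/(q-1)}$ we have $f_k^{\,q-1}w\le 1$ pointwise, so $f_k^{\,q}w\le f_k$, and the right-hand side of the hypothesis is at most $\tfrac{c}{w(Q)}\iint_Q f_k\,dx\,dt$. Cancelling $\iint_Q f_k\,dx\,dt$ (which is positive since $w<\infty$ a.e.) gives
\[
\left(\frac{1}{|Q|}\iint_Q f_k\,dx\,dt\right)^{q-1}\le \frac{c|Q|}{w(Q)}=\frac{c}{\fiint_Q w}.
\]
Letting $k\to\infty$ and invoking monotone convergence yields $\bigl(\fiint_Q w^{-1/(q-1)}\bigr)^{q-1}\bigl(\fiint_Q w\bigr)\le c$ for every cylinder $Q=Q_{\rho,s}$; this is precisely $[w]_q\le c$, and in particular $w^{-1/(q-1)}$ is locally integrable, so $w$ is a genuine $A_q$ weight.

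The one point that needs care — and the step I would flag as the main obstacle, modest though it is — is the a priori non-integrability of $w^{-1/(q-1)}$ in the converse direction: it cannot simply be inserted into the hypothesis, which is why the truncation $f_k$ and the limiting argument are needed. Everything else is bookkeeping with Hölder's inequality and the scaling of the averages $\fiint$; no property of the cylinders beyond their Lebesgue measure enters, so the argument is insensitive to the aspect ratio of $Q_{\rho,s}$, exactly matching the strong $A_q$ condition of Definition \ref{muck_weight}.
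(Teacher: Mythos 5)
Your proof is correct and complete. The paper states Lemma~\ref{weight_lemma} without proof, treating it as a standard characterization of $A_q$ weights (it can be found, for instance, in~\cite{Grafakos}), and your argument is the canonical one: H\"older's inequality with the splitting $f=(fw^{1/q})w^{-1/q}$ for one direction, and testing with truncations of $w^{-1/(q-1)}$ followed by monotone convergence for the other. The two details you flag --- that the hypothesis forces $w>0$ a.e.\ (so $w^{-1/(q-1)}$ is meaningful), and that $w^{-1/(q-1)}$ is not a priori locally integrable so the truncation $f_k=\min\{w^{-1/(q-1)},k\}$ is genuinely needed --- are exactly the points that require care, and you handle both correctly.
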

%

As a direct consequence of Lemma \ref{weight_lemma}, the following  Lemma holds:

\begin{lemma}\label{weight_lemma2}
Let $\om \in A_q(\RR^{n+1})$ for some $1<q<\infty$, then there exists positive constants $c = c(n,q,[\om]_q)$ and $\tau = \tau(n,q,[\om]_q) \in (0,1)$ such that 
\begin{equation*}
\frac{1}{c}\lbr \frac{|E|}{|Q|} \rbr^q \leq  \frac{\om(E)}{\om(Q)} \leq c \lbr \frac{|E|}{|Q|} \rbr^{\tau},
\end{equation*}
for all $E \subset Q$ and all parabolic cylinders $Q_{\rho,s}$. 

\end{lemma}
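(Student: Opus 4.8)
The plan is to prove the two inequalities separately: the left one is an immediate consequence of \cref{weight_lemma}, and the right one is then bootstrapped from it by a Calder\'on--Zygmund iteration (or, alternatively, deduced from the reverse H\"older inequality for $A_q$ weights). For the lower bound, I would feed the non-negative function $f = \rchi_E$ into the characterization of \cref{weight_lemma} on the cylinder $Q = Q_{\rho,s}$: since $\frac{1}{|Q|}\iint_Q \rchi_E\,dx\,dt = \frac{|E|}{|Q|}$ and $\iint_Q \rchi_E^{\,q}\,\om\,dx\,dt = \iint_E \om = \om(E)$, \cref{weight_lemma} gives $\left(\frac{|E|}{|Q|}\right)^q \le \frac{c}{\om(Q)}\,\om(E)$, which is exactly $\frac1c\left(\frac{|E|}{|Q|}\right)^q \le \frac{\om(E)}{\om(Q)}$ with $c = c(n,q,[\om]_q)$; taking $E = Q$ also shows $c \ge 1$.

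For the upper bound I would first reduce to a doubling dichotomy. Applying the lower bound just obtained to the measurable set $Q \setminus E \subset Q$ yields $\frac{\om(Q\setminus E)}{\om(Q)} \ge \frac1c\left(1 - \frac{|E|}{|Q|}\right)^q$, hence
\[
\frac{\om(E)}{\om(Q)} = 1 - \frac{\om(Q\setminus E)}{\om(Q)} \le 1 - \frac1c\left(1 - \frac{|E|}{|Q|}\right)^q .
\]
Consequently, for each $\alpha \in (0,1)$ there is a constant $\beta(\alpha) := 1 - \frac1c(1-\alpha)^q \in (0,1)$ with the property that $|E| \le \alpha|Q|$ implies $\om(E) \le \beta(\alpha)\,\om(Q)$.

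Next I would iterate this dichotomy. Fix the dyadic refinement of the cylinders $Q_{\rho,s}$ associated to the parabolic scaling and let $D_0 = D_0(n)$ be the doubling constant of that grid (the volume ratio between a dyadic cylinder and its parent); then choose a threshold $\alpha_0 \in (0,1/D_0)$ small enough that $\alpha_0 < \beta_\ast := \beta(D_0\alpha_0)$. For $E \subset Q$, the Calder\'on--Zygmund stopping-time decomposition of $\rchi_E$ inside $Q$ at level $\alpha_0$ produces pairwise disjoint dyadic subcylinders $\{Q_j\}$ of $Q$ with $\alpha_0 < \frac{|E\cap Q_j|}{|Q_j|} \le D_0\alpha_0 < 1$ and $|E \setminus \bigcup_j Q_j| = 0$ (the latter by the Lebesgue differentiation theorem for parabolic cylinders). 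Applying the reduction step on each $Q_j$ with $\alpha = D_0\alpha_0$ and summing gives $\om(E) \le \beta_\ast\sum_j\om(Q_j) = \beta_\ast\,\om\left(\bigcup_j Q_j\right)$ together with $\left|\bigcup_j Q_j\right| \le \frac{|E|}{\alpha_0}$. Feeding this back into itself (an induction on $k$, with the reduction step as base case) gives $|E| \le \alpha_0^{\,k}|Q| \Rightarrow \om(E) \le \beta_\ast^{\,k}\om(Q)$ for all $k \ge 1$; choosing $k \sim \frac{\log(|Q|/|E|)}{\log(1/\alpha_0)}$ then yields $\frac{\om(E)}{\om(Q)} \le c\left(\frac{|E|}{|Q|}\right)^\tau$ with $\tau = \frac{\log(1/\beta_\ast)}{\log(1/\alpha_0)} \in (0,1)$ — the bound $\tau < 1$ being exactly the requirement $\alpha_0 < \beta_\ast$ — and $c = c(n,q,[\om]_q)$.

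Alternatively, the upper bound follows in one step from the reverse H\"older inequality valid for $A_q$ weights (\cite[Chapter~9]{Grafakos}): if $\left(\fiint_Q \om^{1+\eta}\right)^{\frac{1}{1+\eta}} \le c\fiint_Q \om$ for all cylinders $Q$, then H\"older's inequality with exponents $1+\eta$ and $\frac{1+\eta}{\eta}$ gives $\om(E) \le c\,\om(Q)\left(\frac{|E|}{|Q|}\right)^{\frac{\eta}{1+\eta}}$. Either way, the only genuinely substantive ingredient is this self-improvement step (equivalently, the reverse H\"older inequality); everything else is bookkeeping with \cref{weight_lemma} and H\"older's inequality. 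I expect the main technical nuisance to be organizing the dyadic grid with the correct anisotropic parabolic scaling, so that the stopping cylinders have a dimension-dependent doubling constant and exhaust $E$ up to a set of measure zero.
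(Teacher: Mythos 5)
The paper does not actually prove this lemma: it is stated as ``a direct consequence of Lemma~\ref{weight_lemma}'' with no argument, implicitly deferring to the standard theory of Muckenhoupt weights. Your proposal therefore fills a genuine gap, and it is essentially correct. The lower bound is exactly what Lemma~\ref{weight_lemma} gives with $f = \lsb{\chi}{E}$, and you handle it correctly. The upper bound is the $A_\infty$ property and does \emph{not} follow directly from Lemma~\ref{weight_lemma} alone; your two routes to it are both sound. The reverse H\"older route is the cleaner of the two and dovetails with the paper, which states the reverse H\"older inequality as Lemma~\ref{reverse_holder_weight} immediately afterward; in fact the $\tau$ appearing in the conclusion is just $\frac{\de}{1+\de}$ in that lemma's notation, which is automatically in $(0,1)$.

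On the Calder\'on--Zygmund route, the phrase ``feeding this back into itself'' hides the one step that actually makes the induction close, so let me spell it out: the inductive hypothesis (for level $k$) must be stated for \emph{arbitrary measurable} subsets $E' \subset Q'$ with $|E'| \leq \alpha_0^k |Q'|$, not merely for cylinders. Then, given $|E| \leq \alpha_0^{k+1}|Q|$, the stopping-time cylinders $\{Q_j\}$ give $\om(E) \leq \beta_* \om\lbr \bigcup_j Q_j\rbr$ by the dichotomy applied inside each $Q_j$, and the level-$k$ hypothesis is invoked with $E' = \bigcup_j Q_j$ (a measurable set, not a cylinder) and $Q' = Q$, using $\lv \bigcup_j Q_j\rv \leq |E|/\alpha_0 \leq \alpha_0^k|Q|$. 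Note that a naive re-decomposition of $E$ \emph{inside} each $Q_j$ would fail, since the density there is already $\approx \alpha_0$ and a further CZ stopping at level $\alpha_0$ returns $Q_j$ itself. Your formulation, read charitably, is doing the former, and it is correct; but it's worth making this distinction explicit, as it is the crux of why the iteration terminates. One minor bookkeeping note: you correctly observe that $\tau<1$ forces $\alpha_0 < \beta_*$, but this is not actually needed to satisfy the lemma's statement --- since $|E|/|Q| \leq 1$, any exponent $\tau'>0$ with $\tau' \leq \tau$ works equally well in the upper bound, so one can always shrink $\tau$ into $(0,1)$ afterward.
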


It is well known that the class of Muckenhoupt weights satisfy a reverse H\"older inequality (see for example \cite[Theorem 9.2.2]{Grafakos} for the details) given by
\begin{lemma}
    \label{reverse_holder_weight}
    Let $\om \in A_q$ for some $1\leq q < \infty$, then there exists constants $C=C(n,q,[w]_{A_q})$ and $\de = \de(n,q,[w]_{A_q})$ such that for every cube, there holds
    \[
        \lbr \frac{1}{|Q|} \iint_{Q} \om(z)^{1+\de} \ dz \rbr^{\frac{1}{1+\de}} \leq C \frac{1}{|Q|} \iint_{Q} \om(z) \ dz.
    \]
\end{lemma}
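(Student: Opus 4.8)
The plan is to deduce this self-improving integrability from the Calder\'on--Zygmund stopping-time technique combined with the $A_\infty$-type measure comparison already recorded in \cref{weight_lemma2}; this is the parabolic version of Gehring's lemma, so the essential analytic input is \cref{weight_lemma2} and the rest is organization. Fix a parabolic cylinder $Q_0$ and set $\la_0 := \frac{1}{|Q_0|}\iint_{Q_0}\om\,dz$. First, for each height $\la > \la_0$ I would perform the dyadic Calder\'on--Zygmund decomposition of $\om$ relative to $Q_0$ at level $\la$: this produces a countable pairwise disjoint family of dyadic subcubes $\{Q_j^\la\}_j$ of $Q_0$, maximal subject to $\frac{1}{|Q_j^\la|}\iint_{Q_j^\la}\om\,dz > \la$. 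By maximality, together with the fact that the dyadic parent of $Q_j^\la$ has Lebesgue measure a fixed dimensional multiple $\ka = \ka(n)$ of $|Q_j^\la|$, one gets $\la < \frac{1}{|Q_j^\la|}\iint_{Q_j^\la}\om\,dz \le \ka\la$, while $\om \le \la$ a.e.\ on $Q_0 \setminus \Om_\la$, where $\Om_\la := \bigcup_j Q_j^\la$; in particular $\{z \in Q_0 : \om(z) > \la\} \subset \Om_\la$ up to a null set.

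The step carrying the real content is a geometric decay estimate for the weighted measure of $\Om_\la$: there is a constant $a = a(n,q,[\om]_q) > 1$ such that $\om(\Om_{a\la}) \le \tfrac12 \om(\Om_\la)$ for every $\la > \la_0$. Since any dyadic cube with $\om$-average exceeding $a\la$ also has average exceeding $\la$, the cubes $\{Q_k^{a\la}\}$ are nested inside the cubes $\{Q_j^\la\}$, so it suffices to estimate inside a single $Q_j^\la$. Summing $|Q_k^{a\la}| < \frac{1}{a\la}\iint_{Q_k^{a\la}}\om\,dz$ over the $Q_k^{a\la} \subset Q_j^\la$ and using $\iint_{Q_j^\la}\om\,dz \le \ka\la\,|Q_j^\la|$ gives $|\Om_{a\la} \cap Q_j^\la| \le \frac{\ka}{a}|Q_j^\la|$; then \cref{weight_lemma2} yields $\om(\Om_{a\la} \cap Q_j^\la) \le c\,(\ka/a)^\tau\,\om(Q_j^\la)$ with $c,\tau \in (0,1)$ depending only on $n,q,[\om]_q$, and choosing $a := \ka(2c)^{1/\tau}$ makes the prefactor at most $\tfrac12$. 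Summing over the (disjoint) $Q_j^\la$ proves the claim, and iterating it gives $\om(\Om_{a^k\la_0}) \le 2^{-k}\om(\Om_{\la_0}) \le 2^{-k}\om(Q_0)$ for all $k \ge 0$.

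Finally I would integrate this decay against a small power. Splitting $\iint_{Q_0}\om^{1+\de}\,dz$ according to whether $\om \le \la_0$, and then decomposing $\{\om > \la_0\}$ into the dyadic layers $\{a^k\la_0 < \om \le a^{k+1}\la_0\}$, using $\om^\de \le (a^{k+1}\la_0)^\de$ on the $k$-th layer and $\{\om > a^k\la_0\} \subset \Om_{a^k\la_0}$, one obtains
\[
\iint_{Q_0}\om^{1+\de}\,dz \le \la_0^{1+\de}|Q_0| + \sum_{k\ge0}(a^{k+1}\la_0)^\de\,\om(\Om_{a^k\la_0}) \le \la_0^{1+\de}|Q_0|\lbr 1 + a^\de\sum_{k\ge0}\lbr\tfrac{a^\de}{2}\rbr^k\rbr,
\]
where I used $\om(Q_0) = \la_0|Q_0|$. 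Choosing $\de = \de(n,q,[\om]_q) > 0$ so small that $a^\de < 2$, the series converges to a finite constant $C = C(n,q,[\om]_q)$, and taking $(1+\de)$-th roots gives exactly the stated inequality; all the integral manipulations are legitimate by monotone convergence since every term on the right is finite. I do not expect a genuine obstacle beyond the decay estimate itself — that is where \cref{weight_lemma2} is used in an essential way and where one must correctly nest the two stopping-time families — the only other point of care being the standard passage from the pointwise super-level sets $\{\om>\la\}$ to the sets $\Om_\la$ via the Lebesgue differentiation theorem.
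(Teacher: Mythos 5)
The paper does not prove this lemma; it cites it as a classical fact (\cite[Theorem 9.2.2]{Grafakos}), so there is no in-paper argument to compare against. Your argument is the standard Calder\'on--Zygmund/Gehring stopping-time proof, and it is correct: the decomposition at the increasing levels $a^k\la_0$, the nesting of the two stopping families, the geometric decay $\om(\Om_{a\la}) \le \tfrac12\om(\Om_\la)$, and the final layer-cake integration with $\de$ small enough that $a^\de < 2$ all go through. Two cosmetic remarks: the decay estimate should be asserted for $\la \ge \la_0$ rather than $\la > \la_0$ so that the iteration can legitimately start at $\la_0$ (the Calder\'on--Zygmund decomposition is well-defined at that level), and the constant $c$ appearing in \cref{weight_lemma2} is $\ge 1$, not in $(0,1)$ as you wrote, though neither affects your choice of $a$.

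One substantive point worth flagging is a potential circularity of dependencies. Your decay step invokes the upper, power-type inequality of \cref{weight_lemma2}, namely $\om(E)/\om(Q) \le c\bigl(|E|/|Q|\bigr)^{\tau}$. In most developments, including Grafakos, that power-type $A_\infty$ estimate is itself a \emph{corollary} of the reverse H\"older inequality you are proving; only the lower inequality of \cref{weight_lemma2} is an immediate consequence of the $A_q$ characterization in \cref{weight_lemma}. Within this paper's ordering \cref{weight_lemma2} is stated first, so your proof is formally admissible here, but to make the argument genuinely self-contained it is cleaner to use only the qualitative $A_\infty$ condition, which truly does follow at once from \cref{weight_lemma}: applying it to $F := Q\setminus E$ gives $\bigl(|F|/|Q|\bigr)^q \le c_0\,\om(F)/\om(Q)$, so $|E| \le \alpha|Q|$ forces $\om(E) \le \bigl(1-(1-\alpha)^q/c_0\bigr)\om(Q) =: \beta\,\om(Q)$ with $\beta<1$. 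Choosing $a \ge \ka/\alpha$ then yields $\om(\Om_{a^k\la_0}) \le \beta^{k}\om(Q_0)$, and the rest of your argument runs unchanged once $\de$ is taken small enough that $a^{\de}\beta < 1$.
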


As a corollary, the following self-improvement property holds.
\begin{lemma}
\label{reverse_holder}
Let $1<q<\infty$ and suppose $\om \in A_q$ be a given weight, then there exists an $\ve_0 = \ve_0(n,q,[\om]_q)>0$ such that $\om \in A_{q-\ve_0}$ with the estimate $[\om]_{q-\ve_0} \leq C [\om]_q$ where $C = C{(q,n,[\om]_q)}$. 
\end{lemma}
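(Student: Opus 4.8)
The plan is to derive this ``openness'' (self-improvement) property of $A_q$ from the duality inside the Muckenhoupt classes together with the reverse H\"older inequality of \cref{reverse_holder_weight}. First I would record the standard fact that if $\om\in A_q$ then the dual weight $\sig := \om^{-\frac{1}{q-1}}$ belongs to $A_{q'}$, where $q' := \frac{q}{q-1}$, and moreover $[\sig]_{q'} = [\om]_q^{\frac{1}{q-1}}$; this is immediate from \cref{muck_weight} by interchanging the roles of the two averages and noting that $\sig^{-\frac{1}{q'-1}} = \om$. In particular $\sig$ is again a Muckenhoupt weight, so \cref{reverse_holder_weight} applied to $\sig$ (with exponent $q'$) produces $\de = \de(n,q,[\om]_q)>0$ and $C=C(n,q,[\om]_q)$ such that
\[
\lbr \fiint_Q \sig(z)^{1+\de}\,dz\rbr^{\frac{1}{1+\de}}\leq C\fiint_Q \sig(z)\,dz
\]
for every parabolic cylinder $Q = Q_{\rho,s}$.

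Next I would make the bookkeeping choice $\ve_0 := (q-1)\frac{\de}{1+\de}$, so that $1 < q-\ve_0 < q$ and, by construction, $\frac{1}{q-\ve_0-1} = \frac{1+\de}{q-1}$; consequently $\om^{-\frac{1}{q-\ve_0-1}} = \sig^{1+\de}$ pointwise. Inserting the reverse H\"older bound above then gives, for every cylinder $Q = Q_{\rho,s}$,
\[
\lbr \fiint_Q \om^{-\frac{1}{q-\ve_0-1}}\,dz\rbr^{q-\ve_0-1}=\lbr\lbr \fiint_Q \sig^{1+\de}\,dz\rbr^{\frac{1}{1+\de}}\rbr^{q-1}\leq C^{q-1}\lbr \fiint_Q \sig\,dz\rbr^{q-1}=C^{q-1}\lbr \fiint_Q \om^{-\frac{1}{q-1}}\,dz\rbr^{q-1}.
\]
Multiplying through by $\fiint_Q \om\,dz$ and taking the supremum over all $Q = Q_{\rho,s}$ yields $[\om]_{q-\ve_0}\leq C^{q-1}[\om]_q$, which is exactly the asserted estimate with final constant $C^{q-1} = C(q,n,[\om]_q)$.

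I do not expect a genuine obstacle here: once $\ve_0$ is fixed as above the computation is essentially forced. The one point I would state with care, rather than the exponent arithmetic, is the duality $\sig\in A_{q'}$ together with the fact that the self-improvement parameter $\de$ supplied by \cref{reverse_holder_weight} depends on $\sig$ only through $n$, $q'$ and $[\sig]_{q'}$, hence only through $n$, $q$ and $[\om]_q$ — this is precisely what makes both $\ve_0$ and the constant $C^{q-1}$ admissible in the sense claimed. One could of course also cite \cite[Chapter 9]{Grafakos} directly, but carrying out the argument as above keeps the dependence of all constants transparent for later use in the weighted estimates of \cref{section4}.
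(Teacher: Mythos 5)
Your proposal is correct and takes essentially the approach the paper intends: the lemma is stated as a corollary of \cref{reverse_holder_weight}, and the standard derivation is exactly the duality argument you give — pass to $\sig=\om^{-1/(q-1)}\in A_{q'}$ with $[\sig]_{q'}=[\om]_q^{1/(q-1)}$, apply the reverse H\"older inequality to $\sig$, and set $\ve_0=(q-1)\de/(1+\de)$ so that $\om^{-1/(q-\ve_0-1)}=\sig^{1+\de}$. The exponent arithmetic and the tracking of constant dependencies through $n,q,[\om]_q$ are all correct.
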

%

We will now define the $A_{\infty}$ class as follows:
\begin{definition}
\label{a_infinity}
A weight $\om \in A_{\infty}$ if and only if there are constants $\tau_0, \tau_1 >0$ such that for every  parabolic cylinder $Q=Q_{\rho,s} \subset \RR^{n+1}$ and every measurable $E \subset Q$, there holds
\[
\om(E) \leq \tau_0 \lbr \frac{|E|}{|Q|} \rbr^{\tau_1} \om(Q).
\]
Moreover, if $\om$ is an $A_q$ weight with $[\om]_q \leq \overline{\omega}$, then the constants $\tau_0$ and $\tau_1$ can be chosen such that $\max\{ \tau_0,1/\tau_1 \} \leq c(\overline{\om},n)$. 
\end{definition}

From the general theory of Muckenhoupt weights, we see that $A_{\infty} = \bigcup_{1 \leq q <\infty} A_q$. 

%
We now have the following important bounds for the Hardy Littlewood maximal function on weighted spaces (for example, see \cite[Chapter 9]{Grafakos} for more on this).
\begin{theorem}\label{weigted_lemma3} 
Let $1<q<\infty$ and suppose that $\om\in A_q(\mathbb{R}^{n+1})$ and let $\mathcal{O}$ be a bounded open subset of $\mathbb{R}^{n+1}$. Then for any $f\in L^q_\om(\mathcal{O})$, there holds
\[
\iint_{\mathcal{O}}\lv \mm(f)\rv^q\om\ dz\le C\iint_{\mathcal{O}}\lv f\rv^q\om\ dz,
\]
where $C=C(n,q,[\om]_{q})$.
\end{theorem}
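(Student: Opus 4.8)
The plan is to run the classical Muckenhoupt argument in the form: a weighted weak-type estimate at a slightly lower exponent, plus the trivial $L^\infty$ bound, plus Marcinkiewicz interpolation on the measure space $(\mathcal{O},\om\,dz)$. First, by the self-improvement property in Lemma~\ref{reverse_holder} there is $\ve_0 = \ve_0(n,q,[\om]_q) \in (0,q-1)$ such that $\om \in A_{q_0}$ with $q_0 := q - \ve_0$ and $[\om]_{q_0} \le C[\om]_q$. Since $\om$ is locally integrable and $\mathcal{O}$ is bounded, $\mu := \om\,dz$ is a finite measure on $\mathcal{O}$, so $L^q(\mathcal{O},\mu) \subset L^{q_0}(\mathcal{O},\mu)$; moreover $\mm$ is sublinear and $\|\mm(f)\|_{L^\infty} \le \|f\|_{L^\infty}$, hence $\mm$ is (strongly, so weakly) of type $(\infty,\infty)$ with respect to $\mu$.

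Second, I would establish the weighted weak-type $(q_0,q_0)$ bound: for every $\lambda>0$,
\[
\mu\gh{\mgh{z \in \mathcal{O} : \mm(f)(z) > \lambda}} \le \frac{C(n,q,[\om]_q)}{\lambda^{q_0}}\iint_{\mathcal{O}}\abs{f}^{q_0}\om\,dz ,
\]
where $f$ is extended by zero outside $\mathcal{O}$ (note $f \in L^{q_0}(\mu)$ is locally $dz$-integrable by H\"older, so $\mm(f)$ is well defined and lower semicontinuous, and $\mgh{\mm(f)>\lambda}$ is open). To prove it, fix a compact $K \subset \mgh{\mm(f) > \lambda}$; for each $z \in K$ pick a parabolic cylinder $\tQ_z \ni z$ with $\fiint_{\tQ_z}\abs{f} > \lambda$, and by a Vitali-type covering lemma for parabolic cylinders (as used in the proof of Lemma~\ref{max_bound}) select a finite pairwise disjoint subfamily $\tQ_1,\dots,\tQ_N$ with $K \subset \bigcup_{j}5\tQ_j$. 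The lower bound in Lemma~\ref{weight_lemma2}, applied with $E = \tQ_j \subset Q = 5\tQ_j$, gives the doubling estimate $\om(5\tQ_j) \le C\,\om(\tQ_j)$, while Lemma~\ref{weight_lemma} applied with exponent $q_0$ gives
\[
\lambda^{q_0} < \gh{\fiint_{\tQ_j}\abs{f}\,dz}^{q_0} \le \frac{c}{\om(\tQ_j)}\iint_{\tQ_j}\abs{f}^{q_0}\om\,dz ,
\]
hence $\om(\tQ_j) \le c\,\lambda^{-q_0}\iint_{\tQ_j}\abs{f}^{q_0}\om\,dz$. Summing over the disjoint $\tQ_j$ and using $f\equiv0$ off $\mathcal{O}$,
\[
\mu(K) \le \sum_{j}\om(5\tQ_j) \le C\sum_{j}\om(\tQ_j) \le \frac{C}{\lambda^{q_0}}\sum_{j}\iint_{\tQ_j}\abs{f}^{q_0}\om\,dz \le \frac{C}{\lambda^{q_0}}\iint_{\mathcal{O}}\abs{f}^{q_0}\om\,dz ,
\]
and taking the supremum over all such compact $K$ (inner regularity of $\mu$) proves the weak-type bound.

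Finally, $\mm$ is sublinear and of weak types $(q_0,q_0)$ and $(\infty,\infty)$ with respect to the finite measure $\mu = \om\,dz$, so the Marcinkiewicz interpolation theorem shows that $\mm$ is of strong type $(q,q)$ with respect to $\mu$ for every $q \in (q_0,\infty)$, with constant depending only on $q_0$, $q$ and the two endpoint constants, i.e. only on $n,q,[\om]_q$; this is exactly $\iint_{\mathcal{O}}\abs{\mm(f)}^q\om\,dz \le C\iint_{\mathcal{O}}\abs{f}^q\om\,dz$. The step needing the most care is the weak-type estimate: one must control the covering geometry of the (anisotropic) parabolic cylinders $Q_{\rho,s}$ and, above all, the doubling comparison $\om(5\tQ_j) \approx \om(\tQ_j)$ — it is precisely here that the \emph{strong} $A_q$ hypothesis (scale invariance over all cylinders with $\rho$ and $s$ independent), together with Lemma~\ref{weight_lemma2}, is indispensable, while the classical isotropic $A_q$ condition would not suffice.
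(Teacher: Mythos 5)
The overall plan (self-improvement to $A_{q_0}$, weighted weak-type $(q_0,q_0)$, Marcinkiewicz interpolation against the trivial $L^\infty$ bound) is the classical Muckenhoupt argument, and the two endpoint steps and the interpolation step are fine as far as they go. The paper itself gives no proof, only a pointer to \cite[Chapter 9]{Grafakos}, so there is no ``paper's proof'' to compare with; but your weak-type step contains a genuine gap.

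The operator $\mm$ in \eqref{par_max} is the \emph{strong} parabolic maximal function: the supremum is over all cylinders $\tQ_{a,b}=B_a(x)\times I_b(t)$ with $a,b>0$ \emph{independent}. This two-parameter family has unbounded space--time aspect ratios, and the Vitali $5r$ selection you invoke fails for such a family. Concretely, if $\tQ_i\cap\tQ_j\neq\emptyset$ and $|\tQ_i|\geq|\tQ_j|$, it is in general false that $5\tQ_i\supset\tQ_j$: take $\tQ_i$ spatially wide and temporally thin and $\tQ_j$ spatially thin and temporally long with the same measure, sharing a single point — no fixed dilate of $\tQ_i$ absorbs $\tQ_j$. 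So the finite disjoint subfamily with $K\subset\bigcup_j 5\tQ_j$ does not exist, and the chain $\mu(K)\le\sum_j\om(5\tQ_j)\le\dots$ never gets started. You also attribute a covering lemma to the proof of \cref{max_bound}, but the paper states explicitly that \cref{max_bound} is obtained by \emph{iterating} the one-dimensional Hardy--Littlewood theorem in the $x$- and $t$-directions, not by covering; this is not an accident, since for the strong maximal function there is no weak $(1,1)$ estimate at all, and the covering route is structurally unavailable. Your closing remark correctly senses that the strong $A_q$ hypothesis is essential, but misdiagnoses where: the doubling $\om(5\tQ)\apprle\om(\tQ)$ is fine (it does follow from \cref{weight_lemma2}); what fails is the geometry of the cover, which no $A_q$ assumption can repair.

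The standard way to close this gap — and the one consistent with everything else in \cref{section2} — is iteration rather than covering. From the strong $A_q$ condition (\cref{muck_weight}), one shows that $\om(\cdot,t)\in A_q(\mathbb{R}^n)$ uniformly in $t$ and $\om(x,\cdot)\in A_q(\mathbb{R})$ uniformly in $x$, with constants controlled by $[\om]_q$; then, since for any cylinder $B_a\times I_b\ni(x,t)$ one has $\fiint_{B_a\times I_b}|f|\le M_x\bigl(M_t f\bigr)(x,t)$, the pointwise bound $\mm f\le M_x(M_t f)$ holds, and two applications of the one-variable weighted maximal theorem (Fubini in between) give the strong $L^q_\om$ estimate directly, with no need for a weak-type step or interpolation. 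This is the argument of Garc\'{\i}a-Cuerva and Rubio de Francia for the strong maximal function with a strong $A_q$ weight. If you wish to keep a weak-type/interpolation structure, you would still have to obtain the weak $(q_0,q_0)$ bound by this iteration (the weak type then follows a fortiori from the strong type), not by a Vitali cover.
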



\subsection{Function Spaces}\label{function_spaces}

Let $1\leq \vt < \infty$, then $W_0^{1,\vt}(\Om)$ denotes the standard Sobolev space which is the completion of $C_c^{\infty}(\Om)$ under the $\|\cdot\|_{W^{1,\vt}}$ norm. 

The parabolic space $L^{\vt}(0,T; W^{1,\vt}(\Om))$  is the collection of measurable functions $\phi(x,t)$ such that for almost every $t \in (0,T)$, the function $x \mapsto \phi(x,t)$ belongs to $W^{1,\vt}(\Om)$ with the following norm being finite:
\[
 \| \phi\|_{L^{\vt}(0,T; W^{1,\vt}(\Om)} := \lbr \int_{0}^T \| \phi(\cdot, t) \|_{W^{1,\vt}(\Om)}^{\vt} \ dt \rbr^{\frac{1}{\vt}} < \infty.
\]

Analogously, the parabolic space $L^{\vt}(0,T; W_0^{1,\vt}(\Om))$ is the collection of measurable functions $\phi(x,t)$ such that for almost every $t \in (0,T)$, the function $x \mapsto \phi(x,t)$ belongs to $W_0^{1,\vt}(\Om)$.

Given a weight $\om \in A_{\vt}$, the weighted Lebesgue space $L^{\vt}(0,T; L^{\vt}_{\om}(\Om))$ is the set of all measurable functions $\phi: \Om_T \mapsto \RR$ satisfying 
\[
 \int_{0}^T \lbr \int_{\Om} |\phi(x,t)|^{\vt} \om(x,t)\  dx \rbr  dt < \infty.
\]

 Let us recall the following important characterization of Lebesgue spaces:
\begin{lemma}
\label{weight_level_set}
Let $\Om$ be a bounded domain in $\RR^{n}$ and let $w \in L^1(\Om_T)$ be any non-negative function, then for all $\be > \al > 1$ and any non-negative measurable function $g(x,t): \Om_T \mapsto \RR$, there holds
\[
\iint_{\Om_T} g^{\be} w(z) \ dz = \be \int_0^{\infty} \la^{\be -1} w(\{ z \in \Om_T: g(z) > \la\}) \ d\la = (\be -\al) \int_{0}^{\infty} \la^{\be-\al-1} \lbr \iint_{\{z \in \Om_T: g(z) > \la\}} g^{\al} w(z) \ dz \rbr \ d\la.
\]

\end{lemma}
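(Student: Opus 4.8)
The plan is to prove both identities by the classical layer-cake (Cavalieri) representation combined with Tonelli's theorem, noting that every quantity appearing is non-negative and measurable, so the interchange of integrals is always legitimate and the identities hold in $[0,\infty]$.

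First I would establish the first equality. For any exponent $\be>0$ and any $z\in\Om_T$, the elementary identity $g(z)^{\be}=\be\int_0^{g(z)}\la^{\be-1}\,d\la=\be\int_0^{\infty}\la^{\be-1}\mathbf{1}_{\{g(z)>\la\}}\,d\la$ holds. Multiplying by $w(z)\geq0$ and integrating over $\Om_T$, the map $(z,\la)\mapsto \la^{\be-1}\mathbf{1}_{\{g(z)>\la\}}w(z)$ is non-negative and jointly measurable on $\Om_T\times(0,\infty)$ — measurability of the set $\{(z,\la):g(z)>\la\}$ follows from measurability of $g$ — so Tonelli's theorem permits exchanging the order of integration:
\[
\iint_{\Om_T} g^{\be}w(z)\,dz=\be\int_0^{\infty}\la^{\be-1}\left(\iint_{\Om_T}\mathbf{1}_{\{g>\la\}}w(z)\,dz\right)d\la=\be\int_0^{\infty}\la^{\be-1}\,w\big(\{z\in\Om_T:g(z)>\la\}\big)\,d\la.
\]

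For the second equality, I would instead split $g^{\be}=g^{\al}\cdot g^{\be-\al}$ and apply the same representation to the factor $g^{\be-\al}$, which is legitimate since $\be-\al>0$ by hypothesis: $g(z)^{\be-\al}=(\be-\al)\int_0^{\infty}\la^{\be-\al-1}\mathbf{1}_{\{g(z)>\la\}}\,d\la$. Substituting and again invoking Tonelli's theorem for the non-negative, jointly measurable integrand $(z,\la)\mapsto\la^{\be-\al-1}\mathbf{1}_{\{g(z)>\la\}}g(z)^{\al}w(z)$ yields
\[
\iint_{\Om_T} g^{\be}w(z)\,dz=(\be-\al)\int_0^{\infty}\la^{\be-\al-1}\left(\iint_{\{z\in\Om_T:g(z)>\la\}} g^{\al}w(z)\,dz\right)d\la.
\]

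There is essentially no obstacle here: the sole point requiring care is the justification of interchanging the two integrations, which is a direct application of Tonelli's theorem because the integrands are non-negative and measurable. The hypotheses $w\in L^1(\Om_T)$ and $\Om$ bounded are not actually needed for the identities themselves, while the assumption $\be>\al>1$ serves only to ensure that the exponents $\be-1$ and $\be-\al-1$ weighting $\la$ are positive (so the representation formulas above are valid). In particular, both sides of each identity are simultaneously finite or infinite.
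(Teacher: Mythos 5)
Your proof is correct and is the standard layer-cake/Tonelli argument; the paper states this lemma without proof as a recalled fact, so there is no in-paper proof to compare against. One small remark: your closing parenthetical claims the hypothesis $\be>\al>1$ "ensures that the exponents $\be-1$ and $\be-\al-1$ weighting $\la$ are positive," but $\be-\al-1$ is \emph{not} guaranteed positive under $\be>\al>1$ (take $\be=2$, $\al=3/2$). Fortunately your argument never needs that: the representation $g^{\ga}=\ga\int_0^{g}\la^{\ga-1}\,d\la$ is valid for every $\ga>0$ because $\la^{\ga-1}$ is locally integrable at the origin, so only $\be>0$ and $\be-\al>0$ are genuinely used, exactly as the body of your proof implicitly assumes.
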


\subsection{Very weak solution}

There is a well known difficulty in defining the notion of solution for \eqref{main} due to a lack of time derivative of $u$. To overcome this, one can either use Steklov average or convolution in time. In this paper, we shall use the former approach (see also \cite[Page 20, Equation (2.5)]{DiB1} for further details).

Let us first define Steklov average as follows: let $h \in (0, T)$ be any positive number, then we define
\begin{equation*}
  u_{h}(\cdot,t) := \left\{ \begin{array}{ll}
                              \hint_t^{t+h} u(\cdot, \tau) \ d\tau \quad & t\in (0,T-h), \\
                              0 & \text{else}.
                             \end{array}\right.
 \end{equation*}

\begin{definition}[Very weak solution] 
\label{very_weak_solution}
 
 Let $ \be \in (0,1)$ and $h \in (0,T)$ be given and suppose $p-\be > 1$. We  then say $u \in L^2(0,T; L^2(\Om)) \cap L^{p-\be}(0,T; W_0^{1,p-\be}(\Om))$ is a very weak solution of \eqref{main} if for any $\phi \in W_0^{1,\frac{p-\be}{1-\be}}(\Om) \cap L^{\infty}(\Om)$, the following holds:
 \begin{equation}
 \label{def_weak_solution}
  \int_{\Om \times \{t\}} \frac{d [u]_{h}}{dt} \phi + \iprod{[\aa(x,t,\nabla u)]_{h}}{\nabla \phi} \ dx = 0 \txt{for almost every}0 < t < T-h.
 \end{equation}

\end{definition}

\subsection{Notation}

\section{Some well known Lemmas}
\label{section2.5}

Let us now recall a well known compactness lemma proved in \cite{simons1987compact}.
\begin{theorem}\label{Aubin_Lions}
Let $X,B$ and $Y$ are Banach spaces such that 
\[
X\subset B\subset Y\text{ with compact embedding }X\to B.
\]
Let $1 \leq p < \infty$ be fixed,  $F$ be a bounded subset in $L^p(0,T;X)$ and $\pa_tF=\{\pa_tf\mid f\in F\}$  be bounded in $L^1(0,T;Y)$. Then $F$ is relatively compact in $L^p(0,T;B)$.
\end{theorem}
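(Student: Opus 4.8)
The plan is to deduce this from the classical Riesz--Fr\'echet--Kolmogorov characterisation of relative compactness in the vector-valued Lebesgue space $L^p(0,T;B)$, using the compact embedding $X\hookrightarrow B$ to control the spatial variable and the $L^1(0,T;Y)$ bound on the time derivatives to control translations in time. Two preliminary ingredients are needed. First, an Ehrling-type interpolation inequality: for every $\eta>0$ there is a constant $C_\eta>0$ such that
\[
\|v\|_B\le \eta\|v\|_X+C_\eta\|v\|_Y\qquad\text{for all }v\in X.
\]
This I would prove by contradiction: if it failed for some fixed $\eta$, there would exist $v_k\in X$ with $\|v_k\|_B=1$, $\eta\|v_k\|_X<1$ and $\|v_k\|_Y<1/k$; the bound in $X$ together with the compactness of $X\hookrightarrow B$ yields a subsequence converging in $B$, hence in $Y$, to some $v$ with $\|v\|_B=1$, while $v_k\to0$ in $Y$ forces $v=0$, a contradiction.

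Second, a uniform modulus of continuity for translations in time, measured in $Y$. Since each $f\in F$ lies in $L^p(0,T;X)\hookrightarrow L^1(0,T;Y)$ and has distributional derivative $\partial_t f\in L^1(0,T;Y)$, it agrees a.e.\ with an absolutely continuous $Y$-valued function, so for $0<h<T$ and a.e.\ $t\in(0,T-h)$,
\[
\|f(t+h)-f(t)\|_Y\le\int_t^{t+h}\|\partial_s f(s)\|_Y\,ds.
\]
The right-hand side is the restriction to $(0,T-h)$ of the convolution $\mathbf 1_{[-h,0]}*\psi_f$, where $\psi_f(s):=\|\partial_s f(s)\|_Y$ extended by $0$ outside $(0,T)$, so Young's convolution inequality gives
\[
\|\,f(\cdot+h)-f(\cdot)\,\|_{L^p(0,T-h;Y)}\le\|\mathbf 1_{[-h,0]}\|_{L^p(\mathbb R)}\,\|\psi_f\|_{L^1(\mathbb R)}=h^{1/p}\,\|\partial_t f\|_{L^1(0,T;Y)}\le M h^{1/p},
\]
with $M$ a uniform bound for $\partial_t F$ in $L^1(0,T;Y)$. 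The key point is that an $L^1$-in-time bound on the derivative already produces the quantitative rate $h^{1/p}\to0$, because $\|\mathbf 1_{[0,h]}\|_{L^p}=h^{1/p}$.

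With these in hand I would verify the two conditions of the Riesz--Fr\'echet--Kolmogorov criterion for $F\subset L^p(0,T;B)$, $1\le p<\infty$. For the first, given $0<t_1<t_2<T$, H\"older's inequality gives $\|\int_{t_1}^{t_2}f(s)\,ds\|_X\le(t_2-t_1)^{1-1/p}M_X$ with $M_X:=\sup_{g\in F}\|g\|_{L^p(0,T;X)}$, so the set $\{\int_{t_1}^{t_2}f\,ds:f\in F\}$ is bounded in $X$ and hence relatively compact in $B$ by the compact embedding. For the second, applying the Ehrling inequality to $v=f(t+h)-f(t)$ pointwise in $t$ and then taking $L^p(0,T-h)$ norms (Minkowski), together with $\|f(\cdot+h)-f(\cdot)\|_{L^p(0,T-h;X)}\le 2M_X$ and the translation estimate above,
\[
\|\,f(\cdot+h)-f(\cdot)\,\|_{L^p(0,T-h;B)}\le 2\eta M_X+C_\eta M h^{1/p}\qquad\text{uniformly over }f\in F;
\]
given $\varepsilon>0$, choose $\eta$ so that $2\eta M_X<\varepsilon/2$ and then $h$ so small that $C_\eta M h^{1/p}<\varepsilon/2$. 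These two conditions yield the relative compactness of $F$ in $L^p(0,T;B)$.

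I expect the main obstacle to be technical rather than conceptual: either invoking the vector-valued Riesz--Fr\'echet--Kolmogorov criterion cleanly — if one prefers to avoid it, one can instead regularise each $f$ by convolution in time, use Arzel\`a--Ascoli to get relative compactness of the regularised family in $C([0,T];B)$, and show the regularisations approximate $F$ uniformly in $L^p(0,T;B)$ via the same Ehrling and translation estimates, at the cost of handling mollification near the endpoints of $(0,T)$ — and justifying the fundamental theorem of calculus in the Bochner sense from $\partial_t f\in L^1(0,T;Y)$. The one genuinely substantive idea is the combination, inside the translation estimate, of the compact embedding (through Ehrling) with the observation $\|\mathbf 1_{[0,h]}\|_{L^p}=h^{1/p}$, which is precisely what allows the merely $L^1$-in-time control of the time derivative to suffice.
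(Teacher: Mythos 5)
Your argument is correct, and it is exactly the proof in the cited reference (the paper itself states this as a recalled result from Simon~1987 without reproducing a proof): you invoke Simon's Riesz--Fr\'echet--Kolmogorov-type criterion for relative compactness in $L^p(0,T;B)$, verify its time-mean condition via the compact embedding $X\hookrightarrow B$, and verify the equicontinuity-of-translations condition by combining Ehrling's lemma with the $L^1$-in-time bound on $\partial_t f$, the Young-inequality observation $\|\mathbf 1_{[0,h]}\|_{L^p}=h^{1/p}$ being the key quantitative point. The only hypothesis you use that is left implicit in the statement (and is implicit in Simon as well) is that the inclusion $B\subset Y$ is continuous, which is needed both in the Ehrling contradiction argument and to make $f\in L^1(0,T;Y)$; this is part of the standard hypotheses of the result.
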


Let us recall an important version of Chacon's biting lemma proved in \cite{ball1989remarks}.
\begin{lemma}\label{l1_compact}
Let $\mathcal{O}\subset \mathbb{R}^{n+1}$ be a bounded domain and $\{f^k\}_{k\ge1}$ be a bounded sequence in $L^1(\mathcal{O})$. Then there exists a non-decreasing sequence of measurable subsets $F_j\subset\mathcal{O}$ with $\left\lv\mathcal{O}\setminus F_j\right\rv\to0$ as $j\to\infty$ such that $\{f^k\}_{k\ge1}$ is pre-compact in $L^1(F_j)$ for each $j\ge1$.
\end{lemma}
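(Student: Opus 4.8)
This is the biting lemma of Brooks and Chacon, in the form recorded in \cite{ball1989remarks}, and the plan is to reproduce that argument. I would read ``pre-compact in $L^1(F_j)$'' in the weak topology, which by the Dunford--Pettis theorem (valid since $F_j$ has finite measure) means that $\{f^k\mathbf{1}_{F_j}\}_k$ is bounded and uniformly integrable; since this property is unchanged on replacing $f^k$ by $|f^k|$, I would first reduce to the case $f^k\ge 0$ by splitting into positive and negative parts, set $M:=\sup_k\|f^k\|_{L^1(\mathcal{O})}<\infty$, and pass to a subsequence (not relabeled) at the outset --- the latter being unavoidable, as for the full sequence the conclusion can fail (e.g.\ for $f^k=k\mathbf{1}_{A_k}$ with $|A_k|=1/k$ but the sets $A_k$ spread uniformly across $\mathcal{O}$).

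The first step would be to read off the ``tail profile''. Since $R\mapsto\int_{\{f^k>R\}}f^k\,dz$ is non-increasing and bounded by $M$, a preliminary subsequence makes it converge, for every $R$ outside a countable set, to a non-increasing function $\beta(R)$; writing $\int_{\{f^k>R\}}f^k\,dz=\int_{\mathcal{O}}(f^k-R)_+\,dz+R\,|\{f^k>R\}|$ and using that the distribution functions $m\mapsto|\{f^k>m\}|$ are summable over $m\in\NN$ uniformly in $k$ (so that $R\,|\{f^k>R\}|\to 0$ along the subsequence), one sees that $\beta(R)$ decreases as $R\to\infty$ to a number $\delta\ge 0$, the ``concentration defect''. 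If $\delta=0$ the subsequence is already uniformly integrable on $\mathcal{O}$ and one may take $F_j=\mathcal{O}$; the substance of the lemma is the case $\delta>0$.

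Next I would construct the sets inductively. I would put $m_j:=j^2$ and choose $k_j>k_{j-1}$ large enough --- possible because the relevant tail integrals converge along the subsequence --- that $\int_{\{f^{k_j}>R\}}f^{k_j}\,dz\le\beta(R)+2^{-j}$ for $R=1,\dots,j$ and $\int_{\{f^{k_j}>m_j\}}f^{k_j}\,dz\ge\delta-2^{-j}$. Setting $A_j:=\{f^{k_j}>m_j\}$, Chebyshev gives $|A_j|\le M/j^2$, so the sets $F_j:=\mathcal{O}\setminus\bigcup_{l\ge j}A_l$ are non-decreasing with $|\mathcal{O}\setminus F_j|\le M\sum_{l\ge j}l^{-2}\to 0$. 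For fixed $j$ and $l\ge j$ one has $F_j\subseteq\{f^{k_l}\le m_l\}$, so when $R<m_l$ the estimate $\int_{\{f^{k_l}>R\}\cap F_j}f^{k_l}\,dz\le\int_{\{f^{k_l}>R\}}f^{k_l}\,dz-\int_{\{f^{k_l}>m_l\}}f^{k_l}\,dz\le\big(\beta(\min(R,l))-\delta\big)+2^{1-l}$ holds, while the intersection is empty when $R\ge m_l$; combining this with the finitely many terms $l<j$ (each $f^{k_l}$ being individually uniformly integrable), a routine splitting of the indices $l$ together with $\beta(R)\downarrow\delta$ shows $\sup_l\int_{\{f^{k_l}>R\}\cap F_j}f^{k_l}\,dz\to 0$ as $R\to\infty$, i.e.\ $\{f^{k_l}\mathbf{1}_{F_j}\}_l$ is uniformly integrable. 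By Dunford--Pettis this is the asserted pre-compactness in $L^1(F_j)$.

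I expect the main obstacle to be exactly the bookkeeping in the previous step: one must make the tail integrals of the newly selected function $f^{k_j}$ simultaneously close to the limiting profile $\beta$ at the finitely many thresholds $R\le j$, which forces committing to $m_j$ before selecting $k_j$, and it is here that the subsequence extraction is needed; the monotonicity of $\beta$ and the decay $\beta(R)\to\delta$ are what make the telescoping estimate close. The remaining ingredients --- the reduction to $f^k\ge 0$, the Helly-type selection of $\beta$, and the Dunford--Pettis step --- are routine, and the fully detailed argument can be found in \cite{ball1989remarks}.
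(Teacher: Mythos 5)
The paper does not prove this lemma at all --- it simply cites \cite{ball1989remarks} --- so there is no in-paper proof to compare against, and the relevant comparison is with the Ball--Murat argument, which your reconstruction faithfully reproduces: Helly selection of the tail profile $\beta$, the ``biting'' of the sets $A_j=\{f^{k_j}>m_j\}$ with $m_j$ growing fast enough that $\sum_j|A_j|<\infty$, and a telescoping estimate that yields uniform integrability on each $F_j$, followed by Dunford--Pettis. Your bookkeeping in the last step is correct (with the cosmetic adjustment that $\beta(\min(R,l))$ should read $\beta(\min(\lfloor R\rfloor,l))$, and $m_j$ should be shifted to avoid the countable exceptional set of the Helly limit). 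You are also right to flag that passing to a subsequence is unavoidable and is indeed part of the Ball--Murat statement; the paper's formulation omits this clause, but the application in the proof of \cref{claim7.2} tacitly allows it (``taking subsequence if necessary''). One small redundancy: the decomposition $\int_{\{f^k>R\}}f^k=\int_{\mathcal{O}}(f^k-R)_+ + R|\{f^k>R\}|$ is not needed to conclude that $\beta(R)\downarrow\delta\ge 0$; that follows at once from $\beta$ being non-negative, non-increasing and bounded.
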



Before we conclude this subsection, let us now recall the well known Poincar\'e's inequality (see \cite[Corollary 8.2.7]{AH} for the proof):
\begin{theorem}
\label{poincare}
Let $1\leq \vt < \infty$,  and $f \in W^{1,\vt}(\tilde{\Om})$ for some bounded domain $\tilde{\Om}$ and suppose that the following measure density condition holds:
\[\abs{\{ x \in \tilde{\Om}: f(x) = 0\}} \geq m_e > 0,\] then there holds
\[
\int_{\tilde{\Om}} \abs{\frac{f}{\diam(\tilde{\Om})}}^{\vt}\ dx \leq C_{(n,\vt,m_e)} \int_{\tilde{\Om}} |\nabla f|^{\vt}  \ dx.
\]

\end{theorem}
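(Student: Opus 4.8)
The plan is to bound $f$ pointwise by a Riesz-type potential of $\nabla f$ built from the zero set, and then to pass from that pointwise bound to an $L^\vt$ bound via a convolution estimate. Since the inequality is homogeneous of degree $\vt$ in $f$ and scale invariant (the factor $\diam(\tilde{\Om})$ is built in for exactly this reason), it suffices to track all constants through the scale-invariant ratio $\diam(\tilde{\Om})^n / \abs{\{f = 0\}}$. Moreover it is enough to argue when $\tilde{\Om}$ is convex: this is the only case needed in the sequel, where $\tilde{\Om}$ is a ball $B_r(x)$ and $\{f = 0\} = \Om^c \cap B_r(x)$, in which case \cref{measure_density} furnishes $\abs{\{f = 0\}} \geq c_{(n)}\, m_e\, \diam(\tilde{\Om})^n$; for a general bounded domain one would first restrict to a ball.

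First I would set $E := \{x \in \tilde{\Om} : f(x) = 0\}$ and use that $\frac{1}{\abs{E}} \int_E f\, dy = 0$ to write, for a.e.\ $x \in \tilde{\Om}$,
\[
\abs{f(x)} = \abs{f(x) - \frac{1}{\abs{E}} \int_E f(y)\, dy} \leq \frac{1}{\abs{E}} \int_{\tilde{\Om}} \abs{f(x) - f(y)}\, dy .
\]
Next, working with the absolutely continuous representative of $f \in W^{1,\vt}(\tilde{\Om}) \subset W^{1,1}(\tilde{\Om})$ and integrating $\nabla f$ along the segment $[x,y] \subset \tilde{\Om}$ (convexity is used here), then switching to polar coordinates centred at $x$ and applying Fubini, one obtains the classical estimate
\[
\int_{\tilde{\Om}} \abs{f(x) - f(y)}\, dy \leq c_{(n)}\, \diam(\tilde{\Om})^n \int_{\tilde{\Om}} \frac{\abs{\nabla f(y)}}{\abs{x - y}^{n-1}}\, dy .
\]
Combining the two displays gives, for a.e.\ $x$,
\[
\abs{f(x)} \leq \frac{c_{(n)}\, \diam(\tilde{\Om})^n}{\abs{E}}\, \bgh{ K * \gh{\abs{\nabla f}\, \rchi_{\tilde{\Om}}} }(x), \qquad K(z) := \abs{z}^{1 - n}\, \rchi_{\{\abs{z} \leq \diam(\tilde{\Om})\}} .
\]

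To finish, note that $\norm{K}_{L^1(\RR^n)} = c_{(n)}\, \diam(\tilde{\Om})$, so Young's convolution inequality gives $\norm{K * g}_{L^\vt(\RR^n)} \leq c_{(n)}\, \diam(\tilde{\Om})\, \norm{g}_{L^\vt(\RR^n)}$ for every $\vt \in [1, \infty)$. Taking $g = \abs{\nabla f}\, \rchi_{\tilde{\Om}}$ and inserting the pointwise bound yields
\[
\norm{f}_{L^\vt(\tilde{\Om})} \leq \frac{c_{(n)}\, \diam(\tilde{\Om})^{n+1}}{\abs{E}}\, \norm{\nabla f}_{L^\vt(\tilde{\Om})},
\]
and dividing by $\diam(\tilde{\Om})$ and raising to the $\vt$-th power is precisely the claimed inequality with $C_{(n,\vt,m_e)} = \lbr c_{(n)}\, \diam(\tilde{\Om})^n / \abs{E} \rbr^{\vt}$, which by the bound $\abs{E} \geq c_{(n)}\, m_e\, \diam(\tilde{\Om})^n$ depends only on $n$, $\vt$, and $m_e$.

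The one point that needs care — the ``hard part'', such as it is — is ensuring the constant depends on $m_e$ alone and not on the shape of $\tilde{\Om}$; this forces the measure-density hypothesis to be read in the scale-invariant form $\abs{\{f = 0\}} \gtrsim m_e\, \diam(\tilde{\Om})^n$ that \cref{measure_density} supplies, and it is why convexity (equivalently, reduction to a ball) enters the segment argument. As an alternative I could argue by contradiction and compactness: were the inequality false there would exist $f_k$ with $\diam(\tilde{\Om}) = 1$, $\norm{f_k}_{L^\vt} = 1$, $\norm{\nabla f_k}_{L^\vt} \to 0$ and $\abs{\{f_k = 0\}} \geq m_e$; Rellich--Kondrachov would then give $f_k \to c$ in $L^\vt$ with $c$ a nonzero constant, while $\norm{f_k - c}_{L^\vt(\tilde{\Om})}^\vt \geq \abs{c}^\vt\, \abs{\{f_k = 0\}} \geq \abs{c}^\vt m_e > 0$, a contradiction. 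This route is slicker but produces a constant whose dependence on the data is not explicit.
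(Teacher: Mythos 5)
Your proof is correct on the convex case, and it is a genuinely different route from the one the paper takes: the paper offers no argument at all and simply cites \cite[Corollary 8.2.7]{AH}, a result proved there within the capacitary framework (bounding the $L^{\vt}$ norm by the Sobolev capacity of the zero set and then estimating the capacity from below by the measure). You instead give the elementary, self-contained argument: subtract the zero average over $E := \{f = 0\}$, pass to the Riesz-potential representation $\abs{f(x)} \lesssim \abs{E}^{-1}\diam(\tilde{\Om})^n \int_{\tilde{\Om}} \abs{\nabla f(y)}\,\abs{x-y}^{1-n}\,dy$ via integration along segments, and close with Young's inequality for the truncated kernel. This buys transparent, explicit constants, which the capacity route does not readily yield.

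Two caveats, both of which you spot and handle sensibly. First, the statement as printed cannot be literally true for ``some bounded domain'' with a constant depending only on $(n,\vt,m_e)$: it needs both a scale-invariant reading of the density hypothesis ($\abs{E} \gtrsim m_e\,\diam(\tilde{\Om})^n$, which is what \cref{measure_density} actually delivers) and some geometric hypothesis on $\tilde{\Om}$ allowing the potential estimate (convexity suffices, as you use; a general bounded domain need not even admit a Poincar\'e inequality). Since the paper only ever applies the result on balls with $E = \Om^c \cap B_r$, your restriction to convex $\tilde{\Om}$ does not cost any generality that is actually used. Second, your alternative compactness argument would also need a regularity hypothesis on $\tilde{\Om}$ to invoke Rellich--Kondrachov, so it is not any more general; you are right that its chief drawback is the non-explicit constant. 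The segment/Fubini step requires the usual care about the a.e.~representative of $f$ along lines, which you flag; this is standard and unproblematic.
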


\section{Main Theorems}
\label{section3}

Let us first state the main weighted estimate that will be obtained in this paper:
\begin{theorem}\label{weighted_estimate}
 Let $S_0 \in (0,\infty)$ be given, then there exist $\ga_0 = \ga_0(n,p,\lamot,\Om)$ and $\be_2 = \be_2(n,p,\lamot,\Om)$ such that for any $\ga \in (0,\ga_0]$, if $(\aa,\Om)$ is $(\ga,S_0)$-vanishing, then the following holds for any $\be \in (0,\be_2]$: Let $\phi \in L^1_{\loc}(\Om_T)$ be a given function such that $\mm(|\phi|\lsb{\chi}{\Om_T})(z) < \infty$ almost everywhere and denote $$w(z) :=\mm(|\phi|\lsb{\chi}{\Om_T})^{-\be}(z).$$ Furthermore, let $\vec{f}\in L^{p-\be}(\Om_T) \cap L^p_{\om}(\Om_T)$ and $ \vec{h} \in L^{p-\be}(\Om_T) \cap L^p_{\om}(\Om_T)$ be any two given vector fields such that \cref{h_hypothesis} holds and let $u \in L^{p-\be}(0,T;W_0^{1,p-\be}(\Om))$ with $|\nabla u| \in L^p_{\om}(\Om_T)$ be a distributional solution of \cref{main} in the sense of \cref{def_weak_solution}, then the following a priori estimate is satisfied
 \[
     \iint_{\Om_T} |\nabla u|^p \mm(|\phi|\lsb{\chi}{\Om_T})^{-\be}(z) \ dz \apprle_{(n,p,\lamot,\Om,\om(\Om_T))} \lbr \iint_{\Om_T} \lbr |\vec{f}|^p + |\vec{h}|^p +1\rbr\mm(|\phi|\lsb{\chi}{\Om_T})^{-\be}(z) dz \rbr^{d},
 \]
where $d = d(n,p,\be) \geq 1$ is a universal constant.
\end{theorem}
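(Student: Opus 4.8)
The strategy is the classical Calderón--Zygmund / good-$\lambda$ scheme, run on the \emph{gradient of $u$} and tested against the weight $w=\mm(|\phi|\rchi_{\Om_T})^{-\be}$, with the crucial point that this particular weight lies in an $A_\infty$ (in fact $A_1$-flavoured) class with constants controlled \emph{independently of $\phi$}, as long as $\be$ is small. First I would record the key weight property: for $\phi\in L^1_{\loc}$ with $\mm(|\phi|\rchi_{\Om_T})<\infty$ a.e., the function $\mm(|\phi|\rchi_{\Om_T})^{-\be}$ is a strong $A_\infty$ weight on $\RR^{n+1}$ with $A_q$-constant bounded by a quantity depending only on $n,\be$ (this is a standard consequence of the Coifman--Rochberg observation that $(\mm g)^{\de}\in A_1$ for $0<\de<1$, applied in the parabolic/strong-maximal setting, using \cref{max_bound}). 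This is what lets \cref{weight_lemma}, \cref{weight_lemma2}, \cref{reverse_holder}, and \cref{weigted_lemma3} be applied with uniform constants, which is essential because the final estimate must hold with constants not depending on $\phi$.

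Next I would set up the two comparison estimates at the level of intrinsic parabolic cylinders. Because the equation is quasilinear and in the very weak range $(p-\be,p)$, one cannot simply test with $u$; instead the engine is the parabolic Lipschitz truncation of \cite{KL} together with the higher-integrability / comparison estimates of \cite{adimurthi2018gradient}. Concretely: (i) an \emph{interior and boundary reverse Hölder / self-improving} estimate for $|\nabla u|$ on intrinsic cylinders, yielding a small gain of integrability $p-\be\mapsto p-\be+\sigma$ and more importantly a reverse-Hölder inequality that upgrades the natural $L^{p-\be}$-energy bound; and (ii) a \emph{comparison estimate}: on each intrinsic cylinder $Q$, freeze the coefficient (replace $\aa$ by its spatial average $\aaa_{B_\rho}$) and the data, solve the homogeneous frozen problem $v$, and bound $\fiint_Q |\nabla u-\nabla v|^p$ by the oscillation term $\fiint_Q \Theta(\aa,Q)^\sigma|\nabla u|^{(\cdots)}$ plus $\fiint_Q(|\vec f|^p+|\vec h|^p+\cdots)$ — this is where the $(\ga,S_0)$-vanishing hypothesis \eqref{small_aa} and the Reifenberg flatness of $\Om$ (via \cref{measure_density}, \cref{poincare}) enter, and where the smallness $\ga\le\ga_0$ and $\be\le\be_2$ get fixed. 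For the frozen problem one uses known Lipschitz/sup bounds $\|\nabla v\|_{L^\infty(\frac12 Q)}\apprle \fiint_Q|\nabla u|$ type estimates. The hypotheses on $\vec h$ in \cref{h_hypothesis} — that $\vec h=\nabla h$ with $h_t=\dv|\vec H|^{p-2}\vec H$ and a smooth approximating sequence — are exactly what is needed to make the comparison map $\nabla u\mapsto \nabla(u+h)$ legitimate and to keep $h_t$ on the right side of the equation; I would absorb $\vec h$ into the datum at this stage.

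Then comes the good-$\lambda$ / level-set iteration. Using \cref{weight_level_set} to rewrite the weighted $L^p$-integral as an integral over super-level sets $\{\mm(|\nabla u|^{p-\be})>\lambda\}$ (or the analogous maximal-function of the gradient), I would run a Vitali/Calderón--Zygmund covering of these level sets by intrinsic cylinders, apply the comparison estimate (ii) on each, and use the $A_\infty$ property of $w$ (\cref{weight_lemma2}, \cref{a_infinity}) to transfer the Lebesgue-measure decay $|E\cap Q|\le \epsilon|Q|$ into weighted decay $w(E\cap Q)\le C\epsilon^{\tau}w(Q)$. Iterating (or summing a geometric series in the level parameter, exploiting the self-improving gain from (i)) gives
\[
\iint_{\Om_T}\mm(|\nabla u|^{p-\be})^{\frac{p}{p-\be}} w\,dz \apprle \iint_{\Om_T}\bigl(|\vec f|^p+|\vec h|^p+1\bigr)w\,dz + (\text{lower-order absorbable terms}),
\]
and since $\mm(|\nabla u|^{p-\be})^{p/(p-\be)}\ge |\nabla u|^p$ pointwise, the left side dominates $\iint|\nabla u|^p w$. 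The nonlinear/quasilinear nature forces the comparison estimate to be of the form ``$\fiint|\nabla u-\nabla v|^p \apprle (\text{small})\cdot(\fiint|\nabla u|^{\cdots})^{p/(\cdots)}+\dots$'' with a genuine power mismatch, which is why the final exponent $d=d(n,p,\be)\ge1$ appears — one iterates the nonlinear absorption a finite number of times (depending on the gap between $p$ and $p-\be$) and the right-hand side gets raised to a power $d$.

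\textbf{Main obstacle.} The hardest part is step (ii) — establishing the \emph{weighted} comparison/Calderón--Zygmund estimate for the quasilinear operator in the genuinely very-weak regime $s<p$, because there one has no energy estimate from testing with $u$ and must instead run the parabolic Lipschitz truncation machinery of \cite{KL} \emph{on intrinsic cylinders} and control the truncation error; additionally one must verify that \emph{every} constant produced (in higher integrability, in the frozen-problem Lipschitz bound, in the covering, and in the $A_\infty$ transfer) is independent of the weight $\phi$ — this uniformity is the whole point of restricting to the subclass $w=\mm(|\phi|\rchi_{\Om_T})^{-\be}$, and keeping track of it is the delicate bookkeeping that makes the theorem usable in the subsequent existence proof.
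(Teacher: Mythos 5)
Your proposal identifies the same skeleton as the paper: the uniform-in-$\phi$ $A_q$ control of $\mm(|\phi|\rchi_{\Om_T})^{-\be}$ (the paper's Claims~\ref{claim1}--\ref{claim3} and Remark~\ref{3obs_rmk}, resting on the Coifman--Rochberg observation $\mm(\cdot)^\al\in A_1$ plus \cref{lem_weight_two}); the stopping-time covering of $E_\la=\{|\nabla u|>\la\}$ by intrinsic cylinders (Lemma~\ref{lemma5.7}, lifted from \cite{adimurthi2018gradient}); the good-$\lambda$ measure decay through a comparison with a frozen problem on each cylinder under the $(\ga,S_0)$-vanishing hypothesis (Lemma~\ref{lemma7.4}, quoted from \cite{byun2013weighted}); the $A_\infty$ transfer of measure decay into weighted decay (Lemma~\ref{lemma5.11} via \cref{weight_lemma2}); and finally Cavalieri's principle \cref{weight_level_set} together with absorption of the high-$\la$ part into the left side. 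So the approach matches.

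The one substantive inaccuracy is your account of the exponent $d$. It is not produced by ``iterating the nonlinear absorption a finite number of times depending on the gap between $p$ and $p-\be$.'' In the paper, $d$ is hard-wired into the stopping-time threshold: the parameter $\alpha$ is defined by $\alpha^{(p-\de_\ve)/d}=\fiint_{\Om_T}|\nabla u|^{p-\de_\ve}+\bigl(\tfrac{|\vec h|+|\vec f|}{\ga}\bigr)^{p-\de_\ve}\,dz$ in \cref{al_0_weight}, with $d=(p-\de_\ve)/(2-\de_\ve)$ for $p\ge 2$ (and the analogous subquadratic expression). This choice is dictated by the intrinsic parabolic geometry of the Whitney/Vitali covering in \cite{adimurthi2018gradient} — the mismatch between the $p$-energy and the $L^2$-in-time scaling that shapes the intrinsic cylinders $K_r^\la$ — and $\de_\ve$ (hence $d$) ends up depending only on $n,p,\lamot$, not on $\be$. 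The power $d$ then surfaces exclusively in the \emph{low-level} term $I=p\int_0^{c_e\alpha}(\cdots)\,d\la$ of the Cavalieri split, where $I\apprle \om(\Om_T)\alpha^p$ is unwound using the unweighted estimate \cref{unweighted_estimate} raised to the power $pd/(p-\de_\ve)$ and then converted to a weighted integral via \cref{weight_lemma}, yielding $\om(\Om_T)^{1-d}\bigl(\iint(\cdots)^p\om\,dz\bigr)^d$. The high-level term $II$ is absorbed linearly (no iteration, no extra power), after $\ve$ is fixed to make the prefactor $\le\tfrac12$. So $d$ comes from the starting scale of the exit-time construction, not from a finite iteration scheme.
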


Before we state the existence result, let us first collect the restrictions on the exponent $\be_0$:
\begin{definition}[Definition of \texorpdfstring{$\be_0$}.]
\label{def_be_0}
The following restriction on the exponent $\be_0= \be_0(n,p,\lamot)$ shall be imposed. 
\begin{enumerate}
    \item\label{ve1} We need $\be_0 \leq \be_1$ so that \cref{unweighted_estimate} holds.
    \item\label{ve2} We need $\be_0 \leq \be_2$ so that \cref{weighted_estimate} holds.
    \item\label{ve3} We need $\be_0 \leq \be_4$ so that \cref{claim_weight} holds.
\end{enumerate}
\end{definition}

We are now ready to state the main theorem which is the existence of very weak solutions for \cref{main}.
\begin{theorem}
    \label{main1}
    There exists constant $\be_0= \be_0(n,p,\lamot)$ (as quantified in \cref{def_be_0}) such that for all $\be \in (0,\be_0]$ the following holds: Let $S_0>0$ be given, then there exists constants $\ga_0 = \ga_0(n,p,\lamot,\be,\Om)$ such that if $(\aa,\Om)$ is $(\ga,S_0)$-vanishing for some $\ga \in (0,\ga_0)$, then for any vector fields $\vec{f} \in L^{p-\be}(\Om_T)$ and $\vec{h} \in L^{p-\be}(\Om_T)$ satisfying \cref{h_hypothesis}, there exists a \emph{very weak solution} $u \in L^{p-\be}(0,T;W_0^{1,p-\be}(\Om))$ solving \cref{main} in the sense of \cref{def_weak_solution}.
\end{theorem}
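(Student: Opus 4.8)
The proof of \cref{main1} proceeds by a standard approximation-and-compactness scheme, combining the \emph{a priori} weighted estimate \cref{weighted_estimate} with the biting-lemma compactness argument adapted from \cite{bulicek2018well}. The plan is as follows.

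\textbf{Step 1: Approximation.} For each $k\ge 1$, I would regularize the data by replacing $\vec f$ with a smooth truncation/mollification $\vec f_k$ (so that $\vec f_k\to\vec f$ in $L^{p-\be}(\Om_T)$ and $\vec f_k$ is bounded), and replace $\vec h$ with $\vec h_k:=\nabla h_k$ where $\{h_k\}$ is the approximating sequence from \cref{h_hyp_3}, also adjusting $\vec H$ accordingly via \cref{h_hyp_2}. For this regularized data the nonlinearity \cref{abounded} provides monotonicity and $p$-growth, so the classical Minty–Browder / Galerkin theory yields a genuine \emph{weak} solution $u_k\in L^2(0,T;L^2(\Om))\cap L^p(0,T;W_0^{1,p}(\Om))$ of \cref{main} with data $(\vec f_k,\vec h_k)$, using $u_k$ itself as a test function. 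The energy estimate obtained this way is \emph{not} uniform in $k$ in the $L^p$-gradient norm (because $\vec f\notin L^p$), so it only gives compactness in weak topologies after the next step.

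\textbf{Step 2: Uniform very-weak estimate.} Here the key input is \cref{weighted_estimate}: choosing $\be\le\be_2$ and $\ga\le\ga_0$, and applying the theorem with the specific weight $w=\mm(|\phi|\chi_{\Om_T})^{-\be}$ for a suitable auxiliary function $\phi$, I obtain the bound
\[
\iint_{\Om_T}|\nabla u_k|^p\, w\,dz \apprle \lbr\iint_{\Om_T}(|\vec f_k|^p+|\vec h_k|^p+1)\,w\,dz\rbr^{d}.
\]
The role of $\mm$ is that for $\phi$ related to $|\vec f_k|+|\vec h_k|$, the quantity $(|\vec f_k|^p+|\vec h_k|^p)\,\mm(\cdots)^{-\be}$ is controlled in $L^1$ uniformly in $k$ — this is the standard trick from \cite{adimurthi2016weight,bulicek2018well} turning an $L^{p-\be}$ bound on the data into a uniform \emph{weighted} $L^p$ bound on the gradients. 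Combined with the weight self-improvement lemmas (\cref{reverse_holder_weight}, \cref{reverse_holder}) and \cref{weight_level_set}, this yields a uniform bound for $\{|\nabla u_k|\}$ in $L^{p-\be}(\Om_T)$, hence $\{u_k\}$ is uniformly bounded in $L^{p-\be}(0,T;W_0^{1,p-\be}(\Om))$, and via the equation and \cref{h_hypothesis} one gets a uniform bound for $\{\pa_t u_k\}$ in a suitable negative Sobolev–Bochner space.

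\textbf{Step 3: Passage to the limit.} By the uniform bounds, up to a subsequence $u_k\rightharpoonup u$ weakly in $L^{p-\be}(0,T;W_0^{1,p-\be}(\Om))$ and, using \cref{Aubin_Lions}, strongly in $L^{p-\be}(\Om_T)$ (and a.e.). The genuine obstacle — and the reason the compactness machinery of \cite{bulicek2018well} is needed — is identifying the nonlinear limit $\aa(x,t,\nabla u_k+\vec h_k)\rightharpoonup\aa(x,t,\nabla u+\vec h)$; one cannot simply test with $u_k-u$ since that is not an admissible test function in the very-weak class. Instead I would invoke \cref{l1_compact} (Chacon biting) to extract sets $F_j\uparrow\Om_T$ on which $\aa(x,t,\nabla u_k+\vec h_k)$ is equiintegrable, then apply the \emph{parabolic Lipschitz truncation} of \cite{KL} to the difference $u_k-u$ (suitably cut off and mollified in time to handle the time derivative) to construct admissible test functions, deduce that $\int(\aa(\cdot,\nabla u_k+\vec h_k)-\aa(\cdot,\nabla u+\vec h))\cdot\nabla(u_k-u)\to 0$ on each $F_j$, and conclude $\nabla u_k\to\nabla u$ a.e.\ via the monotonicity lower bound in \cref{abounded}; this is exactly the content of \cref{section6}. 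Once a.e.\ convergence of gradients is in hand, the Carathéodory continuity and growth bound $|\aa(x,t,\zeta)|\le\La_1|\zeta|^{p-1}$ together with Vitali's theorem let me pass to the limit in the weak formulation \cref{def_weak_solution}, and the Steklov-averaged limit identity follows, showing $u$ is the desired very weak solution. The main difficulty throughout is the interplay in Step 3 between the time-derivative term (which forces the Steklov/mollification gymnastics and the choice of test functions) and the Lipschitz-truncation level selection needed to make the "bad set" negligible — everything else is, modulo the already-established \cref{weighted_estimate}, routine approximation theory.
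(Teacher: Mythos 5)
Your proposal follows the same scheme as the paper's Section 7: regularize the data ($\vec f_k=T_k(\vec f)$, $\vec h_k=\nabla h_k$), use \cref{weighted_estimate} with the weight $\om=\mm((1+|\vec f|+|\vec h|)\chi_{\Om_T})^{-\be}$ (and \cref{unweighted_estimate}) to obtain uniform weighted and $L^{p-\be}$ bounds for $\nabla u^k$, then extract limits via Aubin--Lions, apply Chacon's biting lemma to get the sets $F_j$, and close the argument using the parabolic Lipschitz truncation machinery of Section 6 applied to $v^k=u^k-u$. The one place where your route visibly departs from the paper is the final identification of the nonlinear limit: you propose to show $\iint_{F_j}\langle\aa(\cdot,\nabla u^k+\vec h_k)-\aa(\cdot,\nabla u+\vec h),\nabla(u^k-u)\rangle\om\,dz\to 0$ and then use the strict monotonicity of $\aa$ to deduce $\nabla u^k\to\nabla u$ a.e.\ on $F_j$, after which Vitali and the growth bound finish the passage to the limit; the paper instead proves the weak-$L^1(F_j)$ convergence $\langle\aa^k,\nabla u^k-\nabla u\rangle\om\rightharpoonup 0$ (\cref{claim7.1}) and identifies $\bar\aa=\aa(\cdot,\nabla u+\vec h)$ by Minty's trick (\cref{claim7.2}), never proving a.e.\ gradient convergence. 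Both are standard and essentially interchangeable here, since the paper's weak-$L^1$ convergence plus the weak convergence $\nabla u^k\rightharpoonup\nabla u$ in $L^p_\om(F_j)$ would also give your strong convergence of the monotonicity quantity; what the Minty route buys is that one avoids checking equiintegrability of $\aa^k$ itself a second time. One detail you gloss over but which the paper must verify is \cref{claim_weight}: to invoke \cref{lip_seq} one needs $\om\in A_{p/q}$ with the specific $q\in\{p-1,1\}$, and this imposes the additional restriction $\be<\be_4=\min\{1/(p-1),\,p-1\}$ in \cref{def_be_0}; your proposal should at least flag that the weight class needed for the compactness theorem must be matched to the choice of $q$.
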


\section{A priori estimates}
\label{section4}

In this section, we shall obtain the main a priori estimates that will be needed to prove the existence results. The first is an unweighted estimate below the natural exponent for \emph{very weak solutions} of \cref{main}. The proof of this result follows essentially as in \cite[Theorem 6.1]{adimurthi2018gradient} and the theorem reads as follows:
\begin{theorem}
    \label{unweighted_estimate}
    Let the nonlinearity $\aa$ satisfy \cref{abounded} and $\Om$ be a bounded domain that satisfies \cref{measure_density}. Then there exists $\be_1  = \be_1(n,p,\lamot,m_e) \in (0,1)$ such that for all $\be \in (0,\be_1]$ and any two given vector fields $\vec{f} \in L^{p-\be}(\Om_T)$ and $\vec{h} \in L^{p-\be}(\Om_T)$ satisfying \cref{h_hypothesis}, the following holds:  For any very weak solution $u \in L^{p-\be} (0,T; W_0^{1,p-\be}(\Om))$ solving \cref{main} in the sense of \cref{very_weak_solution}, there holds
    \[
        \iint_{\Om_T} |\nabla u|^{p-\be} \ dz \apprle_{(n,p,\lamot,m_e)} \iint_{\Om_T} \lbr |\vec{h}|^{p-\be} + |\vec{f}|^{p-\be}\rbr \ dz.
    \]
\end{theorem}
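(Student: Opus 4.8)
The plan is to run the Calder\'on--Zygmund argument based on the parabolic Lipschitz truncation of \cite{KL}, following \cite[Theorem 6.1]{adimurthi2018gradient} and adapting it to the nonhomogeneous structure at hand. As a preliminary step I would extend $u,\vec{f},\vec{h}$ by zero outside $\Om_T$ and set $g:=|\nabla u|+|\vec{f}|+|\vec{h}|$. From $\aa(x,t,0)=0$ together with \eqref{abounded} one has the monotonicity inequality for $\aa$ and the bound $|\aa(x,t,\nabla u+\vec{h})|\le\La_1|\nabla u+\vec{h}|^{p-1}$, while \cref{h_hypothesis} supplies a potential $h$ with $\vec{h}=\nabla h$, the representation $h_t=\dv|\vec{H}|^{p-2}\vec{H}$, and smooth $h_k$ with $\nabla h_k\to\vec{h}$ in $L^{p-\be}(\Om_T)$; these last facts are precisely what is needed to carry $\vec{h}$ legitimately through the testing step and the subsequent passage to the limit. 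Since $u\in L^2(0,T;L^2(\Om))\cap L^{p-\be}(0,T;W_0^{1,p-\be}(\Om))$ is given, $\iint_{\Om_T}|\nabla u|^{p-\be}$ is a priori finite, which is what will make the final absorption legitimate.

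For each level $\la$ above a threshold $\la_0$, consider the bad set $B_\la:=\{\,z:\mm(g^{p-\be}\chi_{\Om_T})(z)>\la^{p-\be}\,\}$, where $\mm$ is the parabolic maximal function \eqref{par_max}; by the weak-$(1,1)$ bound of \cref{max_bound}, $|B_\la|\lesssim\la^{-(p-\be)}\iint_{\{g>c\la\}}g^{p-\be}$. Using a Whitney-type decomposition of $B_\la$ into parabolic cylinders (intrinsically scaled to the $p$-Laplacian) I would form the global parabolic Lipschitz truncation $u_\la$ of \cite{KL}, with the properties that (up to null sets) $\{u\neq u_\la\}\subset B_\la$, that $|\nabla u_\la|\lesssim\la$ on the intrinsic scale, and --- the delicate property --- that for admissible time cutoffs $\zeta$ one has $\bigl|\iint_{\Om_T}\pa_t u_\la\,(u_\la-u)\,\zeta\bigr|\lesssim\la^p|B_\la|$, coming from the mollified stopping-time construction. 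For Whitney cylinders that meet $\pa\Om$ I would additionally invoke \cref{measure_density} together with Poincar\'e's inequality \cref{poincare} so as to exploit the vanishing boundary values of $u$.

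Testing \cref{def_weak_solution} against $(u-u_\la)\zeta$ --- admissible after an auxiliary truncation in the value of $u$ to stay within the test class of \cref{very_weak_solution}, followed by a limit --- and using the monotonicity and growth bounds \eqref{abounded} for the pair $\nabla u+\vec{h}$, $\nabla u_\la+\vec{h}$, the good sign of $\iint\pa_t(u-u_\la)(u-u_\la)\zeta$, the estimate for $\pa_t u_\la$ above, and $h_t=\dv|\vec{H}|^{p-2}\vec{H}$ to handle $\pa_t h$, one obtains on $B_\la$ a Caccioppoli-type inequality which, after summation over the Whitney cylinders and using that the average of $g^{p-\be}$ over each cylinder is comparable to $\la^{p-\be}$ (so that $L^p$-integrals of the data over the cylinder convert into $L^{p-\be}$-integrals at the cost of a factor $\la^{\be}$), yields a level-set inequality for the distribution function of $|\nabla u|$. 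Multiplying by $\la^{-1-\be}$, integrating in $\la$ over $(\la_0,\infty)$ and invoking \cref{weight_level_set} with $w\equiv1$ produces an estimate of the form
\[
\iint_{\Om_T}|\nabla u|^{p-\be}\,dz\ \lesssim\ C\be\iint_{\Om_T}|\nabla u|^{p-\be}\,dz\ +\ C\iint_{\Om_T}\bigl(|\vec{f}|^{p-\be}+|\vec{h}|^{p-\be}\bigr)\,dz,
\]
with $C=C(n,p,\lamot,m_e)$; taking $\be_1$ so small that $C\be_1<\tfrac12$ and absorbing the first term on the right finishes the proof for all $\be\le\be_1$.

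I expect the genuine obstacle to be the parabolic term. In the elliptic situation $u-u_\la$ is at once an admissible test function and $\pa_t$ is absent, whereas here one must use the fine stopping-time construction of the parabolic Lipschitz truncation of \cite{KL} to show that $\iint\pa_t u_\la\,(u_\la-u)\,\zeta$ is controlled by $\la^p|B_\la|$; this is what forces the maximal function, the Whitney decomposition, the truncation and the cutoffs to be built on a single intrinsic parabolic geometry, and it is also the reason one can only descend a fixed interval $(p-\be_1,p)$ below the natural exponent. A secondary technical point is the justification that $(u-u_\la)\zeta$ may be used as a test function for the merely very weak solution $u$ --- handled by the auxiliary value-truncation and by hypothesis \cref{h_hypothesis} on $\vec{h}$.
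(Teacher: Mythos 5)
Your proposal follows exactly the route the paper points to: the paper does not write out a proof but refers to the parabolic Lipschitz truncation argument of Kinnunen--Lewis as implemented in \cite[Theorem 6.1]{adimurthi2018gradient}, and your sketch (intrinsic Whitney decomposition of the bad set, the global parabolic Lipschitz truncation $u_\la$ with the delicate $\pa_t u_\la$ estimate, testing with $(u-u_\la)\zeta$, boundary control via \cref{measure_density} and \cref{poincare}, level-set integration via \cref{weight_level_set}, and absorption for small $\be$) is precisely that argument adapted to the extra vector field $\vec{h}$ under \cref{h_hypothesis}. This matches the paper's intended proof.
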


To obtain the existence of \emph{very weak solutions} to \cref{main}, we need to obtain suitable weighted esitmates which we will obtain as follows. Let us first recall several important properties of the special class of Muckenhoupt weights that we will consider. The first is a description of a subset of $A_1$ weights (see for example \cite[Theorem 9.2.7]{Grafakos} or \cite[Pages 229-230]{torchinsky1986real} for the details).
\begin{lemma}
\label{lem_weight_one}
    Let $\phi \in L^1_{\loc}(\Om_T)$ be any function such that $\mm(\phi)(z) < \infty$ almost everywhere and $\al \in (0,1)$ be a given exponent, then the following two conclusions hold: Firstly, the function $\mm(\phi)^{\al}(z) \in A_1(\RR^{n+1})$ and secondly, the $A_1$ norm of this weight is given by
    \[
        [\mm(\phi)^{\al}]_{A_1} \leq \frac{C(n)}{1-\al}.
    \]
\end{lemma}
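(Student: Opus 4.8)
The plan is to establish the two assertions separately, both reducing to the weak-type $(1,1)$ bound for $\mm$ recorded in \cref{max_bound}. The only quantity that must be controlled is, for an arbitrary parabolic cylinder $Q = Q_{\rho,s}(\mfz)$,
\[
    \lbr \fiint_Q \mm(\phi)^\al \ dz \rbr \norm{\mm(\phi)^{-\al}}_{L^\infty(Q)},
\]
and since $\norm{\mm(\phi)^{-\al}}_{L^\infty(Q)} = \lbr \inf_{Q} \mm(\phi)\rbr^{-\al}$, it suffices to prove
\[
    \fiint_Q \mm(\phi)^\al \ dz \le \frac{C(n)}{1-\al}\, \lbr \inf_{z\in Q} \mm(\phi)(z)\rbr^\al .
\]
First I would split $\phi = \phi_1 + \phi_2$ with $\phi_1 := \phi \lsb{\chi}{\tQ}$, where $\tQ$ is the parabolic dilate of $Q$ by a fixed factor (say $\tQ := Q_{5\rho,5s}(\mfz)$, chosen so that any parabolic cylinder meeting $Q$ and not contained in $\tQ$ is comparable in measure to $Q$). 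By sublinearity of the maximal operator, $\mm(\phi) \le \mm(\phi_1) + \mm(\phi_2)$, and hence by the elementary inequality $(a+b)^\al \le a^\al + b^\al$ for $\al \in (0,1)$ we get $\mm(\phi)^\al \le \mm(\phi_1)^\al + \mm(\phi_2)^\al$ pointwise, so it is enough to bound the average of each term separately.

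For the local piece $\mm(\phi_1)^\al$, I would use the distribution-function formula together with the weak-type $(1,1)$ estimate from \cref{max_bound} applied to $\phi_1 \in L^1(\RR^{n+1})$: writing $A = \norm{\phi_1}_{L^1}/|Q|$,
\[
    \fiint_Q \mm(\phi_1)^\al \ dz
    = \frac{\al}{|Q|}\int_0^\infty \la^{\al-1}\, \abs{\mgh{z\in Q : \mm(\phi_1)(z) > \la}}\ d\la,
\]
and splitting the $\la$-integral at $\la = A$ — using $\abs{\mgh{\cdots}} \le |Q|$ on $(0,A)$ and $\abs{\mgh{\cdots}} \le 5^{n+2}\norm{\phi_1}_{L^1}/\la$ on $(A,\infty)$ — the two contributions are $A^\al$ and $\frac{\al}{1-\al} 5^{n+2} A \cdot A^{\al-1} = \frac{\al\, 5^{n+2}}{1-\al} A^\al$ up to constants, so altogether $\fiint_Q \mm(\phi_1)^\al \le \frac{C(n)}{1-\al} A^\al$. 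It then remains to observe that $A = \fiint_{\tQ} |\phi| \cdot \frac{|\tQ|}{|Q|} \le C(n) \inf_{z\in Q}\mm(\phi)(z)$, because $\tQ$ is a parabolic cylinder containing any given point of $Q$ and $|\tQ|/|Q|$ is a dimensional constant. For the tail piece $\mm(\phi_2)$, the key observation is that it is essentially constant on $Q$: for $z \in Q$, any parabolic cylinder $\tQ'$ realizing (nearly) the supremum defining $\mm(\phi_2)(z)$ must have nontrivial intersection with $\spt \phi_2 \subset \tQ^c$, hence is not contained in $\tQ$, hence — by the choice of dilation factor — satisfies $|\tQ'| \gtrsim |Q|$ and $\tQ' \subset c\,\tQ'' $ for a cylinder $\tQ''$ centered near $\mfz$ and comparable to $\tQ'$; a routine comparison gives $\mm(\phi_2)(z) \le C(n)\, \mm(\phi_2)(z')$ for all $z,z' \in Q$, and in particular $\sup_Q \mm(\phi_2) \le C(n)\inf_Q \mm(\phi) $. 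Combining the two bounds yields the claimed inequality, which is exactly the statement that $\mm(\phi)^\al \in A_1(\RR^{n+1})$ with $[\mm(\phi)^\al]_{A_1} \le C(n)/(1-\al)$.

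The main obstacle — really the only point requiring care rather than bookkeeping — is the quantitative "near-constancy of the tail" step: making precise the geometric claim that a parabolic cylinder through a point of $Q$ which is not swallowed by the dilate $\tQ$ has measure comparable to $|Q|$ and can be replaced, up to a dimensional factor, by a cylinder adapted to $\mfz$. This is where the anisotropic (parabolic) scaling $Q_{\rho,s}$ with independent radii $\rho$ and $s$ must be handled honestly, since the supremum in \eqref{par_max} runs over all such cylinders; the argument is standard (it is the parabolic analogue of the classical Euclidean fact that $\mm\phi$ applied to the tail is doubling on a cube), but it is the step one should write out in full. Everything else — the $L^p$-style splitting, the layer-cake identity (which is the $\al = 1$, $\be = \al$ special case of \cref{weight_level_set} in spirit), and the weak-$(1,1)$ input — is immediate from results already available in the excerpt.
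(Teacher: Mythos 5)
The paper does not give its own proof of this lemma; it cites the classical Coifman--Rochberg theorem (Grafakos, Theorem 9.2.7; Torchinsky, pp.~229--230), and your write-up is a reconstruction of exactly that argument, so methodologically you are aligned with the paper's intent. For the local piece $\phi_1$ the splitting of the layer-cake integral at the height $A=\|\phi_1\|_{L^1}/|Q|$, combined with the weak-$(1,1)$ bound, is exactly the textbook computation and gives the claimed $C(n)/(1-\al)$.

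The gap is in the step you yourself flag as the ``only point requiring care.'' You assert that any cylinder $\tQ'$ meeting $Q$ and not contained in $\tQ := Q_{5\rho,5s}(\mfz)$ must satisfy $|\tQ'|\gtrsim|Q|$. If $\mm$ in \eqref{par_max} and the $A_1$ class in \cref{muck_weight} are read as written --- suprema over cylinders $Q_{a,b}$ with $a,b\in\RR^{+}$ \emph{independent} --- this is false: $\tQ'=Q_{a,b}(\mfz')$ with $a>2\rho$ but $b\ll s$ meets $Q$, escapes $\tQ$ spatially, and yet has $|\tQ'|=c_n a^n b$ arbitrarily small relative to $|Q|=c_n\rho^n s$; the cost of enlarging it to a cylinder through an arbitrary $z'\in Q$ is then of order $(b+2s)/b$, which is unbounded, so the ``tail is doubling on $Q$'' argument does not close. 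This is precisely the known obstruction to Coifman--Rochberg for strong (multi-parameter) maximal functions; when the result holds there, the proof goes by slicing and iterating the one-parameter theorem in each variable, not by the ball argument. Relatedly, your treatment of $\phi_1$ leans on the weak-$(1,1)$ inequality in \cref{max_bound}, which likewise fails for the two-parameter operator (the sharp endpoint is $L\log L$) --- here you are inheriting a questionable input from the paper rather than introducing a new gap, but the dependence is worth flagging. Under the alternative reading in which $Q_{\rho,s}$ is a genuine parabolic quasi-metric ball with linked radii $s\sim\rho^{2}$ (which is what the constant $5^{n+2}$ in \cref{max_bound} and the paper's one-parameter citations both suggest), the escape in either the space or time direction forces $a\gtrsim\rho$ alone, the claim $|\tQ'|\gtrsim|Q|$ is true, and your outline is the standard, complete proof once the enlargement is written out. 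You should either commit to that one-parameter interpretation and say so, or replace the tail step with the iterated one-dimensional argument.
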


The second lemma that we will need is a way to construct $A_p$ weights from $A_1$ weights (see for example \cite[Exercise 9.1.2]{Grafakos} for the details).
\begin{lemma}
\label{lem_weight_two}
Given any weight $ \om \in A_1$ and any $1< p < \infty$, we have $\om^{1-p} \in A_p$ with 
\[
    [\om^{1-p}]_{A_p} \leq [\om]_{A_1}^{p-1}.
\]
\end{lemma}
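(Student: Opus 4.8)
The plan is to verify the $A_p$ condition for the candidate weight $\nu := \om^{1-p}$ directly from the definitions in \cref{muck_weight}, exploiting the algebraic identity that the conjugate-exponent normalization turns $\nu^{-1/(p-1)}$ back into $\om$. First I would fix an arbitrary parabolic cylinder $Q = Q_{\rho,s}(\mfz)$ and observe that
\[
\left(\fiint_Q \nu^{-\frac{1}{p-1}}\right)^{p-1} = \left(\fiint_Q \om^{-\frac{1-p}{p-1}}\right)^{p-1} = \left(\fiint_Q \om\right)^{p-1},
\]
since $-\tfrac{1-p}{p-1} = 1$. Hence the $A_p$ product for $\nu$ over $Q$ equals $\left(\fiint_Q \om^{1-p}\right)\left(\fiint_Q \om\right)^{p-1}$, and it remains only to control the first factor.

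Next I would bound $\fiint_Q \om^{1-p}$ using the $A_1$ hypothesis. Since $1-p<0$, we may write $\om^{1-p} = (\om^{-1})^{p-1}$, and for almost every $z \in Q$ the definition of the strong $A_1(\RR^{n+1})$ weight gives the pointwise estimate
\[
\om^{-1}(z) \leq \norm{\om^{-1}}_{L^\infty(Q)} \leq [\om]_1 \left(\fiint_Q \om\right)^{-1}.
\]
Raising this to the power $p-1$ and averaging over $Q$ yields
\[
\fiint_Q \om^{1-p} \leq [\om]_1^{\,p-1}\left(\fiint_Q \om\right)^{-(p-1)}.
\]

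Combining the two displays, the $A_p$ product over $Q$ is at most $[\om]_1^{\,p-1}$, and since $Q$ was an arbitrary parabolic cylinder, taking the supremum gives $[\om^{1-p}]_{p} \leq [\om]_1^{\,p-1}$, which is exactly the claim. There is no genuine obstacle in this argument; the only points requiring a little care are tracking the exponents correctly (in particular the identity $-\tfrac{1-p}{p-1}=1$) and invoking the $A_1$ condition in its $L^\infty$ form pointwise almost everywhere, rather than merely in an averaged sense, so that the power $p-1$ can be taken before integrating.
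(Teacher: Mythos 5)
Your proof is correct, and it is the standard direct verification of the $A_p$ condition for $\om^{1-p}$; the paper itself does not prove this lemma but cites it as an exercise in Grafakos, and your argument is exactly what that exercise calls for. The two key steps — the cancellation $\nu^{-1/(p-1)} = \om$ and the pointwise use of the $L^\infty$ form of the $A_1$ bound so that it can be raised to the power $p-1$ before averaging — are handled correctly, and the exponent bookkeeping is right.
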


We shall make the following important observations that will enable us to obtain the desired weighted estimates in this section.

\begin{remark}\label{3obs_rmk}
From \cref{lem_weight_one}, \cref{lem_weight_two} and \cref{reverse_holder_weight}, we have the following three observations:
\begin{description}
    \item[Observation 1:] Given any function $\phi \in L^1_{\loc}(\RR^{n+1})$, the function given by $\mm(\phi)^{\al(1-p)}(z)$ for some $\al \in (0,1/2]$ and $1 < p < \infty$ is an $A_p$ weight.
    \item[Observation 2:] The $A_p$ constant of the weight given by $\mm(\phi)^{\al(1-p)}$ is independent of the function $\phi$ and $\al \in [0,1/2]$ and depends only $n$ and $p$. In particular,
                   \begin{equation}\label{obs_we1}
                       [\mm(\phi)^{\al(1-p)}]_{A_p} \overset{\text{\cref{lem_weight_two}}}{\leq} [\mm(\phi)^{\al}]_{A_1}^{p-1} \overset{\text{\cref{lem_weight_one}}}{\leq} \lbr \frac{C(n)}{1/2}\rbr^{p-1} = C(n,p).
                   \end{equation}
   \item[Observation 3: ] As a consequence of  \cref{obs_we1} and \cref{reverse_holder_weight}, we see that the weight given by $\mm(\phi)^{\al(1-p)}$ is in $A_{p-\de_0}$ for some universal constant $\de_0 = \de_0(n,p)$. In particular, the self improvement property is independent of $\phi$ and $\al$.
\end{description}
\end{remark}

\subsection{Covering arguments needed for the proof of \texorpdfstring{\cref{weighted_estimate}}.}

The proof of this theorem crucially uses some of the a priori estimates proved in \cite{adimurthi2018gradient,byun2017weighted}.  The a priori estimates below the natural exponent are proved in \cite{adimurthi2018gradient} and the covering argument we follow is obtained in \cite{byun2017weighted} based on the techniques developed in the \cite{MR2286632}.

    Let $\ve \in (0,1)$ be given (which will eventually be chosen) and for $\be \in (0,\be_2)$ (note that $\be_2$ will eventually be fixed to depend on data), consider the weight given by 
    \begin{equation}\label{def_weight_est}
        \mm(|\phi|\lsb{\chi}{\Om_T})^{-\be} (z)  \txt{for some} \phi \in L^1_{\loc}(\Om_T).
    \end{equation}

    For a given $\ve \in (0,1)$, we can find a $\de_{\ve} = \de_{\ve}(n,p,\lamot,\ve)\in(0,1)$ such that \cite[Theorem 5.6 and Theorem 5.7]{adimurthi2018gradient} holds.
    Here we take the largest possible choice of $\de_{\ve}$. 
    
    \begin{claim}\label{claim1}
        There exists $\tilde{\be}_1 := \frac{\de_{\ve}}{2(p-\de_{\ve})}$ such that for all $\be \in (0,\tilde{\be}_1)$, we have
        \[
          \left[\mm(|\phi|\lsb{\chi}{\Om_T})^{-\be}\right]_{A_{\frac{p}{p-\de_{\ve}}}} \leq C(n).
          \]
    \end{claim}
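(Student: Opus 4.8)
The plan is to recognize the weight $\mm(|\phi|\lsb{\chi}{\Om_T})^{-\be}$ as a small negative power of a maximal function, and to show that for $\be$ in the stated range this power is of the form $\mm(g)^{\al(1-q)}$ with $q = \frac{p}{p-\de_{\ve}}$ and $\al \le 1/2$, so that Observation 2 of \cref{3obs_rmk} (equivalently \cref{lem_weight_one} combined with \cref{lem_weight_two}) applies directly and yields a bound on the $A_q$ constant depending only on $n$. Concretely, I would set $g := |\phi|\lsb{\chi}{\Om_T} \in L^1_{\loc}(\Om_T)$ (which is in $L^1_{\loc}(\RR^{n+1})$ after extension by zero), and write
\[
    \mm(g)^{-\be} = \mm(g)^{\al(1-q)}, \txt{where} \al := \frac{\be}{q-1} = \frac{\be}{\frac{p}{p-\de_{\ve}}-1} = \frac{\be(p-\de_{\ve})}{\de_{\ve}}.
\]
The first step is this algebraic identification of exponents.

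The second step is to check the constraint $\al \in (0,1/2]$ that is needed to invoke \cref{lem_weight_one}. Since $\be < \tilde{\be}_1 = \frac{\de_{\ve}}{2(p-\de_{\ve})}$, we get
\[
    \al = \frac{\be(p-\de_{\ve})}{\de_{\ve}} < \frac{\de_{\ve}}{2(p-\de_{\ve})}\cdot\frac{p-\de_{\ve}}{\de_{\ve}} = \frac12,
\]
and $\al > 0$ because $\be > 0$ and $\de_{\ve} \in (0,1) < p$ (here $p > \de_{\ve}$ since $p > \frac{2n}{n+2} > 0$ and $\de_{\ve}$ comes from a self-improvement exponent, so $\de_{\ve} < p$; one should note this explicitly). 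So $\al \in (0,1/2)$, as required.

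The third step is to apply the two-step construction: by \cref{lem_weight_one}, $\mm(g)^{\al} \in A_1(\RR^{n+1})$ with $[\mm(g)^{\al}]_{A_1} \le \frac{C(n)}{1-\al} \le 2C(n)$ since $\al \le 1/2$; then by \cref{lem_weight_two} with exponent $q = \frac{p}{p-\de_{\ve}} \in (1,\infty)$, the weight $\mm(g)^{\al(1-q)} = \mm(g)^{-\be}$ lies in $A_q$ with
\[
    \left[\mm(g)^{-\be}\right]_{A_q} \le [\mm(g)^{\al}]_{A_1}^{q-1} \le (2C(n))^{q-1}.
\]
Strictly this gives a bound depending on $q$, hence on $\de_{\ve}$; but $q - 1 = \frac{\de_{\ve}}{p-\de_{\ve}} \le \frac{1}{p-1}$ (using $\de_{\ve} < 1 \le$ something, or more simply $\de_{\ve} \le$ the universal self-improvement exponent $\de_0(n,p)$ from Observation 3), so $(2C(n))^{q-1}$ is bounded by a constant depending only on $n$ and $p$; since the paper writes the bound as $C(n)$ I would either absorb the $p$-dependence into the statement's implicit dependence on the data or simply note $q \in (1,2)$ say, making the exponent bounded. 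The one genuine subtlety — really the only place to be careful — is exactly this: ensuring the $A_q$-constant bound is independent of $\ve$ (equivalently of $\de_{\ve}$), which works precisely because $\al \le 1/2$ is a $\ve$-independent cap and because $q-1$ stays bounded; this is the content of Observations 2 and 3 in \cref{3obs_rmk}, and invoking them is the cleanest way to conclude.
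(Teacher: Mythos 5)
Your proof is correct and follows essentially the same route as the paper: the same algebraic identification $-\be = \al(1-q)$ with $q = \frac{p}{p-\de_\ve}$ and $\al = \frac{\be(p-\de_\ve)}{\de_\ve}$, the same check $\al \leq \frac12$ from the choice of $\tilde\be_1$, and the same two-step application of \cref{lem_weight_one} and \cref{lem_weight_two}. The only (minor) difference is in the final bookkeeping on the exponent $q-1$: the paper simply asserts $q-1 = \frac{\de_\ve}{p-\de_\ve} \leq 1$ to absorb everything into $C(n)$, whereas you are slightly more careful in noting that this exponent is where a $p$-dependence could in principle enter; both readings lead to the same conclusion.
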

    \begin{proof}[Proof of \cref{claim1}]
        We have the following sequence of estimates:
        \[
            \begin{array}{rcl}
                \left[\mm(|\phi|\lsb{\chi}{\Om_T})^{-\be}\right]_{A_{\frac{p}{p-\de_{\ve}}}} & = & \left[\mm(|\phi|\lsb{\chi}{\Om_T})^{\frac{\be(p-\de_{\ve})}{\de_{\ve}}\lbr 1-\frac{p}{p-\de_{\ve}}\rbr}\right]_{A_{\frac{p}{p-\de_{\ve}}}}\\
                & \overset{\text{\cref{lem_weight_two}}}{\leq} & \left[\mm(|\phi|\lsb{\chi}{\Om_T})^{\frac{\be(p-\de_{\ve})}{\de_{\ve}}}\right]_{A_{1}}^{\lbr \frac{p}{p-\de_{\ve}}-1\rbr}\\
                & \overset{\text{\cref{lem_weight_one}}}{\leq} & \lbr \frac{C(n)}{1-\frac{\be(p-\de_{\ve})}{\de_{\ve}}}\rbr^{\lbr \frac{p}{p-\de_{\ve}}-1\rbr}\\
                & \overset{\redlabel{cl1}{a}}{\leq} & C(n).
            \end{array}
        \]
    To obtain \redref{cl1}{a}, from the choice of $\tilde{\be}_1$, we see that  $\frac{\be(p-\de_{\ve})}{\de_{\ve}} \leq \frac12$ and $\frac{p}{p-\de_{\ve}}-1 = \frac{\de_{\ve}}{p-\de_{\ve}} \leq 1$.
    \end{proof}
 
     \begin{claim}
     \label{claim2}
         For any $\be \in (0,\tilde{\be}_2)$, with $\tilde{\be}_2 := \frac{p-1}{2}$, we have
         \[
             \left[\mm(|\phi|\lsb{\chi}{\Om_T})^{-\be}\right]_{A_p} \leq C(n,p).
         \]
     \end{claim}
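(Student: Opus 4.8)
The plan is to repeat the computation from the proof of \cref{claim1} verbatim, but with the target class being $A_p$ itself rather than $A_{p/(p-\de_\ve)}$. The only mechanism used there is the elementary identity that lets one realize a negative power of a maximal function as a weight of the form $\om^{1-p}$ with $\om\in A_1$: here one writes
\[
  -\be \;=\; \frac{\be}{p-1}\,(1-p),
\]
so that, setting $\al:=\frac{\be}{p-1}$ and $\om:=\mm(|\phi|\lsb{\chi}{\Om_T})^{\al}$, we have $\mm(|\phi|\lsb{\chi}{\Om_T})^{-\be}=\om^{1-p}$. This reduces the claim to \cref{lem_weight_one} and \cref{lem_weight_two}, exactly as in \eqref{obs_we1}.

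In detail I would proceed as follows. First, check the exponent range: since $p>1$ by \eqref{restriction_p}, division by $p-1$ is legitimate, and from $0<\be<\tilde{\be}_2=\frac{p-1}{2}$ we get $0<\al=\frac{\be}{p-1}<\frac12$. Next, because $\mm(|\phi|\lsb{\chi}{\Om_T})<\infty$ almost everywhere by hypothesis on $\phi$, \cref{lem_weight_one} applies to $\om=\mm(|\phi|\lsb{\chi}{\Om_T})^{\al}$ and gives $\om\in A_1$ with
\[
  [\om]_{A_1}\le \frac{C(n)}{1-\al}\le \frac{C(n)}{1/2}=2C(n),
\]
using $\al<\frac12$. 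Finally, apply \cref{lem_weight_two} with exponent $p$ to conclude $\om^{1-p}\in A_p$ with $[\om^{1-p}]_{A_p}\le[\om]_{A_1}^{\,p-1}\le (2C(n))^{p-1}=:C(n,p)$. Since $\om^{1-p}=\mm(|\phi|\lsb{\chi}{\Om_T})^{\al(1-p)}=\mm(|\phi|\lsb{\chi}{\Om_T})^{-\be}$, this is the asserted bound, and the constant depends only on $n$ and $p$.

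I do not expect any genuine obstacle here: the entire content is the algebraic identity $-\be=\frac{\be}{p-1}(1-p)$ combined with the smallness condition $\be<\frac{p-1}{2}$, which is precisely what keeps the $A_1$-exponent $\al$ strictly below $1/2$ so that the constant in \cref{lem_weight_one} stays bounded by $2C(n)$ uniformly in $\phi$ and in $\be$. The only points worth double-checking are that $p>1$ (so the exponent $1-p$ is negative and the rewriting is valid) and that the hypothesis on $\phi$ guarantees finiteness of $\mm(|\phi|\lsb{\chi}{\Om_T})$ a.e., which is needed to invoke \cref{lem_weight_one}.
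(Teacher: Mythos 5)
Your proposal is correct and is essentially the paper's own argument: the paper likewise rewrites $-\be = \frac{\be}{p-1}(1-p)$, applies \cref{lem_weight_two} to get $\bigl[\mm(|\phi|\lsb{\chi}{\Om_T})^{-\be}\bigr]_{A_p} \leq \bigl[\mm(|\phi|\lsb{\chi}{\Om_T})^{\be/(p-1)}\bigr]_{A_1}^{p-1}$, and then \cref{lem_weight_one} together with $2\be \leq p-1$ to bound the $A_1$ constant by $2C(n)$. Your added remarks about $p>1$ and the a.e.\ finiteness of $\mm(|\phi|\lsb{\chi}{\Om_T})$ are correct side conditions but do not change the route.
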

     \begin{proof}[Proof of \cref{claim2}]
         We have the following sequence of estimates:
        \[
            \begin{array}{rcl}
                \left[\mm(|\phi|\lsb{\chi}{\Om_T})^{-\be}\right]_{A_{p}} & = & \left[\mm(|\phi|\lsb{\chi}{\Om_T})^{\frac{\be}{p-1}\lbr 1-p\rbr}\right]_{A_{p}}\\
                & \overset{\text{\cref{lem_weight_two}}}{\leq} & \left[\mm(|\phi|\lsb{\chi}{\Om_T})^{\frac{\be}{p-1}}\right]_{A_{1}}^{p-1}\\
                & \overset{\text{\cref{lem_weight_one}}}{\leq} & \lbr \frac{C(n)}{1-\frac{\be}{p-1}}\rbr^{p-1}\\
                & \overset{\redlabel{cl2}{a}}{\leq} & C(n,p).
            \end{array}
        \]
        To obtain \redref{cl2}{a}, we made use of the fact that $2\be \leq p-1$ from the hypothesis.
     \end{proof}

     From \cref{claim1} and \cref{reverse_holder}, we see that there exists constants $\tilde{\de} = \tilde{\de}(n,p,\de_{\ve})$ and $C= C(n,p,\de_{\ve})$ such that 
     \begin{equation}\label{self_impr_bound}
         \left[\mm(|\phi|\lsb{\chi}{\Om_T})^{-\be}\right]_{A_{\frac{p}{p-\de_{\ve}}-\tilde{\de}}} \leq C \left[\mm(|\phi|\lsb{\chi}{\Om_T})^{-\be}\right]_{A_{\frac{p}{p-\de_{\ve}}}} \leq C(n,p,\de_{\ve}).
     \end{equation}
     It is important to note that 
     \begin{equation}\label{5.4}
         1< \frac{p}{p-\de_{\ve}} - \tilde{\de} < \frac{p}{p-\de_{\ve}}.
     \end{equation}

     \begin{claim}
         \label{claim3}
         If we further restrict $\be \in (0,\tilde{\be}_3)$ where $\tilde{\be}_3 := \frac{\de_{\ve}-\tilde{\de}(p-\de_{\ve})}{2(p-\de_{\ve})}$, then we have the following ameliorated bound
         \[
             \left[\mm(|\phi|\lsb{\chi}{\Om_T})^{-\be}\right]_{A_{\frac{p}{p-\de_{\ve}}-\tilde{\de}}} \leq C(n).
         \]
     \end{claim}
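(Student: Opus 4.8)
The plan is to prove \cref{claim3} by exactly the factorization device already used for \cref{claim1} and \cref{claim2}, applied now with the self-improved exponent $\mfq:=\frac{p}{p-\de_{\ve}}-\tilde{\de}$ in the role that $\frac{p}{p-\de_{\ve}}$ played in \cref{claim1}. First I would record the arithmetic identity
\[
    \mfq-1 = \frac{\de_{\ve}}{p-\de_{\ve}}-\tilde{\de} = \frac{\de_{\ve}-\tilde{\de}(p-\de_{\ve})}{p-\de_{\ve}} = 2\tilde{\be}_3,
\]
which, using the strict inequality $\frac{p}{p-\de_{\ve}}-\tilde{\de}>1$ from \eqref{5.4}, shows that $\tilde{\be}_3>0$, and which identifies the hypothesis $\be\in(0,\tilde{\be}_3)$ with the normalized smallness condition $\frac{\be}{\mfq-1}<\frac12$.

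Next I would write $-\be=\frac{\be}{\mfq-1}\lbr 1-\mfq\rbr$ and set $\om:=\mm(|\phi|\lsb{\chi}{\Om_T})^{\frac{\be}{\mfq-1}}$. Since $\frac{\be}{\mfq-1}\in\lbr 0,\frac12\rbr\subset(0,1)$, \cref{lem_weight_one} gives $\om\in A_1$ with $[\om]_{A_1}\leq\frac{C(n)}{1-\frac{\be}{\mfq-1}}\leq 2C(n)$; then \cref{lem_weight_two} applied with $q=\mfq$ yields $\om^{1-\mfq}=\mm(|\phi|\lsb{\chi}{\Om_T})^{-\be}\in A_{\mfq}$ with
\[
    \left[\mm(|\phi|\lsb{\chi}{\Om_T})^{-\be}\right]_{A_{\mfq}} \leq [\om]_{A_1}^{\mfq-1} \leq \lbr 2C(n)\rbr^{\mfq-1}.
\]
To replace the right-hand side by a purely dimensional constant I would invoke $\mfq-1<\frac{p}{p-\de_{\ve}}-1=\frac{\de_{\ve}}{p-\de_{\ve}}\leq 1$ --- the last inequality being the smallness of $\de_{\ve}$ already in force for \cref{claim1} --- so that $\lbr 2C(n)\rbr^{\mfq-1}\leq 2C(n)$, which is the asserted bound.

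I do not expect a genuine obstacle here: the statement is, exactly like \cref{claim1} and \cref{claim2}, a one-line consequence of \cref{lem_weight_one} and \cref{lem_weight_two} together with the bookkeeping of the thresholds $\tilde{\be}_1,\tilde{\be}_2,\tilde{\be}_3$. The only two points deserving a moment's attention are (i) that $\tilde{\be}_3>0$, so the restriction on $\be$ is not vacuous, and (ii) that $\mfq-1\leq 1$, so the resulting $A_{\mfq}$-constant is dimensional and in particular independent of $\phi$, $\be$ and $\de_{\ve}$; both follow immediately from \eqref{5.4} and the standing smallness of $\de_{\ve}$. Combined with \eqref{self_impr_bound}, the upshot is that $\mm(|\phi|\lsb{\chi}{\Om_T})^{-\be}$ lies in $A_{\mfq}$ with a constant uniform in $\phi$, $\be$ and $\de_{\ve}$, which is precisely the input needed for the Calder\'on--Zygmund covering argument that follows.
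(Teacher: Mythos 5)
Your proof is correct and follows essentially the same argument as the paper: factor the exponent as $-\be = \frac{\be}{\mfq-1}(1-\mfq)$ with $\mfq := \frac{p}{p-\de_\ve}-\tilde\de$, apply \cref{lem_weight_one} and \cref{lem_weight_two}, and use $\be<\tilde\be_3 \Leftrightarrow \frac{\be}{\mfq-1}<\frac12$ together with the dimensional bound on $\mfq-1$. The only cosmetic difference is that the paper bounds $\mfq-1 \le 2$ whereas you observe $\mfq-1 < \frac{\de_\ve}{p-\de_\ve} \le 1$; both suffice to make the final constant depend only on $n$.
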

     \begin{proof}[Proof of \cref{claim3}]
         We have the following sequence of estimates:
         \[
            \begin{array}{rcl}
                \left[\mm(|\phi|\lsb{\chi}{\Om_T})^{-\be}\right]_{A_{\frac{p}{p-\de_{\ve}}-\tilde{\de}}} & = & \left[\mm(|\phi|\lsb{\chi}{\Om_T})^{\frac{\be}{\frac{p}{p-\de_{\ve}}-\tilde{\de}-1}\lbr 1-\frac{p}{p-\de_{\ve}}+\tilde{\de}\rbr}\right]_{A_{\frac{p}{p-\de_{\ve}}}}\\
                & \overset{\text{\cref{lem_weight_two}}}{\leq} & \left[\mm(|\phi|\lsb{\chi}{\Om_T})^{\frac{\be}{\frac{p}{p-\de_{\ve}}-\tilde{\de}-1}}\right]_{A_{1}}^{\lbr \frac{p}{p-\de_{\ve}}-\tilde{\de}-1\rbr}\\
                & \overset{\text{\cref{lem_weight_one}}}{\leq} & \lbr \frac{C(n)}{1-\frac{\be}{\frac{p}{p-\de_{\ve}}-\tilde{\de}-1}}\rbr^{\lbr \frac{\de_{\ve}}{p-\de_{\ve}}-\tilde{\de}\rbr}\\
                & \overset{\redlabel{cl3}{a}}{\leq} & C(n).
            \end{array}
        \]
        To obtain \redref{cl3}{a}, we used the hypothesis which implies $\frac{\be}{\frac{p}{p-\de_{\ve}}-\tilde{\de}-1} \leq \frac12$ along with the  bound $\frac{\de_{\ve}}{p-\de_{\ve}}-\tilde{\de} \leq 2$.
     \end{proof}

     Let us now take $\be \leq \min\{\tilde{\be}_1,\tilde{\be}_2,\tilde{\be}_3\}$ where $\tilde{\be}_1$ is such that \cref{claim1} holds, $\tilde{\be}_2$ is such that \cref{claim2} holds and $\tilde{\be}_3$ is such that \cref{claim3} holds. We now define
\begin{equation}\label{al_0_weight}
\alpha^\frac{p-\de_{\ve}}{d}:=\fiint_{\Om_T}\lv \na u\rv^{p-\de_{\ve}}+\left(\frac{\lv\vec{h}\rv+\lv\vec{f}\rv}{\ga}\right)^{p-\de_{\ve}}\ dz,
\end{equation}
where $d$ is defined to be
\[
d:=
\begin{cases}
\frac{p-\de_{\ve}}{2-\de_{\ve}}&\text{ if }p\ge2,\\
\frac{2(p-\de_{\ve})}{2p-2\de_{\ve}+np-2n}&\text{ if }\frac{2n}{n+2}<p<2,
\end{cases}
\]
and for any $\la>0$, denote
\begin{equation}\label{def_om}
E_\la:=\{z\in\Om_T\mid \lv \na u(z)\rv>\la\}\txt{ and }\om(z):=\mm(|\phi|\lsb{\chi}{\Om_T})^{-\beta}(z).
\end{equation}

We have the following important lemma proved in \cite[Lemma 7.2]{adimurthi2018gradient}:
\begin{lemma}
    \label{lemma5.7}
    Let $\gamma\in(0,1)$ be any constant and $S_0$ be from \cref{definition_assumption}, then for any $\la\ge c_e\alpha$ where 
\[
c_e:=\left[\left(\frac{16}{7}\right)^n\frac{\lv \Om_T\rv}{\lv B_1\rv S_0^{n+2}}\right], 
\]
there exists a family of disjoint cylinders $\{K_{r_i}^\la(z_i)\}_{i\in\mathbb{N}}$ with $z_i\in E_\la$ and $r_i\in (0,S_0)$ such that
\begin{equation}\label{bounds_many}\begin{array}{c}
\fiint_{K_{r_i}^{\la}(z_i)}  |\nabla u|^{p-\de_{\ve}} + \left(\frac{\lv\vec{h}\rv+\lv\vec{f}\rv}{\ga}\right)^{p-\de_{\ve}}\ dz = \la^{p-\de_{\ve}}, \\
\fiint_{K_{r}^{\la}(z_i)}  |\nabla u|^{p-\de_{\ve}}+ \left(\frac{\lv\vec{h}\rv+\lv\vec{f}\rv}{\ga}\right)^{p-\de_{\ve}}  \ dz < \la^{p-\de_{\ve}} \qquad \text{for every} \ r> r_i, \\
\elam \subset \bigcup_{i\in\NN} K_{5r_i}^{\la}(z_i).
\end{array}\end{equation}

\end{lemma}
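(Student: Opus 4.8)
The plan is to run the parabolic stopping-time (Calderón--Zygmund) covering argument on intrinsic cylinders, which is precisely the content of \cite[Lemma 7.2]{adimurthi2018gradient}; I outline the steps. First I would extend $\lv\na u\rv$, $\lv\vec h\rv$ and $\lv\vec f\rv$ by zero outside $\Om_T$ and introduce, for $z\in\RR^{n+1}$ and $r>0$, the radial quantity
\[
\Psi(z,r):=\fiint_{K_r^{\la}(z)}\lv\na u\rv^{p-\de_{\ve}}+\left(\frac{\lv\vec{h}\rv+\lv\vec{f}\rv}{\ga}\right)^{p-\de_{\ve}}\ d\zeta .
\]
Three elementary facts drive everything: (i) for fixed $z$ the map $r\mapsto\Psi(z,r)$ is continuous, by absolute continuity of the integral together with continuity of $r\mapsto\lv K_r^{\la}(z)\rv$; (ii) at every Lebesgue point of the integrand --- hence at a.e.\ $z\in\elam$ --- one has $\Psi(z,r)\to\lv\na u(z)\rv^{p-\de_{\ve}}+(\cdots)(z)>\la^{p-\de_{\ve}}$ as $r\downarrow0$ (for fixed $\la$ the intrinsic cylinders $K_r^{\la}(z)$ are balls of a fixed parabolic quasi-metric, so the Lebesgue differentiation theorem applies); and (iii) $\Psi(z,r)\le\lv K_r^{\la}(z)\rv^{-1}\iint_{\Om_T}(\cdots)=\frac{\lv\Om_T\rv}{\lv K_r^{\la}(z)\rv}\,\alpha^{(p-\de_{\ve})/d}$ by \cref{al_0_weight}, the right-hand side being decreasing in $r$.

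The next step is the large-scale bound: using (iii) and the explicit scaling of $\lv K_r^{\la}(z)\rv$ in both $r$ and the intrinsic parameter $\la$, I would verify that the hypothesis $\la\ge c_e\alpha$, with $c_e$ the constant in the statement, forces $\Psi(z,r)<\la^{p-\de_{\ve}}$ for every $r\ge S_0$. Granting this, for a.e.\ $z\in\elam$ the set $\{r\in(0,S_0):\Psi(z,r)=\la^{p-\de_{\ve}}\}$ is nonempty by (i), (ii) and the intermediate value theorem; I would take $r_z$ to be its supremum, so that $r_z\in(0,S_0)$, $\Psi(z,r_z)=\la^{p-\de_{\ve}}$ by continuity, and $\Psi(z,r)<\la^{p-\de_{\ve}}$ for all $r>r_z$ (for $r<S_0$ by maximality of $r_z$, for $r\ge S_0$ by the large-scale bound). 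This produces the first two identities in \cref{bounds_many} for each cylinder $K_{r_z}^{\la}(z)$. It then remains to extract a disjoint subfamily: the collection $\{K_{r_z}^{\la}(z)\}_z$, $z$ ranging over Lebesgue points of the integrand in $\elam$, covers a.e.\ point of $\elam$ with radii in $(0,S_0)$, and since for fixed $\la$ these cylinders are uniformly doubling balls of a quasi-metric on $\RR^{n+1}$, the Vitali $5r$-covering lemma yields a countable pairwise disjoint subfamily $\{K_{r_i}^{\la}(z_i)\}_{i\in\NN}$ with $z_i\in\elam$, $r_i\in(0,S_0)$, such that every $K_{r_z}^{\la}(z)$ lies inside some $K_{5r_i}^{\la}(z_i)$; hence $\elam\subset\bigcup_{i\in\NN}K_{5r_i}^{\la}(z_i)$ up to a null set, which is all that is used downstream. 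This is the third relation in \cref{bounds_many}.

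I expect the only genuine obstacle to be the large-scale bound in the second step. One must keep careful track of how $\lv K_r^{\la}(z)\rv$ depends on $r$ and $\la$ and check that the precise value of $c_e$, together with the two branches of the exponent $d$ --- the degenerate branch $d=\frac{p-\de_{\ve}}{2-\de_{\ve}}$ when $p\ge2$ and the singular branch $d=\frac{2(p-\de_{\ve})}{2p-2\de_{\ve}+np-2n}$ when $\frac{2n}{n+2}<p<2$ --- make the powers of $\la$ and $\alpha$ cancel in exactly the right way; the singular regime, where the intrinsic time-scale of $K_r^{\la}$ grows with $\la$, is the delicate case, and is one of the reasons the restriction $p>\frac{2n}{n+2}$ enters (it is what keeps $d$ positive and finite). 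The remaining ingredients --- continuity of $\Psi(z,\cdot)$, the Lebesgue differentiation theorem, and the $5r$-covering lemma in the parabolic quasi-metric --- are routine.
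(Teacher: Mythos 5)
Your proposal is correct and follows exactly the stopping-time (exit-radius) plus Vitali $5r$-covering route used in \cite[Lemma 7.2]{adimurthi2018gradient}, which the paper cites here without reproving. You have also correctly isolated the only non-routine step, namely verifying that the choice of $c_e$ together with the branch of $d$ (and the intrinsic time-scaling $\la^{2-p}$ in $\lv K_r^{\la}\rv$) makes the cylinder averages fall below $\la^{p-\de_{\ve}}$ once $r\ge S_0$ and $\la\ge c_e\al$.
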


Using the previous lemma, we have the following important weighted estimate:
\begin{lemma}\label{5.7}
There exists a constant $c^*=c^*(n,p)$ such that 
\[
 \om(K_{r_i}^{\la}(z_i)) \leq \frac{C(n,p)}{\la^{p-(p-\de_{\ve})\tilde{\de}}}   \lbr[[] \iint\limits_{K_{r_i}^{\la}(z_i) \cap \{|\nabla u| > \frac{\la}{4c^{\ast}}\}} |\nabla u|^{p-(p-\de_{\ve})\tilde{\de}} \om \ dz + \iint\limits_{K_{r_i}^{\la}(z_i) \cap \{|\vec{h} |+|\vec{f} | > \frac{\ga \la}{4c^{\ast}}\}} \left(\frac{\lv\vec{h}\rv+\lv\vec{f}\rv}{\ga}\right) ^{p-(p-\de_{\ve})\tilde{\de}} \om \ dz\rbr[]].
\]
\end{lemma}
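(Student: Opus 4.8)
The plan is to derive the inequality from the intrinsic energy identity in the first line of \eqref{bounds_many}, by testing the Muckenhoupt condition of $\om$ against the rescaled energy density; the crucial input is the self-improved bound from \cref{claim3}. Write $q := \frac{p}{p-\de_{\ve}} - \tilde{\de}$ and $\sigma := p - (p-\de_{\ve})\tilde{\de}$, so that $\sigma = (p-\de_{\ve})q$. By \eqref{5.4} one has $1 < q < \frac{p}{p-\de_{\ve}}$, hence $\sigma > 1$, and in addition $q - 1 < \frac{\de_{\ve}}{p-\de_{\ve}} < 1$, the last bound because \eqref{restriction_p} forces $\de_{\ve} < p/2$ both when $p \geq 2$ and when $\frac{2n}{n+2} < p < 2$; in particular $2^{q-1} \leq 2$. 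By \cref{claim3} together with \eqref{self_impr_bound}, $\om \in A_{q}$ with $[\om]_{A_q} \leq C(n)$. This uniform membership — independent of $\phi$, $\be$ and $\la$ — is precisely what \cref{claim1,claim2,claim3} were designed to furnish, and is what keeps every constant below universal.

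First I would apply \cref{weight_lemma}. Each intrinsic cylinder $K_{r_i}^{\la}(z_i)$ is a parabolic cylinder $Q_{\rho,s}$ in the sense of \cref{muck_weight} — a ball in $x$ times an interval in $t$, with the two scales decoupled, the intrinsic scaling merely altering their ratio — so \cref{weight_lemma} on $Q = K_{r_i}^{\la}(z_i)$ with exponent $q$ and the nonnegative function
\[
 f := |\nabla u|^{p-\de_{\ve}} + \left( \frac{|\vec{h}| + |\vec{f}|}{\ga} \right)^{p-\de_{\ve}}
\]
(with $\nabla u$, $\vec{h}$, $\vec{f}$ extended by zero outside $\Om_T$) yields, after rearranging,
\[
 \om\bigl( K_{r_i}^{\la}(z_i) \bigr) \left( \fiint_{K_{r_i}^{\la}(z_i)} f \ dz \right)^{q} \leq C(n) \iint_{K_{r_i}^{\la}(z_i)} f^{q}\, \om \ dz .
\]
By the first identity in \eqref{bounds_many}, $\fiint_{K_{r_i}^{\la}(z_i)} f\, dz = \la^{p-\de_{\ve}}$, so the left-hand side equals $\la^{\sigma}\om\bigl(K_{r_i}^{\la}(z_i)\bigr)$. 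Next I would use convexity and $2^{q-1}\leq 2$ to get $f^{q} \leq 2|\nabla u|^{\sigma} + 2\bigl(\frac{|\vec{h}|+|\vec{f}|}{\ga}\bigr)^{\sigma}$, and split each of the two resulting integrals over $K_{r_i}^{\la}(z_i)$ according to whether $|\nabla u| > \frac{\la}{4c^{*}}$ (respectively $|\vec{h}|+|\vec{f}| > \frac{\ga\la}{4c^{*}}$); on the complementary sets the integrands are bounded by $\bigl(\frac{\la}{4c^{*}}\bigr)^{\sigma}$, so these pieces contribute at most $C(n)\bigl(\frac{\la}{4c^{*}}\bigr)^{\sigma}\om\bigl(K_{r_i}^{\la}(z_i)\bigr)$.

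Altogether this bounds $\la^{\sigma}\om\bigl(K_{r_i}^{\la}(z_i)\bigr)$ by $C(n)$ times the sum of the two asserted integrals plus $C(n)\bigl(\frac{\la}{4c^{*}}\bigr)^{\sigma}\om\bigl(K_{r_i}^{\la}(z_i)\bigr)$. The final step is to fix $c^{*} = c^{*}(n,p)$ large enough that $C(n)(4c^{*})^{-\sigma} \leq \frac12$ — legitimate since $\sigma > 1$ and $C(n)$ does not depend on $\la$, $c^{*}$, $\phi$ or $\be$ — absorb the last term into the left-hand side, and divide through by $\la^{\sigma}$; recalling $\sigma = p - (p-\de_{\ve})\tilde{\de}$, this is exactly the assertion, with overall constant of the form $C(n,p)$. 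I expect the one real obstacle to be ensuring that this constant, and hence $c^{*}$, is genuinely universal and not tacitly dependent on $\phi$, $\la$ or $\be$; this is exactly what \cref{claim3} secures, its whole role being to pin $\om$ in a fixed class $A_{q}$ with $1 < q$ bounded strictly away from $\frac{p}{p-\de_{\ve}}$, uniformly in those parameters.
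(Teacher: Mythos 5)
Your proof is correct and follows essentially the same route as the paper: you invoke \cref{weight_lemma} with exponent $q=\frac{p}{p-\de_\ve}-\tilde{\de}$ and the $[\om]_{A_q}\le C(n)$ bound from \cref{claim3}, which is exactly what the paper's \cref{eq5.7}--\cref{eq5.8} carry out by hand via H\"older plus the $A_q$ definition, before splitting at the threshold $\la/(4c^*)$ and absorbing. The one side-remark that is off is the claim that \cref{restriction_p} forces $\de_\ve<p/2$ (and hence $2^{q-1}\le 2$); this is not asserted anywhere in the paper, but it is also unnecessary, since \cref{claim3} already records $q-1=\frac{\de_\ve}{p-\de_\ve}-\tilde{\de}\le 2$, which keeps the convexity constant $2^{q-1}$ universal.
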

\begin{proof}
    Applying H\"older's inequality, we have
    \begin{equation}\label{eq5.7}
\begin{array}{rcl}
\fiint_{K_{r_i}^{\la}(z_i)}|\nabla u|^{p-\de_{\ve}} \ dz & = & \fiint_{K_{r_i}^{\la}(z_i)}|\nabla u|^{p-\de_{\ve}}\mm(\lv \phi\rv\rchi_{\Om_T})^{-\beta\frac{p-\de_{\ve}}{p-(p-\de_{\ve})\tilde{\de}}}\mm(\lv \phi\rv\rchi_{\Om_T})^{\beta\frac{p-\de_{\ve}}{p-(p-\de_{\ve})\tilde{\de}}} \ dz\vsp\\
&\le &  \left(\fiint_{K_{r_i}^{\la}(z_i)}|\nabla u|^{p-(p-\de_{\ve})\tilde{\de}}\mm(\lv \phi\rv\rchi_{\Om_T})^{-\beta}\ dz\right)^\frac{p-\de_{\ve}}{p-(p-\de_{\ve})\tilde{\de}} \\
&& \qquad \times\left(\fiint_{K_{r_i}^{\la}(z_i)}\mm(\lv \phi\rv\rchi_{\Om_T})^\frac{\beta(p-\de_{\ve})}{\de_{\ve}-(p-\de_{\ve})\tilde{\de}}\ dz\right)^\frac{\de_{\ve}-(p-\de_{\ve})\tilde{\de}}{p-(p-\de_{\ve})\tilde{\de}}.
\end{array}
\end{equation}

From \cref{claim3} and \cref{muck_weight}, we see that 
\begin{equation}
    \label{eq5.8}
    \begin{array}{rcl}
\left(\fiint_{K_{r_i}^{\la}(z_i)}\mm(\lv \phi\rv\rchi_{\Om_T})^\frac{\beta(p-\delta_{\ve})}{\delta_{\ve}-(p-\delta_{\ve})\tilde{\de}}\ dz\right)^\frac{\delta_{\ve}-(p-\delta_{\ve})\tilde{\de}}{p-\delta_{\ve}}&\le &  \left[\mm(|\phi|\lsb{\chi}{\Om_T})^{-\beta}\right]_{A_{\frac{p}{p-\delta_{\ve}}-\tilde{\de}}}\left(\fiint_{K_{r_i}^{\la}(z_i)}\mm(\lv \phi\rv\rchi_{\Om_T})^{-\beta}\ dz\right)^{-1}\vsp\\
&\le & C(n,p) \frac{|K_{r_i}^\la(z_i)|}{\om(K_{r_i}^\la(z_i))}.
\end{array}
\end{equation}
Analogously, we can estimate $\fiint_{K_{r_i}^{\la}(z_i)}\left(\frac{\lv\vec{h}\rv+\lv\vec{f}\rv}{\ga}\right)^{p-\de_{\ve}} \ dz$. Thus combining \cref{eq5.7} and \cref{eq5.8} with \cref{bounds_many}, we get
\begin{equation}\label{eq5.9}
\begin{array}{ll}
&\la^{p-(p-\delta_{\ve})\tilde{\de}}\om(K_{r_i}^\la(z_i))\le C(n,p)\iint_{K_{r_i}^{\la}(z_i)}\lbr |\nabla u|^{p-(p-\delta_{\ve})\tilde{\de}}+\left(\frac{\lv\vec{h}\rv+\lv\vec{f}\rv}{\ga}\right)^{p-(p-\delta_{\ve})\tilde{\de}}\rbr\om\ dz.
\end{array}
\end{equation}
We take  
\begin{equation}\label{def_c_star}
    \tilde{c}_* := C(n,p)^\frac{1}{p-(p-\de_{\ve})\tilde{\de}} \leq C(n,p)^p =:c^*,
\end{equation}
then from a simple calculation, we see that
\begin{equation}\label{eq5.10}
\begin{array}{rcl}
\iint\limits_{K_{r_i}^{\la}(z_i)} |\nabla u|^{p-(p-\delta_{\ve})\tilde{\de}} \om\ dz
&  \le &    \left(\frac{\la}{4\tilde{c}_*}\right)^{p-(p-\delta_{\ve})\tilde{\de}}\om(K_{r_i}^\la(z_i))+\iint\limits_{K_{r_i}^{\la}(z_i)\cap\{\lv \na u\rv>\frac{\la}{4\tilde{c}_*}\}}|\nabla u|^{p-(p-\delta_{\ve})\tilde{\de}} \om\ dz \\
&  \overset{\cref{def_c_star}}{\le} &    \left(\frac{\la}{4\tilde{c}_*}\right)^{p-(p-\delta_{\ve})\tilde{\de}}\om(K_{r_i}^\la(z_i))+\iint\limits_{K_{r_i}^{\la}(z_i)\cap\{\lv \na u\rv>\frac{\la}{4c^*}\}}|\nabla u|^{p-(p-\delta_{\ve})\tilde{\de}} \om\ dz.
 \end{array}
\end{equation}
An analogous estimate also holds for the second term on the right hand side of \cref{eq5.9}. Thus combining \cref{eq5.9} with \cref{eq5.10}, the proof of the lemma follows.
\end{proof}

We now have the following important lemma proved in \cite[Lemma 4.3]{byun2013weighted}.
\begin{lemma}
\label{lemma7.4}
For any $\la \geq c_e \al$ as in \cref{lemma5.7}, there exists a constant $N = N(\lamot,n,p)>1$ such that for any $\ve \in (0,1)$, there exists a small $\ga = \ga(\lamot,\ve,n,p)$ such that if $(\aa,\Om)$ is $(\ga,S_0)$-vanishing in the sense of \cref{definition_assumption}, then there holds
\[
\frac{\abs{\{z \in K_{5r_i}^{\la}(z_i): |\nabla u(z)| > 2N\la\}}}{|K_{r_i}^{\la}(z_i)|} \leq c_{(\lamot,n,p)} \ve^{p-1}.
\]
\end{lemma}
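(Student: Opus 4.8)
\emph{Plan of proof.} The plan is to run the Calder\'on--Zygmund comparison scheme, adapted to \emph{very weak} solutions, using the intrinsic stopping-time cylinders of \cref{lemma5.7} together with the comparison estimates below the natural exponent from \cite{adimurthi2018gradient}; the argument should parallel \cite[Lemma 4.3]{byun2013weighted}. Fix one of the cylinders $K_{r_i}^{\la}(z_i)$ produced by \cref{lemma5.7}. Since $5r_i>r_i$, the second line of \eqref{bounds_many} gives
\[
\fiint_{K_{5r_i}^{\la}(z_i)} |\na u|^{p-\de_\ve} + \lbr \tfrac{|\vec{h}|+|\vec{f}|}{\ga}\rbr^{p-\de_\ve}\, dz < \la^{p-\de_\ve},
\]
so on $K_{5r_i}^{\la}(z_i)$ the gradient $\na u$ sits at scale $\la$ in $L^{p-\de_\ve}$-average, the datum $|\vec{f}|+|\vec{h}|$ at the much smaller scale $\ga\la$, and, by the $(\ga,S_0)$-vanishing hypothesis \eqref{small_aa} and the remark after \cref{definition_assumption}, the $x$-oscillation of $\aa$ on this cylinder is controlled by $\ga$. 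It is convenient to first substitute $u\mapsto u+h$ (legitimate by \cref{h_hyp_2}), turning \eqref{main} into an equation whose operator no longer involves $\vec{h}$ and whose right-hand side is the divergence of an $L^{p-\de_\ve}$ field of size $\ga\la$ on this cylinder; I keep writing $u,\aa$ for the transformed objects. All concentric enlargements of $K_{5r_i}^{\la}(z_i)$ used below as ambient domains have radius below $S_0$, by the construction in \cref{lemma5.7}.

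\emph{First comparison.} I would let $v$ solve the homogeneous problem $v_t-\dv\aa(x,t,\na v)=0$ on $K_{5r_i}^{\la}(z_i)$ with $v=u$ on the parabolic boundary (and $v=0$ on the part lying on $\pa_p\Om_T$ in the boundary case). Since $p-\de_\ve$ is below the natural exponent, $u-v$ is not an admissible test function, so the comparison has to be effected through the parabolic Lipschitz truncation of \cite{KL}; its outcome is exactly \cite[Theorem 5.6]{adimurthi2018gradient} applied with $\de_\ve$, which, combined with the stopping-time estimate above, should give
\[
\fiint_{K_{5r_i}^{\la}(z_i)} |\na u-\na v|^{p-\de_\ve}\, dz \lesssim_{(\lamot,n,p)} (\ga\la)^{p-\de_\ve}.
\]
\emph{Second comparison.} I would then let $w$ solve the problem with the $x$-averaged nonlinearity $\aaa_{B_{5r_i}(\mfx_i)}(t,\cdot)$ (writing $z_i=:(\mfx_i,\mft_i)$) on $K_{5r_i}^{\la}(z_i)$ with $w=v$ on the parabolic boundary. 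Using $|\Theta(\aa,\cdot)|\le 2\La_1$ and the integrated smallness in \eqref{small_aa} — and, near the boundary, the Reifenberg flatness of \cref{reif_flat} and the measure density \cref{measure_density} to flatten $\pa\Om$ — the corresponding estimate \cite[Theorem 5.7]{adimurthi2018gradient} should yield
\[
\fiint_{K_{5r_i}^{\la}(z_i)} |\na v-\na w|^{p-\de_\ve}\, dz \lesssim_{(\lamot,n,p)} \ga^{\sigma}\la^{p-\de_\ve}
\]
for some $\sigma=\sigma(n,p,\de_\ve)>0$.

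\emph{Reference regularity and conclusion.} The equation for $w$ has no $x$-dependence (and near the boundary is posed on a half-space after flattening), so I would apply the standard interior and boundary Lipschitz estimates for such equations; combining them with the two displays just obtained to absorb $|\na u-\na v|$ and $|\na v-\na w|$ gives, on $K_{5r_i}^{\la}(z_i)$,
\[
\norm{\na w}_{L^\infty(K_{5r_i}^{\la}(z_i))}^{p-\de_\ve} \lesssim_{(\lamot,n,p)} \fiint_{K_{5r_i}^{\la}(z_i)} |\na w|^{p-\de_\ve}\, dz \lesssim_{(\lamot,n,p)} \la^{p-\de_\ve}.
\]
Taking $N=N(\lamot,n,p)>1$ to be twice the constant in this bound, one has $|\na w|\le\tfrac N2\la$ on $K_{5r_i}^{\la}(z_i)$, hence $\{|\na u|>2N\la\}\subset\{|\na u-\na w|>N\la\}$ there. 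Chebyshev's inequality, $|\na u-\na w|^{p-\de_\ve}\lesssim|\na u-\na v|^{p-\de_\ve}+|\na v-\na w|^{p-\de_\ve}$, the two comparison estimates, and $|K_{5r_i}^{\la}(z_i)|=5^{n+2}|K_{r_i}^{\la}(z_i)|$ then give
\[
\frac{|\{z\in K_{5r_i}^{\la}(z_i):|\na u(z)|>2N\la\}|}{|K_{r_i}^{\la}(z_i)|} \lesssim_{(\lamot,n,p)} \frac{(\ga\la)^{p-\de_\ve}+\ga^{\sigma}\la^{p-\de_\ve}}{(N\la)^{p-\de_\ve}} \lesssim_{(\lamot,n,p)} \ga^{\sigma'},
\]
with $\sigma'=\sigma'(n,p,\de_\ve)>0$. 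As $N$ and the implied constant depend only on $\lamot,n,p$, it then suffices to choose $\ga=\ga(\lamot,\ve,n,p)$ small enough that the right-hand side is $\le c_{(\lamot,n,p)}\ve^{p-1}$, which finishes the proof.

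\emph{Main obstacle.} The step I expect to be the main obstacle is the first comparison. With $\na u$ merely in $L^{p-\de_\ve}$, below the natural exponent, testing with $u-v$ is not allowed, and the Lipschitz-truncation substitute inevitably produces an error term of the form $\theta\fiint_{K_{5r_i}^{\la}(z_i)}|\na u|^{p-\de_\ve}$ that has to be reabsorbed on the left-hand side; the admissible size of $\de_\ve$ — equivalently, how small $\be$ is forced to be — is precisely what controls whether this absorption succeeds, which is why $\de_\ve$ is tied to $\ve$ through \cite[Theorems 5.6 and 5.7]{adimurthi2018gradient} and taken as large as those results permit.
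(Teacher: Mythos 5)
Your proposal captures the structure the paper intends: the lemma is only cited to \cite[Lemma 4.3]{byun2013weighted}, and the sub-natural-exponent inputs it needs are precisely \cite[Theorems 5.6 and 5.7]{adimurthi2018gradient}, which you correctly deploy in a two-step comparison $u\to v\to w$ followed by an interior/boundary Lipschitz bound for $w$, the choice $N$ as (twice) the Lipschitz constant, and a Chebyshev estimate on $\{|\nabla u - \nabla w|>N\la\}$. Your identification of the reabsorption problem in the first comparison as the real obstruction, and of $\de_\ve$ as the parameter controlling it, is also accurate.

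One detail, however, should be corrected. The preliminary substitution $u\mapsto u+h$ does not produce a right-hand side ``of size $\ga\la$ on this cylinder.'' After substituting one gets $(u+h)_t-\dv\aa(x,t,\nabla(u+h))=-\dv(|\vec f|^{p-2}\vec f)-h_t$, and by \cref{h_hyp_2} the extra term is $-\dv(|\vec H|^{p-2}\vec H)$; but the exit-time normalization \eqref{bounds_many} bounds $\fiint_{K^\la_{5r_i}}|\vec h|^{p-\de_\ve}$, not $\fiint_{K^\la_{5r_i}}|\vec H|^{p-\de_\ve}$, so the new source $\vec H$ is not at scale $\ga\la$ on $K_{5r_i}^\la(z_i)$ and cannot be absorbed by the same Chebyshev step. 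This is why the comparison in \cite{adimurthi2018gradient} keeps $\vec h$ inside the operator rather than eliminating it by substitution: the reference problem for $v$ retains the form $v_t-\dv\aa(x,t,\nabla v+\vec h)=0$ (with the frozen version $\aaa_{B}(t,\nabla w+\vec h)$ for $w$), so that $\vec h$ enters the comparison only through its $L^{p-\de_\ve}$-average on $K_{5r_i}^\la$, which \eqref{bounds_many} does bound by $\ga\la$. With that adjustment, the rest of your argument — the two comparison displays, the Lipschitz bound for $w$, the choice of $N=N(\lamot,n,p)$, and the final selection of $\ga=\ga(\lamot,\ve,n,p)$ small enough that the right-hand side is at most $c_{(\lamot,n,p)}\ve^{p-1}$ — proceeds as you describe.
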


We now have the following important weighted estimates on the level sets:
\begin{lemma}\label{lemma5.11}
Let the notation from  \cref{def_om} be in force and $c^*$ be as obtained in \cref{5.7}. Furthermore let $N$ be given from \cref{lemma7.4}, then for any $\la \geq c_e \al$ as in \cref{lemma5.7} and  for any $\ve \in (0,1)$, there holds
\[
\om(E_{2N\la})\apprle_{(n,p,\lamot)}\frac{\ve^{(p-1)\tau_1}}{\la^{p-(p-\delta_{\ve})\tilde{\de}}}\left[\iint\limits_{\Om_T\cap\{\lv \na u\rv>\frac{\la}{4c^*}\}}\lv \na u \rv^{p-(p-\delta_{\ve})\tilde{\de}}\om\ dz+\iint\limits_{\Om_T \cap \{|\vec{h} |+|\vec{f} | > \frac{\ga \la}{4c^{\ast}}\}} \left(\frac{\lv\vec{h}\rv+\lv\vec{f}\rv}{\ga}\right) ^{p-(p-\delta_{\ve})\tilde{\de}} \om \ dz\right].
\]
Here $\tau_1=\tau_1(n,p)$ is the exponent from \cref{a_infinity} applied with \cref{claim2} under consideration.
\end{lemma}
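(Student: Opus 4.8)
The plan is to combine the disjoint covering from \cref{lemma5.7}, the local weighted bound from \cref{5.7}, the measure-theoretic decay from \cref{lemma7.4}, and the $A_\infty$-comparison of weighted and unweighted measures from \cref{a_infinity} (with the uniform $A_p$-bound of \cref{claim2} supplying the constants $\tau_0,\tau_1$). First I would fix $\la \geq c_e\al$ and use \cref{lemma5.7} to obtain the disjoint family $\{K_{r_i}^\la(z_i)\}_{i\in\NN}$ with $E_\la \subset \bigcup_i K_{5r_i}^\la(z_i)$. Since $E_{2N\la} \subset E_\la$, this covers $E_{2N\la}$ as well, and by subadditivity of the weight
\[
\om(E_{2N\la}) \leq \sum_{i\in\NN} \om\big(K_{5r_i}^\la(z_i) \cap \{|\nabla u| > 2N\la\}\big).
\]

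Next I would pass from $K_{5r_i}^\la(z_i)$ back to $K_{r_i}^\la(z_i)$ using the $A_\infty$ property. Apply \cref{a_infinity} on the cylinder $K_{5r_i}^\la(z_i)$ with $E = K_{5r_i}^\la(z_i) \cap \{|\nabla u| > 2N\la\}$: by \cref{lemma7.4}, $|E| \leq c\,\ve^{p-1}|K_{r_i}^\la(z_i)| \le c'\ve^{p-1}|K_{5r_i}^\la(z_i)|$ (the cylinders $K_{5r_i}^\la$ and $K_{r_i}^\la$ have comparable measure with a purely dimensional constant), hence
\[
\om\big(K_{5r_i}^\la(z_i) \cap \{|\nabla u| > 2N\la\}\big) \leq \tau_0 \big(c'\ve^{p-1}\big)^{\tau_1}\om\big(K_{5r_i}^\la(z_i)\big) \apprle_{(n,p,\lamot)} \ve^{(p-1)\tau_1}\,\om\big(K_{r_i}^\la(z_i)\big),
\]
where in the last step I again use the $A_\infty$-doubling of $\om$ to replace $\om(K_{5r_i}^\la(z_i))$ by a constant multiple of $\om(K_{r_i}^\la(z_i))$; here $\tau_0,\tau_1$ depend only on $n,p$ through the uniform $A_p$-constant bound of \cref{claim2} (or equivalently \cref{claim3}), which is crucial so that the constants do not blow up as $\ve\to0$.

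Then I would insert the local estimate from \cref{5.7}: for each $i$,
\[
\om(K_{r_i}^\la(z_i)) \leq \frac{C(n,p)}{\la^{p-(p-\de_{\ve})\tilde{\de}}}\left[\iint\limits_{K_{r_i}^{\la}(z_i) \cap \{|\nabla u| > \frac{\la}{4c^{\ast}}\}} |\nabla u|^{p-(p-\de_{\ve})\tilde{\de}} \om \ dz + \iint\limits_{K_{r_i}^{\la}(z_i) \cap \{|\vec{h} |+|\vec{f} | > \frac{\ga \la}{4c^{\ast}}\}} \left(\frac{\lv\vec{h}\rv+\lv\vec{f}\rv}{\ga}\right)^{p-(p-\de_{\ve})\tilde{\de}} \om \ dz\right].
\]
Summing over $i$ and using that the $K_{r_i}^\la(z_i)$ are pairwise disjoint and contained in $\Om_T$, the right-hand sides add up to integrals over $\Om_T \cap \{|\nabla u| > \frac{\la}{4c^*}\}$ and $\Om_T \cap \{|\vec h|+|\vec f| > \frac{\ga\la}{4c^*}\}$ respectively, which yields exactly the claimed inequality.

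The main obstacle is bookkeeping the constants: one must ensure that the exponent $\tau_1$ and the multiplicative constant hidden in $\apprle_{(n,p,\lamot)}$ depend only on $n,p$ (and $\lamot$ via $N$ and the $\ve^{p-1}$ factor in \cref{lemma7.4}), and \emph{not} on $\de_{\ve}$, $\tilde\de$, $\phi$, $\al$, or $\la$. This is precisely why \cref{claim2} (giving $[\om]_{A_p}\le C(n,p)$ uniformly) is invoked to fix $\tau_0,\tau_1$ rather than \cref{claim3}; the uniform $A_p$-bound also supplies the doubling constant for the passage $K_{5r_i}^\la \leadsto K_{r_i}^\la$. A minor technical point is that \cref{lemma7.4} and \cref{5.7} both require $\be \le \min\{\tilde\be_1,\tilde\be_2,\tilde\be_3\}$ and the smallness of $\ga$, which is already part of the standing hypotheses at this point in the section, so nothing new needs to be assumed.
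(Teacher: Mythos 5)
Your proposal is correct and follows essentially the same route as the paper: decompose $E_{2N\la}$ via the covering from \cref{lemma5.7}, apply the $A_\infty$/$A_p$ comparison (\cref{weight_lemma2} in the paper, \cref{a_infinity} in your write-up — these are equivalent here) together with the measure decay from \cref{lemma7.4} to pass from $\om(K_{5r_i}^\la\cap E_{2N\la})$ to $\ve^{(p-1)\tau_1}\om(K_{r_i}^\la)$, then insert \cref{5.7} and sum over the disjoint cylinders. You also correctly flag that the uniform $A_p$-constant bound of \cref{claim2} is what guarantees $\tau_0,\tau_1$ and the doubling constant depend only on $n,p$, which is exactly the point the paper records in the lemma statement.
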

\begin{proof}
    We observe that 
    \[
\begin{array}{rcl}
\om(E_{2N\la})
&\le &  \sum_{i\ge1}\om\lbr\{z\in K_{5r_i}^\la(z_i) : \lv \na u \rv>2N\la\}\rbr\\
&\overset{\text{\cref{weight_lemma2}}}{\leq}& C{(n,p)}\sum_{i\ge1}\left(\frac{\lv K_{5r_i}^\la\cap E_{2N\la}\rv}{\lv K_{r_i}^\la\rv}\right)^{\tau_1}\om(K_{r_i}^\la(z_i))\vsp\\
&\overset{\text{\cref{lemma7.4}}}{\leq}& C{(n,p,\lamot)}\ep^{(p-1)\tau_1}\sum_{i\ge 1}\om(K_{r_i}^\la(z_i)).
\end{array}
\]
Combining the above estimate with \cref{lemma5.7} gives the proof of the lemma.
\end{proof}

\subsection{Proof of \texorpdfstring{\cref{weighted_estimate}}.}
We are now ready to combine all the estimates to prove the main theorem:

Let $c_e$ be as given in \cref{lemma5.7} and $\al$ be from \cref{al_0_weight}, then from \cref{weight_level_set}, we get
\[
\begin{array}{rcl}
\iint_{\Om_T}\lv \na u \rv^p\om\ dz &= & p\int_0^{c_e\alpha_0}(2N\la)^{p-1}\om(\{z\in\Om_T\mid \lv \na u\rv>2N\la\})d(2N\la)\vsp\\
&& +p\int_{c_e\alpha_0}^\infty(2N\la)^{p-1}\om(\{z\in\Om_T\mid \lv \na u\rv>2N\la\})d(2N\la)\vsp\\
&=:& I+II.
\end{array}
\]
\begin{description}
    \item[Estimate for $I$:] This term is estimated as follows:
    \begin{equation*}
\begin{array}{rcl}
I& \apprle &  \om(\Om_T)\alpha^p \overset{\cref{bounds_many}}{=} \om(\Om_T) \lbr \fiint_{\Om_T} \lbr[[] |\nabla u|^{p-\de_{\ve}}  + \left(\frac{\lv\vec{h}\rv+\lv\vec{f}\rv}{\ga}\right)^{p-\de_{\ve}} \rbr[]] \ dz \rbr^{\frac{pd}{p-\de_{\ve}}}\\
&\overset{\text{\cref{unweighted_estimate}}}{\apprle} & \om(\Om_T)\left[\fiint_{\Om_T}\left(\lv \vec{h}\rv+\lv \vec{f}\rv\right)^{p-\delta_{\ve}}\ dz\right]^\frac{pd}{p-\de_{\ve}}\\
&\overset{\text{\cref{weight_lemma}}}{\apprle}& C{(n,p,\La_0,\La_1)}\om(\Om_T)^{1-d}\left[\iint_{\Om_T}\left(\lv \vec{h}\rv+\lv \vec{f}\rv\right)^{p}\om\ dz\right]^{d}\\
&\leq & C{(n,p,\La_0,\La_1,\om(\Om_T))}  \left[\iint_{\Om_T}\left(\lv \vec{h}\rv^p+\lv \vec{f}\rv^p+1\right)\om\ dz\right]^{d}.
\end{array}
\end{equation*}
Note that the constant in the above estimate is independent of $\de_{\ve}$ since we have $p-1 \leq p-\de_{\ve} \leq p$.

    \item[Estimate for $II$:] We estimate this term as follows:
    \begin{equation}\label{eq5.13}
\begin{array}{rcl}
II &\overset{\text{\cref{lemma5.11}}}{\apprle} &  \ep^{(p-1)\tau_1}\int_{c_e\alpha}^\infty\frac{\la^{p-1}}{\la^{p-(p-\delta_{\ve})\tilde{\de}}}\iint\limits_{\Om_T\cap\{\lv \na u\rv>\frac{\la}{4c^*}\}}\lv \na u \rv^{p-(p-\delta_{\ve})\tilde{\de}}\om \ dz\  d\la \\
&& +  \ep^{(p-1)\tau_1}\int_{c_e\alpha_0}^\infty\frac{\la^{p-1}}{\la^{p-(p-\delta_{\ve})\tilde{\de}}}\iint\limits_{\Om_T \cap \{|\vec{h} |+|\vec{f} | > \frac{\ga \la}{4c^{\ast}}\}} \left(\frac{\lv\vec{h}\rv+\lv\vec{f}\rv}{\ga}\right) ^{p-(p-\delta_{\ve})\tilde{\de}} \om \ dz\  d\la\\
&\leq & C{(n,p,\lamot)}  \left[\ep^{(p-1)\tau_1}\iint_{\Om_T}\lv \na u\rv^p\om\ dz+ C(\ve,n,p)\iint_{\Om_T}\left(\lv\vec{h}\rv+\lv \vec{f}\rv \right)^p\om\ dz\right].
\end{array}
\end{equation}
We now choose $\ve = \ve(n,p,\lamot)$ small such that $C(n,p,\lamot) \ve^{(p-1)\tau_1} = \frac12$ where $C(n,p,\lamot)$ is the constant appearing in the above inequality.
\end{description}

Once we choose $\ve = \ve(n,p,\lamot)$ based on \cref{eq5.13}, this fixes the choice of $\de_{\ve} = \de_{\ve}(n,p,\lamot)$ which in turn fixes $\tilde{\de}$. Now we take 
\[
    \be_2 := \min\{\tilde{\be}_1,\tilde{\be}_2,\tilde{\be}_3,\de_{\ve}, \be_1\},
\]
where $\tilde{\be}_1$ is from \cref{claim1}, $\tilde{\be}_2$ is from \cref{claim2}, $\tilde{\be}_3$ is from \cref{claim3} and $\be_1$ is from \cref{unweighted_estimate}.
This completes the proof of the theorem.

\section{Modified compactness theory from \texorpdfstring{\cite{bulicek2018well}}. }
\label{section6}

Let us first recall a  Whitney type  decomposition Lemma proved in  \cite[Lemma 3.1]{diening2010existence} or \cite[Chapter 3]{bogelein2013regularity}:
\begin{lemma}
\label{whitney_decomposition}

Let $\mathbb{E}$ be any closed set and  $\la \in (0,\infty)$ be a fixed constant. Define $\kappa := \la^{2-p}$, then there exists an $\ka$-parabolic Whitney covering  $\{Q_i(z_i)\}$ of $\mathbb{E}^c$ in the following sense:
\begin{description}
  \descitem{W1} $Q_j(z_j) = B_j(x_j) \times I_j(t_j)$ where $B_j(x_j) = B_{r_j}(x_j)$ and $I_j(t_j) = (t_j - \ka r_j^2, t_j + \ka r_j^2)$. 
  \descitem{W2}  $d_{\la}(z_j,\mathbb{E}) = 16r_j$.
  \descitem{W3} $\bigcup_j \frac12 Q_j(z_j) = \mathbb{E}^c$.
  \descitem{W4} for all $j \in \NN$, we have $8Q_j \subset \mathbb{E}^c$ and $16Q_j \cap \mathbb{E} \neq \emptyset$.
  \descitem{W5} if $Q_j \cap Q_k \neq \emptyset$, then $\frac12 r_k \leq r_j \leq 2r_k$.
  \descitem{W6} $\frac14 Q_j \cap \frac14Q_k = \emptyset$ for all $j \neq k$.
  \descitem{W7} $\sum_j \lsb{\chi}{4Q_j}(z) \leq c(n)$ for all $z \in \mathbb{E}^c$.
    \end{description}

    For a fixed $k \in \NN$, let us define $A_k := \left\{ j \in \NN: \frac34Q_k \cap \frac34Q_j \neq \emptyset\right\}$,  then we have
    \begin{description}
  \descitem{W8} For any $i \in \NN$, we have $\# A_i \leq c(n)$.
  \descitem{W9} Let $i \in \NN$ be given and let  $j \in A_i$, then $\max \{ |Q_j|, |Q_i|\} \leq C(n) |Q_j \cap Q_i|.$
  \descitem{W10}  Let $i \in \NN$ be given and let  $j \in A_i$, then $ \max \{ |Q_j|, |Q_i|\} \leq \left|\frac34Q_j \cap \frac34Q_i\right|.$
  \descitem{W11} Let $i \in \NN$ be given, then for any $j \in A_i$, we have $\frac34Q_j \subset 4Q_i$.
  \end{description}
  \end{lemma}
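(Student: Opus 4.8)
My plan is to recognise the statement as the classical Whitney covering lemma carried out in the metric measure space $(\RR^{n+1}, d_\la, \mathcal{L}^{n+1})$, where $d_\la$ is the $\la$-intrinsic parabolic metric $d_\la(z,w) := \max\{\,|x-y|,\ (\kappa^{-1}|t-s|)^{1/2}\,\}$ for $z=(x,t)$ and $w=(y,s)$. Two structural facts make the standard machine run with constants depending only on $n$. First, the $d_\la$-ball of radius $r$ about $z$ is \emph{exactly} the cylinder $Q_r(z) = B_r(x)\times(t-\kappa r^2,\,t+\kappa r^2)$, and $d_\la$ is a genuine metric — the maximum of two metrics is a metric, and $(\kappa^{-1}|t-s|)^{1/2}$ is a metric on the time line by subadditivity of $\sqrt{\cdot}$. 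Second, $|Q_{2r}(z)| = 2^{n+2}|Q_r(z)|$, so Lebesgue measure is doubling with constant $2^{n+2}$, which is independent of $\kappa$ and hence of $\la$. I would also record the elementary Lipschitz estimate $|d_\la(z,\mathbb{E}) - d_\la(w,\mathbb{E})| \le d_\la(z,w)$ for the distance-to-$\mathbb{E}$ function; this is the only analytic input beyond the covering lemma, and it is what forces comparability of nearby Whitney radii.

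For the construction, since $\mathbb{E}$ is closed we have $d_\la(z,\mathbb{E})>0$ for every $z\in\mathbb{E}^c$, so set $\rho(z) := \tfrac1{16}\,d_\la(z,\mathbb{E})$; the cylinders $\tfrac12 Q_{\rho(z)}(z)$, $z\in\mathbb{E}^c$, cover $\mathbb{E}^c$. From the family $\{\,c\,Q_{\rho(z)}(z): z\in\mathbb{E}^c\,\}$ (for a fixed small $c$ to be calibrated) I would extract a maximal pairwise disjoint subfamily; such a family exists by Zorn and is automatically countable since $\RR^{n+1}$ is $\sigma$-finite and the cylinders have positive measure. Writing the chosen cylinders as $Q_j = Q_{r_j}(z_j)$ with $r_j := \rho(z_j)$, disjointness of the scaled copies gives \descref{W6} and the normalisation gives \descref{W2}. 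Maximality means every $c\,Q_{\rho(z)}(z)$ meets some $c\,Q_j$; plugging the Lipschitz estimate into this overlap yields $\rho(z)\approx r_j$ there and hence $z\in\tfrac12 Q_j$, which upgrades the cover to \descref{W3}. Applying the Lipschitz estimate again in the case $Q_j\cap Q_k\neq\emptyset$ gives $d_\la(z_j,z_k)\lesssim r_j+r_k$ and then $|r_j-r_k|\le\tfrac1{16}d_\la(z_j,z_k)\lesssim r_j+r_k$, which with the correct numerical constants sharpens to \descref{W5}.

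The remaining properties are then geometric consequences of \descref{W2}, \descref{W5}, \descref{W6} together with doubling. For \descref{W4}: if $z\in 8Q_j$ then $d_\la(z,\mathbb{E})\ge d_\la(z_j,\mathbb{E})-d_\la(z,z_j) > 16r_j-8r_j = 8r_j>0$, so $8Q_j\subset\mathbb{E}^c$, while a point of $\mathbb{E}$ realising $d_\la(z_j,\mathbb{E})=16r_j$ (up to an arbitrarily small margin) lies in $16Q_j$, so $16Q_j\cap\mathbb{E}\neq\emptyset$; the ratio of the dilation factors $8,16$ to the normalisation $r_j = d_\la(z_j,\mathbb{E})/16$ is exactly what is used here. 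For \descref{W7}: if $z\in 4Q_j$ then, since $16Q_j\cap\mathbb{E}\neq\emptyset$ and $d_\la(\cdot,\mathbb{E})$ is comparable at $z$ and $z_j$, all such $r_j$ are comparable and all such $Q_j$ lie in a fixed cylinder about $z$; as the $\tfrac14 Q_j$ are disjoint and of comparable volume, doubling bounds their number by $c(n)$. Finally \descref{W8}–\descref{W11} follow in the same spirit: $j\in A_i$ forces $r_j\in[\tfrac12 r_i,2r_i]$ and $d_\la(z_i,z_j)\lesssim r_i$, whence $\tfrac34 Q_j\subset 4Q_i$ (giving \descref{W11}), $|\tfrac34 Q_j\cap\tfrac34 Q_i|\gtrsim\max\{|Q_i|,|Q_j|\}$ by comparison with $Q_i\cap Q_j$ and doubling (giving \descref{W9} and \descref{W10}), and $\#A_i\le c(n)$ from disjointness of the $\tfrac14 Q_j$ inside a fixed cylinder plus doubling (giving \descref{W8}).

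I expect the only genuine obstacle to be the bookkeeping of the explicit numerical constants so that the chain of (partly strict) inequalities closes simultaneously — in particular calibrating the Vitali dilation $c$ and the normalisation $r_j = d_\la(z_j,\mathbb{E})/16$ so that $8$-dilates stay interior while slightly larger dilates already reach $\mathbb{E}$, and so that the factor $2$ in \descref{W5} and the inclusion \descref{W11} come out with the stated constants rather than merely up to a harmless factor. Everything else is the standard Whitney/Vitali argument in a doubling metric space, and the parameter $\la$ never enters the constants: $\kappa=\la^{2-p}$ is fixed throughout and only sets the scale-invariant aspect ratio of the cylinders, so all constants depend on $n$ alone (and $p$ only through the definition of $\kappa$), as asserted.
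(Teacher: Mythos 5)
The paper does not actually prove this lemma: it is quoted verbatim from the literature (\cite[Lemma 3.1]{diening2010existence} and \cite[Chapter 3]{bogelein2013regularity}), so the object of comparison is really the construction carried out in those references. Your plan is the natural one — treat $(\RR^{n+1},d_\la)$ as a doubling metric space, normalise $r_j = \tfrac{1}{16}d_\la(z_j,\mathbb{E})$, extract a Zorn-maximal pairwise disjoint subfamily of $\{\tfrac14 Q_{\rho(z)}(z)\}$, and derive (W3)--(W11) from the $1$-Lipschitz property of $d_\la(\cdot,\mathbb{E})$. That framework is correct and all the structural observations (max of two metrics is a metric, doubling constant $2^{n+2}$ independent of $\kappa$, etc.) are right.

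However, the numerical constants in the statement cannot be recovered by this route, and this is not a matter of ``calibration'' as you anticipate: it is a structural incompatibility between \emph{disjointness of} $\tfrac14 Q_j$ \emph{and coverage by} $\tfrac12 Q_j$. Concretely, suppose the $\tfrac14 Q_j$ are a maximal disjoint subfamily of $\{\tfrac14 Q_{\rho(z)}(z)\}_{z\in\mathbb{E}^c}$ with $\rho(z) = \tfrac{1}{a}\,d_\la(z,\mathbb{E})$. Given $z'\in\mathbb{E}^c$ not equal to any $z_j$, maximality furnishes a $j$ with $\tfrac14 Q_{\rho(z')}(z')\cap\tfrac14 Q_{r_j}(z_j)\neq\emptyset$, hence
\[
d_\la(z',z_j) < \tfrac14\bigl(\rho(z')+r_j\bigr), \qquad a\,\lvert\rho(z')-r_j\rvert \le d_\la(z',z_j) < \tfrac14\bigl(\rho(z')+r_j\bigr),
\]
which yields $\rho(z') < \tfrac{4a+1}{4a-1}r_j$ and therefore
\[
d_\la(z',z_j) < \tfrac14\Bigl(\tfrac{4a+1}{4a-1}+1\Bigr)r_j = \tfrac{2a}{4a-1}\,r_j.
\]
But $\tfrac{2a}{4a-1} > \tfrac12$ for every $a\in(0,\infty)$ (it decreases to $\tfrac12$ only in the limit $a\to\infty$), and with the stated normalisation $a=16$ one gets $\tfrac{32}{63}>\tfrac12$. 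So the maximal disjoint family only guarantees $z'\in\tfrac{32}{63}Q_j$, which does not imply $z'\in\tfrac12 Q_j$; tuning $a$ cannot close the gap, and taking the extraction parameter $c$ smaller than $\tfrac14$ would sacrifice (W6). Thus (W3) does not follow from the Vitali/Zorn extraction you propose, no matter how the two free constants are adjusted. The cited sources sidestep this by building the covering from a dyadic decomposition of $\RR^{n+1}$ into parabolically-scaled cubes (selecting maximal dyadic cubes whose dilates stay inside $\mathbb{E}^c$) and then passing to the circumscribed/inscribed cylinders; the rigidity of the dyadic grid is exactly what delivers the tight pair $(\tfrac14,\tfrac12)$ together with the exact normalisation $d_\la(z_j,\mathbb{E})=16r_j$. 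If you want to keep the metric-space Vitali argument, you would have to weaken (W3) to a dilation factor strictly larger than $\tfrac12$ (which would then force a matching change in (W12)--(W14) and in the definition of the truncation $\varphi_{\la,h}$), or else abandon the Zorn extraction in favour of the dyadic construction.
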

  
  Subordinate to the above Whitney covering, we have an associated partition of unity which we recall in the following lemma.
  \begin{lemma}
  \label{partition_unity}
  Associated to the covering given in \cref{whitney_decomposition}, there exists functions $\{ \Psi_j\}_{j \in \NN} \in C_c^{\infty}\lbr\frac34Q_j\rbr$ such that the following holds:
  \begin{description}
  \descitem{W12} $\lsb{\chi}{\frac12Q_j} \leq \Psi_j \leq \lsb{\chi}{\frac34Q_j}$.
  \descitem{W13} $\|\Psi_j\|_{\infty} + r_j \| \nabla \Psi_j\|_{\infty} + r_j^2 \| \nabla^2 \Psi_j\|_{\infty} + \la^{2-p} r_j^2 \| \pa_t \Psi_j\|_{\infty} \leq C(n)$.
  \descitem{W14} Let $i \in \NN$ be given, then $\sum_{j \in A_i} \Psi_j(z) = 1$  for all $z \in \frac34Q_i$.
\end{description}
\end{lemma}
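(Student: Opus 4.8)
The plan is the classical two-step partition-of-unity construction subordinate to the parabolic Whitney cylinders of \cref{whitney_decomposition}: first build parabolically scaled smooth bumps on the $Q_j$, then normalize them to a partition of unity on $\mathbb{E}^c$.

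\textbf{Step 1: scaled bumps.} Recalling from \descref{W1} that $Q_j = B_{r_j}(x_j)\times(t_j-\ka r_j^2,t_j+\ka r_j^2)$ with $\ka:=\la^{2-p}$, I would let $\Phi_j$ be the affine parabolic bijection carrying $\tfrac34Q_j$ onto the unit parabolic cylinder $B_1(0)\times(-1,1)$, fix once and for all a cutoff $\eta\in C_c^\infty\lbr B_1(0)\times(-1,1)\rbr$ with $0\le\eta\le1$ and $\eta\equiv1$ on $\Phi_j(\tfrac12Q_j)$ (the same subcylinder for every $j$), and set $\varphi_j:=\eta\circ\Phi_j$. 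Then $\varphi_j\in C_c^\infty\lbr\tfrac34Q_j\rbr$ with $\chi_{\frac12Q_j}\le\varphi_j\le\chi_{\frac34Q_j}$, and by the chain rule each spatial derivative of $\varphi_j$ picks up a factor $r_j^{-1}$ and each time derivative a factor $(\ka r_j^2)^{-1}=\la^{p-2}r_j^{-2}$ — exactly the weighting in \descref{W13}.

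\textbf{Step 2: normalization.} Put $S:=\sum_k\varphi_k$ on $\mathbb{E}^c$. Since $\tfrac34Q_k\subset4Q_k$ and \descref{W7} bounds $\sum_k\chi_{4Q_k}$, this sum is locally finite, so $S\in C^\infty(\mathbb{E}^c)$; and by \descref{W3} every $z\in\mathbb{E}^c$ lies in some $\tfrac12Q_j$, where $\varphi_j(z)=1$, so $1\le S\le c(n)$ on all of $\mathbb{E}^c$. I would then define $\Psi_j:=\varphi_j/S\in C_c^\infty\lbr\tfrac34Q_j\rbr$, so that $0\le\Psi_j\le1$. Property \descref{W12} then follows: the support of $\Psi_j$ and $\Psi_j\le1$ give the upper bound, and $S\ge1$ together with $\varphi_j\equiv1$ on $\tfrac12Q_j$ gives the lower bound (up to the bounded-overlap constant of \descref{W7}, which is absorbed into later constants). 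For \descref{W13} I would differentiate the quotient $\varphi_j/S$: the numerator is controlled by Step 1, while $\na S$, $\na^2S$ and $\pa_tS$ are, at each point, finite sums over the at most $c(n)$ cylinders meeting that point — whose radii are comparable to $r_j$ on $\tfrac34Q_j$ by \descref{W5} — and combining with $S\ge1$ gives the bound with a purely dimensional constant. For \descref{W14}: if $z\in\tfrac34Q_i$ and $\varphi_j(z)\ne0$, then $z\in\tfrac34Q_j\cap\tfrac34Q_i\ne\emptyset$, i.e.\ $j\in A_i$; hence $\sum_{j\in A_i}\varphi_j(z)=S(z)$, so $\sum_{j\in A_i}\Psi_j(z)=1$, the index set $A_i$ being finite by \descref{W8}.

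\textbf{Main obstacle.} There is nothing deep here — everything is read off \cref{whitney_decomposition} — but the one spot requiring genuine care is the derivative estimate \descref{W13} for the normalized $\Psi_j$: one must invoke \descref{W5} to know that every Whitney cylinder overlapping a point of $\tfrac34Q_j$ has radius $\sim r_j$, otherwise the quotient rule produces the wrong power of $r$; and one must track the parabolic scaling $\ka=\la^{2-p}$ carefully so that the time-derivative bound comes out as $\la^{2-p}r_j^2\|\pa_t\Psi_j\|_\infty\le C(n)$ with no stray power of $\la$.
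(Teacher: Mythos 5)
Your construction is the standard one (scaled bumps plus normalization by the locally finite sum), and it is essentially the argument the paper delegates to the cited references. Steps 1 and 2 are carried out correctly: the parabolic scaling of the bumps produces exactly the weighted derivative bounds of \descref{W13}, the boundedness $1 \le S \le c(n)$ follows from \descref{W3} and \descref{W7}, the derivative bound on $\Psi_j = \varphi_j / S$ uses \descref{W5} and \descref{W8} exactly as you say, and \descref{W14} follows because $\varphi_j(z)\ne 0$ with $z\in\tfrac34 Q_i$ forces $j\in A_i$.

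However, you slip past a genuine issue in the parenthetical ``(up to the bounded-overlap constant of \descref{W7}, which is absorbed into later constants).'' The lower bound in \descref{W12} as written, $\lsb{\chi}{\frac12 Q_j}\le\Psi_j$, is not just hard to obtain from your normalization --- it is \emph{incompatible} with \descref{W14}. Indeed, \descref{W6} only forces the quarter-cylinders to be disjoint, and \descref{W3} covers the open set $\mathbb{E}^c$ by the open half-cylinders, so for a general $\mathbb{E}^c$ there exist $j\ne k$ and a point $z\in\tfrac12 Q_j\cap\tfrac12 Q_k$. Then $\tfrac34 Q_j\cap\tfrac34 Q_k\ne\emptyset$, so $k\in A_j$, and \descref{W12} would force $\Psi_j(z)=\Psi_k(z)=1$, whence $\sum_{m\in A_j}\Psi_m(z)\ge 2$, contradicting \descref{W14}. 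What your normalization actually yields --- and what is provable and sufficient for the paper (it is used in \cref{pointwise_poincare} only to get $\|\Psi_i\|_{L^1(\frac34 Q_i)}\approx|Q_i|$) --- is the weaker $c(n)^{-1}\lsb{\chi}{\frac12 Q_j}\le\Psi_j\le\lsb{\chi}{\frac34 Q_j}$. You should state that explicitly rather than tuck it into a parenthetical, since as literally written \descref{W12} cannot be established.
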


In this section, let us take any exponent $q$ such that 
\begin{equation}\label{def_q}
\max\{1,p-1\} \leq q < p,
\end{equation}
and let us denote 
\[
    \rho := \diam (\Om).
\]

We consider following problem: Let $\vec{F},\vec{H}\in L^{p}(\Om_T,\mathbb{R}^n)$ be given and suppose that $w\in C(0,T;L^{2}(\Om))\cap L^p(0,T;W_0^{1,p}(\Om))$ be the unique weak solution of
\begin{equation}\label{given}
\begin{cases}
w_t-\dv\aa(z,\nabla w+\vec{H})=\dv \lvert \vec{F}\rvert^{p-2}\vec{F}&\text{ in }\Om_T,\\
w=0&\text{ on }\pa_p\Om_T.\\
\end{cases}
\end{equation}

Let us now define the following function:
\begin{equation}\label{maxim}
g(z):=\mathcal{M}\lt(\left[\lv \na w\rv^{q}+\lv \vec{F}\rv^{q}+\lv \vec{H}\rv^{q}\right]\rchi_{\Om_T}\rt)^\frac{1}{q}(z),
\end{equation}
where $\mathcal{M}$ is the Hardy Littlewood maximal function defined in \eqref{par_max}.

For a fixed $\la>0$, let us define the good set by
\begin{equation}\label{elam}
E_\la:=\{z\in\mathbb{R}^{n+1}\mid g(z)\le \la\}.
\end{equation}
Since $E_\la^c$ is open, from \cref{whitney_decomposition} and \cref{partition_unity}, we define following extension:
\begin{equation}\label{lipschitz_extension}
\varphi_{\la,h}(z):=w_h(z)-\sum_i\Psi(z)(w_h(z)-\phi_h^i),
\end{equation}
where
\begin{equation*}
\phi_h^i:=
\begin{cases}
\frac{1}{\lV \Psi_i\rV_{L^1(\frac{3}{4}Q_i)}}\iint_{\frac{3}{4}Q_i}w_h(z)\Psi_i(z)\rchi_{[0,T]}\ dz&\text{ if }\frac{3}{4}Q_i\subset \Om\times (0,\infty),\\
0&\text{ else.}
\end{cases}
\end{equation*}

\begin{lemma}
We have the following estimates for the function constructed in \cref{lipschitz_extension}:
\begin{enumerate}[(i)]
    \item\label{item1} For any $z \in E_{\la}^c$, there holds
    \[
\lv \varphi_{\la,h}(z)\rv\apprle_{(n,p,q,\lamot)}\rho\la.
\]
\item\label{item2} For a given $i \in \NN$ and any $j \in A_i$, there holds
\begin{equation}\label{diff}
\lv \phi_h^i-\phi_h^j\rv\apprle_{(n,p,q,\lamot)}\min\{\rho,r_i\}\la.
\end{equation}
\item\label{item3} Given any  $z \in \elam^c$, we have $z \in \frac34Q_i$ for some $i \in \NN$. Then there holds
 \begin{equation}
  \label{3.34}
  |\nabla \varphi_{\la,h}(z)| \leq_{(n,p,q,\lamot)} \la.
 \end{equation}

\item\label{item4} For any $\vartheta \geq 1$,  we have the following bound:
 \begin{equation*}
  \iint_{\Om_T\setminus\elam } |\varphi_{\la,h}(z)|^{\vartheta} \ dz \apprle_{(n,p,q,\lamot)} \iint_{\Om_T\setminus\elam } |w_h(z)|^{\vartheta}\ dz.
 \end{equation*}
 
\end{enumerate}
\end{lemma}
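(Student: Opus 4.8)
The plan is to reduce all four estimates to bounds on the weighted averages $\phi_h^i$ and on the differences $\phi_h^i-\phi_h^j$, and then to combine these with the Whitney geometry of \cref{whitney_decomposition,partition_unity}. I would first record two structural identities. Since $\tfrac34Q_j\subset 8Q_j\subset\elam^c$ by \descref{W4}, every $\Psi_j$ vanishes on $\elam$, so $\varphi_{\la,h}=w_h$ there; and for $z\in\tfrac34Q_i$ only indices $j\in A_i$ contribute to the sum in \eqref{lipschitz_extension}, so \descref{W14} (which also gives $\sum_{j\in A_i}\nabla\Psi_j\equiv0$ on the open set $\tfrac34Q_i$) yields, on $\tfrac34Q_i$,
\[
\varphi_{\la,h}(z)=\sum_{j\in A_i}\Psi_j(z)\,\phi_h^j,\qquad
\nabla\varphi_{\la,h}(z)=\sum_{j\in A_i}\nabla\Psi_j(z)\,(\phi_h^j-\phi_h^i).
\]
With these, part~(i) reduces (via \descref{W3}) to the bound $|\phi_h^i|\apprle\rho\la$; part~(iii) reduces (via \descref{W8}, \descref{W13} and \descref{W5}) to part~(ii); and part~(iv) reduces (via \descref{W5}, \descref{W7}, \descref{W8} and Jensen's inequality) to the elementary bound $|\phi_h^i|\apprle\fiint_{Q_i}|w_h|\ dz$. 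So the whole argument rests on controlling $\phi_h^i$ and $\phi_h^i-\phi_h^j$.

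The first key ingredient is a good-cylinder estimate. For any Whitney cylinder $Q_j$, \descref{W4} produces a point $z^\ast\in16Q_j\cap\elam$, so that $g(z^\ast)\le\la$; since a fixed dilate $cQ_j$ (say $c=20$) is an admissible parabolic cylinder containing $z^\ast$, the definition \eqref{maxim} of $g$ forces
\[
\fiint_{cQ_j}\bigl(|\nabla w|^q+|\vec{F}|^q+|\vec{H}|^q\bigr)\rchi_{\Om_T}\ dz\le\la^q.
\]
By Jensen's inequality (recall $q\ge 1$ from \cref{def_q}) this gives $\fiint_{cQ_j}|\nabla w|\ dz\apprle\la$, and — this is where the hypothesis $q\ge p-1$ enters — the growth bound $|\aa(z,\nabla w+\vec{H})|\le\La_1|\nabla w+\vec{H}|^{p-1}$ from \cref{abounded}, again with Jensen, gives $\fiint_{cQ_j}\bigl(|\aa(z,\nabla w+\vec{H})|+|\vec{F}|^{p-1}\bigr)\ dz\apprle_{(\La_1,p)}\la^{p-1}$ (passing to the Steklov-averaged form of \eqref{given} only enlarges the relevant cylinders harmlessly). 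The second ingredient is an oscillation estimate on each $Q_j$. When $\tfrac34Q_j\subset\Om\times(0,\infty)$, I would apply the parabolic Poincar\'e inequality for the Steklov-averaged version of \eqref{given} on $Q_j$ in its intrinsic geometry ($\kappa=\la^{2-p}$):
\[
\fiint_{Q_j}\Bigl|\frac{w_h-(w_h)_{Q_j}}{r_j}\Bigr|\ dz\apprle\fiint_{cQ_j}|\nabla w_h|\ dz+\la^{2-p}\fiint_{cQ_j}\bigl(\bigl|[\aa(z,\nabla w+\vec{H})]_h\bigr|+\bigl|[|\vec{F}|^{p-2}\vec{F}]_h\bigr|\bigr)\ dz\apprle\la,
\]
the last step by the good-cylinder estimate; since $\phi_h^j$ is a $\Psi_j$-weighted mean over $\tfrac34Q_j\subset Q_j$ with $\|\Psi_j\|_{L^1(\frac34Q_j)}$ comparable to $|Q_j|$ by \descref{W12}, this yields $\fiint_{Q_j}|w_h-\phi_h^j|\ dz\apprle r_j\la$. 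When $\tfrac34Q_j\not\subset\Om\times(0,\infty)$ one has $\phi_h^j=0$, and $w_h$ vanishes on a fixed fraction of $\tfrac34Q_j$ — using the measure density \cref{measure_density} at the lateral boundary and the zero exterior and initial values in time — so \cref{poincare} together with the good-cylinder estimate gives $\fiint_{Q_j}|w_h|\ dz\apprle\min\{\rho,r_j\}\la$ (for large $r_j$ one also invokes the global Poincar\'e inequality on $\Om$ and the bound $\iint_{\Om_T}|\nabla w|^q\ dz\apprle\la^q|Q_j|$). In all cases $\fiint_{Q_j}|w_h-\phi_h^j|\ dz\apprle\min\{\rho,r_j\}\la$ and $\fiint_{Q_j}|\nabla w_h|\ dz\apprle\la$.

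Now I assemble the pieces. For part~(ii), if $j\in A_i$ then $\tfrac34Q_i,\tfrac34Q_j\subset 4Q_i$ and $r_i\simeq r_j$ by \descref{W5} and \descref{W11}, while $|4Q_i|\apprle|\tfrac34Q_i\cap\tfrac34Q_j|$ by \descref{W10}; subtracting the common plain mean $(w_h)_{4Q_i}$ (and using $|\phi_h^i|,|\phi_h^j|\apprle\fiint_{4Q_i}|w_h|\ dz$ in the boundary-touching case) and applying the oscillation estimate on $4Q_i$, which still satisfies the good-cylinder bound up to fixed constants, gives $|\phi_h^i-\phi_h^j|\apprle\min\{\rho,r_i\}\la$. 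Part~(iii) then follows from the gradient identity above together with $\#A_i\apprle_{(n)}1$ (\descref{W8}), $\|\nabla\Psi_j\|_\infty\apprle_{(n)}r_j^{-1}$ (\descref{W13}), $r_j\simeq r_i$, and part~(ii). For part~(i): any $z\in\elam^c$ lies in some $\tfrac12Q_i\subset\tfrac34Q_i$ by \descref{W3}, so $|\varphi_{\la,h}(z)|\le\max_{j\in A_i}|\phi_h^j|\le|\phi_h^i|+\max_{j\in A_i}|\phi_h^j-\phi_h^i|\apprle|\phi_h^i|+\rho\la$, and $|\phi_h^i|\apprle\fiint_{Q_i}|w_h|\ dz\apprle\rho\la$ follows from the oscillation estimate directly when $Q_i$ touches the boundary and otherwise from a standard chaining argument along a sequence of overlapping Whitney cylinders (comparable radii by \descref{W5}) terminating at $\pa_p\Om_T$, where $w_h=0$. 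For part~(iv): $\Om_T\setminus\elam\subset\elam^c=\bigcup_j\tfrac12Q_j$ with bounded overlap by \descref{W7}; on each $\tfrac12Q_j$ the identity for $\varphi_{\la,h}$ and Jensen's inequality ($\vartheta\ge1$) give $|\varphi_{\la,h}|^{\vartheta}\apprle\max_{k\in A_j}\fiint_{Q_k}|w_h|^{\vartheta}\ dz$, and summing over $j$, using comparable volumes (\descref{W5}), $\#A_k\apprle_{(n)}1$ (\descref{W8}), and that $Q_k\cap\Om_T\subset\Om_T\setminus\elam$ with $\sum_k\rchi_{Q_k}\apprle_{(n)}1$ (\descref{W7}), yields the desired bound.

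I expect the main obstacle to be the uniform oscillation estimate $\fiint_{Q_j}|w_h-\phi_h^j|\ dz\apprle\min\{\rho,r_j\}\la$: it requires the parabolic Poincar\'e inequality in the intrinsic geometry of the Whitney cylinders coupled with the good-cylinder bound — precisely where the hypothesis $q\ge p-1$ and the structure condition \cref{abounded} are used, to absorb the $\aa$-term at the correct scale — together with a careful treatment of the cylinders meeting the parabolic boundary $\pa_p\Om_T$ (the lateral part handled through \cref{measure_density}, the initial part through the zero initial trace of $w$). Everything else is bookkeeping with the Whitney partition of unity.
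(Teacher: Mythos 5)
Your proposal is correct and reconstructs the standard Lipschitz-truncation toolkit that the paper implicitly relies on: the paper itself gives no proof here, merely citing Lemmas 4.9, 4.10, 4.11 and 4.14 of \cite{adimurthi2018gradient}, and your argument recovers exactly the ingredients those lemmas are built from — the partition-of-unity identities on $\frac34Q_i$ via \descref{W14}, the good-cylinder bound from \descref{W4} and \eqref{maxim}, the intrinsic parabolic Poincar\'e estimate exploiting $q\ge\max\{1,p-1\}$ to absorb the $\aa$- and $\vec F$-terms at the scale $\la^{p-1}$, and the boundary cases via \cref{measure_density}. One small imprecision worth fixing in part (i): the chain you invoke to reach $\pa_p\Om_T$ should consist of \emph{concentric dilates} $2^\ell Q_i$ (on which the good-cylinder bound still applies because $16Q_i\cap\elam\neq\emptyset$ forces $16\cdot 2^\ell Q_i\cap\elam\neq\emptyset$), not of neighbouring Whitney cylinders — the latter are confined to $\elam^c$ and in general do not connect $Q_i$ to the parabolic boundary, since $\pa_p\Om_T$ may well lie in the good set $\elam$.
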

\begin{proof}
    The proof of \cref{item1} follows from \cite[Lemma 4.9]{adimurthi2018gradient}, the proof of \cref{item2} follows from \cite[Lemma 4.10]{adimurthi2018gradient},  the proof of \cref{item3} follows from \cite[Lemma 4.11]{adimurthi2018gradient} and finally the proof of \cref{item4} follows from \cite[Lemma 4.14]{adimurthi2018gradient}.
\end{proof}

We now prove an important pointwise estimate which follows the same idea as in \cite[Lemma 3.1.1]{bulicek2018well}:

\begin{lemma}\label{pointwise_poincare}
Let $i\in\NN$ and suppose $\frac{3}{4}Q_i\subset\Om\times(0,\infty)$, then for any $j\in A_i$ and a.e. $(x,t)=z\in\frac{3}{4}Q_i$, we have
\[
\frac{\lvert w_h(z)-\phi_h^j\rv}{r_i}\apprle_{(n,p,q,\lamot)}\la .
\]
\end{lemma}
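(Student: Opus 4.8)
The plan is to estimate $|w_h(z) - \phi_h^j|$ by inserting the intermediate value $\phi_h^i$ and using the triangle inequality: $|w_h(z) - \phi_h^j| \le |w_h(z) - \phi_h^i| + |\phi_h^i - \phi_h^j|$. The second term is already controlled by \cref{diff} in part \cref{item2} of the preceding lemma, which gives $|\phi_h^i - \phi_h^j| \apprle \min\{\rho, r_i\}\la \le r_i \la$, so it suffices to bound the first term by $r_i\la$.

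For the first term, recall that $\phi_h^i$ is the weighted average $\frac{1}{\|\Psi_i\|_{L^1(\frac34 Q_i)}}\iint_{\frac34 Q_i} w_h(z')\Psi_i(z')\,dz'$ (the case $\frac34 Q_i \subset \Om\times(0,\infty)$ being in force), so $w_h(z) - \phi_h^i = \frac{1}{\|\Psi_i\|_{L^1}}\iint_{\frac34 Q_i}(w_h(z) - w_h(z'))\Psi_i(z')\,dz'$. First I would observe that $\|\Psi_i\|_{L^1(\frac34 Q_i)} \apprge |\frac12 Q_i| \apprge |Q_i|$ using \dref{W12}{W12}, so that the prefactor is comparable to $\fiint_{\frac34 Q_i}$. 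Then one estimates the parabolic oscillation of $w_h$ on $\frac34 Q_i$: split $w_h(z) - w_h(z')$ into a spatial difference at fixed time and a temporal difference. The spatial part is handled by a Poincar\'e/Sobolev-type inequality over the ball $\frac34 B_i$ together with the fact that $8Q_i \subset E_\la^c$ (from \dref{W4}{W4}) and $16Q_i \cap E_\la \ne \emptyset$ (so there is a point where $g \le \la$), which via the definition of $g$ in \cref{maxim} as a maximal function controls $\fiint_{cQ_i}(|\na w|^q + |\vec F|^q + |\vec H|^q)\,dz \apprle \la^q$; by H\"older this gives $\fiint |\na w| \apprle \la$ and the spatial oscillation is then $\apprle r_i \la$. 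The temporal part is handled using the equation \cref{given}: $\pa_t w_h = \dv(|\vec F|^{p-2}\vec F - \aa(z,\na w + \vec H))_h$ in a weak sense, so testing against $\Psi_i$ and using the Whitney estimates \dref{W13}{W13} (in particular the scaling $\|\na \Psi_i\|_\infty \apprle r_i^{-1}$ and the time-length $\ka r_i^2 = \la^{2-p}r_i^2$ of $I_i$) converts the time difference into an integral of $|\na w + \vec H|^{p-1} + |\vec F|^{p-1}$ over $cQ_i$, which the maximal-function bound controls by $\la^{p-1}$; multiplying by the time scale $\la^{2-p}r_i^2$ and dividing by the spatial measure yields again $\apprle r_i \la$. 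This is exactly the structure of \cite[Lemma 3.1.1]{bulicek2018well}.

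The main obstacle I anticipate is the careful bookkeeping of the temporal oscillation estimate: one must pass the Steklov average $w_h$ through the weak formulation correctly, justify that $\pa_t w_h$ may be paired with the cutoff $\Psi_i$ on $\frac34 Q_i$ (using that $\frac34 Q_i \subset \Om \times (0,\infty)$ so the boundary terms vanish and the Steklov average is legitimate for $h$ small relative to the time-slab), and track the precise powers of $\la$ and $r_i$ so that the parabolic scaling $\ka = \la^{2-p}$ makes the exponents close to give a clean factor of $r_i\la$ rather than $r_i\la$ times a stray power of $\la$. Once the spatial and temporal oscillations are both bounded by a constant (depending only on $n,p,q,\lamot$) times $r_i\la$, dividing by $r_i$ and adding the $\cref{item2}$ bound completes the proof. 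I would present the argument by reducing to the two oscillation estimates, citing the maximal-function characterization of $E_\la^c$ and the Whitney properties \dref{W4}{W4}, \dref{W13}{W13} as the inputs, and then combining via the triangle inequality above.
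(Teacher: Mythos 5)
Your proposal is correct in its overall strategy and uses exactly the same ingredients as the paper's proof: the triangle-inequality reduction via \cref{diff}, the maximal-function bound coming from \descref{W4} (and the point of $E_\la$ in $16Q_i$) for the gradient term, the weak formulation of \cref{given} tested against $\Psi_i$ for the time-derivative term, and the Whitney scaling \descref{W13} with $\ka = \la^{2-p}$ to close the $\la$-powers. Both you and the paper cite \cite[Lemma 3.1.1]{bulicek2018well} as the model. The one place where your plan differs, and would need to be tightened, is the exact oscillation decomposition. You propose to split $w_h(z)-w_h(z')$ into a spatial difference at fixed time and a temporal difference $w_h(x,t)-w_h(x,t')$ at a single fixed spatial point $x$. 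That latter quantity cannot be paired directly against $\Psi_i$ through the weak formulation: the weak formulation requires a genuine spatial test function, not a spatial Dirac at $x$. The paper sidesteps this by instead writing $w_h(z_0)-\phi_h^i$ as $\frac{1}{\lV\Psi_i\rV_{L^1}}\int_0^1\frac{d}{ds}\iint_{\frac34 Q_i} w_h\lbr x_0+s(\tx-x_0),\,t_0+s^2(\tlt-t_0)\rbr\Psi_i(\tz)\,d\tz\,ds$ and differentiating in $s$: this produces a gradient term $I$ with weight $|\tx-x_0|\le r_i$ and a time-derivative term $II$ with weight $2s(\tlt-t_0)\le \la^{2-p}r_i^2$, both of which appear under a genuine $\iint\Psi_i\,d\tz$ so the weak formulation applies cleanly (via a $\tz\mapsto z$ change of variables). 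If you wish to keep a direct spatial/temporal split, you should insert the spatial average $(w_h)_{B_i}(t)$ and handle the time difference of averages, which makes the weak formulation usable but adds terms; the parabolic-ray parametrization is simply the cleaner way to organize the same computation. The rest of your sketch -- the Steklov-average caveat, the interior assumption $\frac34 Q_i\subset\Om\times(0,\infty)$ to avoid boundary terms, and the $r_i\la$ bookkeeping -- matches the paper.
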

\begin{proof}
Without loss of generality we only need to prove
\[
\frac{\lvert w_h(z)-\phi_h^i\rv}{r_i}\apprle_{(n,p,q,\lamot)}\la,
\]
since otherwise, we can apply triangle inequality and \cref{diff} to get
\[
\frac{\lvert w_h(z)-\phi_h^j\rv}{r_i}\le\frac{\lvert w(z)-\phi_h^i\rv}{r_i}+\frac{\lvert \phi_h^i-\phi_h^j\rv}{r_i}\apprle \frac{\lvert w_h(z)-\phi_h^i\rv}{r_i}+\la.
\]

We shall now split the proof into three cases, depending on where the cylinder $\frac34Q_i$ lies. 
\begin{description}[leftmargin=*]
    \item[Case $\frac34Q_i \subset \Om \times (0,T)$:]  In this case, let us fix any $z_0 = (x_0,t_0) \in \frac34Q_i$, from which we get
    \begin{equation}\label{6.77}
\begin{array}{rcl}
 w_h(z_0)-\phi_h^i
&=&\frac{1}{\lVert \Psi_i\rV_{L^1(\frac{3}{4}Q_i)}} \int_0^1 \frac{d}{ds}\left(\iint_{\frac{3}{4}Q_i} w_h\lbr x_0+s(\tx-x_0),t_0+s^2(\tlt-t_0)\rbr\Psi_i(\tz) \ d\tz\right)\ ds\\
&=&\frac{1}{\lVert \Psi_i\rV_{L^1(\frac{3}{4}Q_i)}}\int_0^1\iint_{\frac{3}{4}Q_i}\iprod{\nabla  w_h\lbr x_0+s(\tx-x_0),t_0+s^2(\tlt-t_0)\rbr}{\tx-x_0}\Psi_i(\tz)\ d\tz\ ds\\
&&+\frac{1}{\lVert \Psi_i\rV_{L^1(\frac{3}{4}Q_i)}}\int_0^1\iint_{\frac{3}{4}Q_i}\pa_t w_h\lbr x_0+s(\tx-x_0),t_0+s^2(\tlt-t_0)\rbr\ 2s(\tlt-t_0)\Psi_i(\tz)\ d\tz\ ds\\
&=:&\frac{1}{\lVert \Psi_i\rV_{L^1(\frac{3}{4}Q_i)}} (I+II).
\end{array}
\end{equation}
Let us now estimate each of the terms as follows:
\begin{description}
    \item[Estimate for $I$:] We estimate this term as follows, without loss of generality, we shall assume $Q_i$ is centered at $(0,0)$.
    \begin{equation}\label{6.8}
        \begin{array}{rcl}
            |I| & {=} & \abs{\int_0^1\iint_{\frac34 Q_i}\iprod{\nabla  w_h\lbr x_0+s(\tx-x_0),t_0+s^2(\tlt-t_0)\rbr}{\tx-x_0}\Psi_i(\tx,\tlt)\ d\tz\ ds}\\
            & \overset{\redlabel{6.8b}{a}}{\leq} & r_i \int_0^1\iint_{\frac34 Q_i}\abs{\nabla  w_h\lbr x_0+s(\tx-x_0),t_0+s^2(\tlt-t_0)\rbr}\ d\tz\ ds\\
            & \overset{\redlabel{6.8c}{b}}{\leq} & r_i |Q_i|\int_0^1\fiint_{2Q_i} \abs{\nabla  w_h(\tx,\tlt)}\ d\tz\ ds\\
            & \overset{\redlabel{6.8d}{c}}{\apprle} & r_i |Q_i| \la. 
        \end{array}
    \end{equation}
    To obtain \redref{6.8b}{a}, we made use of \descref{W13} along with the bound $|\tx- x_0| \apprle r_i$, to obtain \redref{6.8c}{b}, we enlarged the cylinder noting that $s \in [0,1]$ and finally to obtain \redref{6.8d}{c}, we made use of \descref{W4}.

    \item[Estimate for $II$:] Note that we have assumed without loss of generality that $Q_i$ is centered at $(0,0)$. Since $s \in [0,1]$, we see that $\frac{\tx + s x_0 - x_0}{s} \leq \frac34$ whenever $x_0 \in B_{\frac34r_i}(0)$ and $\tx \in B_{\frac34r_i}(0)$. Thus we can make use of \cref{given} to estimate $II$ as follows:
    \begin{equation}\label{6.9}
        \begin{array}{rcl}
            |II| & \overset{\redlabel{6.9a}{a}}{\leq} & \abs{\int_0^1\iint_{\frac34 Q_i}2s(\tlt-t_0) \iprod{[\aa(x_0+s(\tx-x_0),t_0+s^2(\tlt-t_0),\nabla w + \vec{H})]_h}{\nabla \Psi_i(\tx,\tlt)}\ d\tz\ ds}\\
            && + \abs{\int_0^1\iint_{\frac34 Q_i}2s(\tlt-t_0) \iprod{[\vec{F}^{p-2}\vec{F}]_h(x_0+s(\tx-x_0),t_0+s^2(\tlt-t_0))}{\nabla \Psi_i(\tx,\tlt)}\ d\tz\ ds}\\
            & \overset{\redlabel{6.9b}{b}}{\apprle} & \frac{\la^{2-p}r_i^2}{r_i}\int_0^1\iint_{\frac34 Q_i} \abs{[\aa(x_0+s(\tx-x_0),t_0+s^2(\tlt-t_0),\nabla w + \vec{H})]_h}\ d\tz\ ds\\
            && + \frac{\la^{2-p}r_i^2}{r_i}\int_0^1\iint_{\frac34 Q_i}\abs{[\vec{F}^{p-2}\vec{F}]_h(x_0+s(\tx-x_0),t_0+s^2(\tlt-t_0))}\ d\tz\ ds\\
            & \overset{\redlabel{6.9c}{c}}{\leq} &\frac{\la^{2-p}r_i^2}{r_i}\int_0^1\iint_{\frac34 Q_i} [|\nabla w|+|\vec{H}|]_h^{p-1}(x_0+s(\tx-x_0),t_0+s^2(\tlt-t_0))\ d\tz\ ds\\
            && + \frac{\la^{2-p}r_i^2}{r_i}\int_0^1\iint_{\frac34 Q_i}|[\vec{F}]_h|^{p-1}(x_0+s(\tx-x_0),t_0+s^2(\tlt-t_0))\ d\tz\ ds\\
            & \overset{\redlabel{6.9d}{d}}{\apprle} & r_i |Q_i| \la. 
        \end{array}
    \end{equation}
    To obtain \redref{6.9a}{a}, we made use of the weak formulation of \cref{given}, to obtain \redref{6.9b}{b}, we made use of \descref{W13} and \descref{W1}, to obtain \redref{6.9c}{c}, we made use of \cref{abounded} and finally to obtain \redref{6.9d}{d}, we made use of \descref{W4}.
\end{description}
From \descref{W12}, we note that $\frac{1}{\lVert \Psi_i\rV_{L^1(\frac{3}{4}Q_i)}}  \approx \frac{1}{|Q_i|}$ and hence combining \cref{6.8} and \cref{6.9} into \cref{6.77} proves the desired estimate.
    
    \item[Case $\frac34Q_i$ crosses the lateral boundary:] In this case, we see that $\phi_h^i = 0$, thus we have
    \begin{equation}\label{6.10}
\begin{array}{rcl}
 w_h(z_0)
&=&\frac{1}{\lVert \Psi_i\rV_{L^1(\frac{3}{4}Q_i)}} \int_0^1 \frac{d}{ds}\left(\iint_{\frac{3}{4}Q_i} w_h\lbr x_0+s(\tx-x_0),t_0+s^2(\tlt-t_0)\rbr\Psi_i(\tz) \ d\tz\right)\ ds\\
&& - \frac{1}{\lVert \Psi_i\rV_{L^1(\frac{3}{4}Q_i)}} \iint_{\frac34Q_i} w_h(\tx,\tlt) \Psi_j(\tx,\tlt)  \ d\tz\\
&=&\frac{1}{\lVert \Psi_i\rV_{L^1(\frac{3}{4}Q_i)}}\int_0^1\iint_{\frac{3}{4}Q_i}\iprod{\nabla  w_h\lbr x_0+s(\tx-x_0),t_0+s^2(\tlt-t_0)\rbr}{\tx-x_0}\Psi_i(\tz)\ d\tz\ ds\\
&&+\frac{1}{\lVert \Psi_i\rV_{L^1(\frac{3}{4}Q_i)}}\int_0^1\iint_{\frac{3}{4}Q_i}\pa_t w_h\lbr x_0+s(\tx-x_0),t_0+s^2(\tlt-t_0)\rbr\ 2s(\tlt-t_0)\Psi_i(\tz)\ d\tz\ ds\\
&& - \frac{1}{\lVert \Psi_i\rV_{L^1(\frac{3}{4}Q_i)}} \iint_{\frac34Q_i} w_h(\tz) \Psi_j(\tz)  \ d\tz\\
&=:&\frac{1}{\lVert \Psi_i\rV_{L^1(\frac{3}{4}Q_i)}} (I+II - III) .
\end{array}
\end{equation}

The terms $I$ and $II$ are estimated exactly as \cref{6.8} and \cref{6.9} respectively. In order to estimate $III$, we can apply \cref{poincare} since $w_h = 0$ outside the lateral boundaries. Thus we get
\begin{equation}
\label{6.11}
    \begin{array}{rcl}
        \abs{III} & \leq & r_i \iint_{\frac34Q_i} \abs{\frac{w_h(\tx,\tlt)}{r_i}}   \ d\tz
         \apprle  r_i \iint_{\frac34Q_i} \abs{\nabla w_h(\tx,\tlt)}   \ d\tz
         \overset{\text{\descref{W4}}}{\apprle}   r_i |Q_i| \la.
    \end{array}
\end{equation}

    \item[Case $\frac34Q_i$ crosses the initial boundary:] In this case, again we can proceed as in \cref{6.10} to get
    \begin{equation*}
\begin{array}{rcl}
 w_h(z_0)
&=&\frac{1}{\lVert \Psi_i\rV_{L^1(\frac{3}{4}Q_i)}}\int_0^1\iint_{\frac{3}{4}Q_i}\iprod{\nabla  w_h\lbr x_0+s(\tx-x_0),t_0+s^2(\tlt-t_0)\rbr}{\tx-x_0}\Psi_i(\tz)\lsb{\chi}{[0,T]}\ d\tz\ ds\\
&&+\frac{1}{\lVert \Psi_i\rV_{L^1(\frac{3}{4}Q_i)}}\int_0^1\iint_{\frac{3}{4}Q_i}\pa_t w_h\lbr x_0+s(\tx-x_0),t_0+s^2(\tlt-t_0)\rbr\ 2s(\tlt-t_0)\Psi_i(\tz)\lsb{\chi}{[0,T]}\ d\tz\ ds\\
&& - \frac{1}{\lVert \Psi_i\rV_{L^1(\frac{3}{4}Q_i)}} \iint_{\frac34Q_i} w_h(\tz) \Psi_j(\tz) \lsb{\chi}{[0,T]} \ d\tz\\
&=:&\frac{1}{\lVert \Psi_i\rV_{L^1(\frac{3}{4}Q_i)}} (I+II - III) .
\end{array}
\end{equation*}
    
    and pick up $III$ as the error term which needs to be estimated. Since we are at the initial boundary, we cannot directly apply \cref{poincare} to bound $III$ and instead we proceed analogously to \cite[Estimate (4.15) and (4.16)]{adimurthi2018gradient} to again get the same bound as \cref{6.11}.
\end{description}
Combining all the estimates completes the proof of the lemma.
\end{proof}

We now have the Lipschitz regularity of the function constructed in \cref{lipschitz_extension}, the proof of which can be found in \cite[Lemma 4.16]{adimurthi2018gradient}.
\begin{lemma}
\label{lemma3.15}
The function $\varphi_{\la,h}$ constructed in  \cref{lipschitz_extension} is $C^{0,1}(\mathbb{R}^{n}\times[0,T])$ with respect to the parabolic metric given by
\[
d_\la(z_1,z_2):=\max\{\lv x_2-x_1\rv,\sqrt{\la^{p-2}\lv t_2-t_1\rv}\}.
\]
In particular, the following bound holds for any $z_1, z_2 \in \RR^{n+1}$:
\[
\lv \varphi_{\la,h}(z_2)-\varphi_{\la,h}(z_1)\rv\le C\max\{\lv x_2-x_1\rv,\sqrt{\la^{p-2}\lv t_2-t_1\rv}\},
\]
where $C$ depends on $n,p,q,\lamot,\la,T$ and $\lV w\rV_{L^1(\Om_T)}$.
\end{lemma}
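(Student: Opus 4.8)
The plan is to exploit the two faces of the truncation $\varphi_{\la,h}$. By \descref{W1} and \descref{W4} every $\Psi_i$ is supported in $\tfrac34 Q_i\subset E_\la^c$, so $\varphi_{\la,h}\equiv w_h$ on the (closed) good set $E_\la$, while on the open set $E_\la^c$ the $\{\Psi_i\}$ form a partition of unity (\descref{W3}, \descref{W12}, \descref{W14}) and hence
\[
\varphi_{\la,h}(z)=w_h(z)\lbr 1-\sum_i\Psi_i(z)\rbr+\sum_i\Psi_i(z)\phi_h^i=\sum_i\Psi_i(z)\phi_h^i\txt{for}z\in E_\la^c .
\]
In particular, for $z\in\tfrac34 Q_i\subset E_\la^c$ the estimate $\abs{\varphi_{\la,h}(z)-\phi_h^i}\apprle\min\{\rho,r_i\}\la$ is immediate from \cref{diff}. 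Fixing $z_1,z_2\in\RR^n\times[0,T]$ and putting $R:=d_\la(z_1,z_2)$, I want $\abs{\varphi_{\la,h}(z_1)-\varphi_{\la,h}(z_2)}\apprle\la R$ (the extra factor $\la$ is harmless, since the constant we seek is allowed to depend on $\la$), and I would split the argument according to the position of $z_1,z_2$ and the size of $R$ relative to the local Whitney scales $r_i$.

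First I would handle the case $z_1\in\tfrac12 Q_i$, $z_2\in\tfrac12 Q_k$ with $k\in A_i$ (or $i=k$) and $R\apprle r_i$. Subtracting the constant $\phi_h^i$ and using $\sum_j\lbr\Psi_j(z_1)-\Psi_j(z_2)\rbr=0$ gives
\[
\varphi_{\la,h}(z_1)-\varphi_{\la,h}(z_2)=\sum_j\lbr\Psi_j(z_1)-\Psi_j(z_2)\rbr\lbr\phi_h^j-\phi_h^i\rbr ,
\]
where only the finitely many indices $j\in A_i\cup A_k$ contribute (\descref{W8}); for these, \cref{diff} gives $\abs{\phi_h^j-\phi_h^i}\apprle r_i\la$, while \descref{W13} together with $r_j\approx r_i$ (\descref{W5}) and $R\apprle r_i$ gives $\abs{\Psi_j(z_1)-\Psi_j(z_2)}\apprle\frac{\abs{x_1-x_2}}{r_j}+\frac{\la^{p-2}\abs{t_1-t_2}}{r_j^2}\apprle\frac{R}{r_i}$. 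Summing over the $\apprle1$ surviving indices yields the desired bound $\apprle\la R$.

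The remaining configurations I would reduce to the previous one and to the diagonal case $z_1,z_2\in E_\la$. Whenever $z\in E_\la^c$ lies in $\tfrac12 Q_i$, \descref{W2} produces $\zeta\in E_\la$ with $d_\la(z,\zeta)\apprle r_i$, and combining $\abs{\varphi_{\la,h}(z)-\phi_h^i}\apprle r_i\la$ with \cref{pointwise_poincare} along a short chain of neighbouring Whitney cylinders gives $\abs{\varphi_{\la,h}(z)-w_h(\zeta)}\apprle\la\, d_\la(z,\zeta)$; thus a bad point may always be replaced, at admissible cost, by a nearby good point. For $z_1,z_2\in E_\la$ one has $\varphi_{\la,h}=w_h$, and $\abs{w_h(z_1)-w_h(z_2)}\apprle\la R$ follows from a telescoping chain of parabolic cylinders of $d_\la$-radius comparable to $R$: the spatial increments are controlled by the spatial Poincar\'e inequality together with $g(z_1),g(z_2)\le\la$, and the time increments are controlled exactly as in the proof of \cref{pointwise_poincare}, i.e.\ by testing the weak formulation of \cref{given} and invoking \cref{abounded}. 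Finally, when $R$ exceeds a fixed multiple of $\rho$ or of $\sqrt{\la^{p-2}T}$ I would simply estimate $\abs{\varphi_{\la,h}(z_1)}+\abs{\varphi_{\la,h}(z_2)}\apprle\la\max\{\rho,\sqrt{\la^{p-2}T}\}\apprle\la R$ using the representation above, \cref{diff}, and $w_h=0$ on $\pa_p\Om_T$; it is here that the constant picks up its dependence on $\la$, $T$, $\rho=\diam(\Om)$ and $\norm{w}_{L^1(\Om_T)}$, the last quantity entering through the lower threshold on admissible $\la$ implicit in the construction of $E_\la$ and its Whitney covering.

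The hard part is the control of the \emph{time} oscillation of $w_h$, both inside a single parabolic cylinder and across neighbouring ones: the maximal function $g$ in \cref{maxim} encodes only the spatial gradient, so no a priori bound on $\pa_t w$ is available and the equation itself must be used. The mechanism that supplies it is precisely the estimate of the term $II$ in the proof of \cref{pointwise_poincare} — testing \cref{given} against $\Psi_i$ and using \cref{abounded} and \descref{W13}. Granted that, the rest is a routine, if lengthy, bookkeeping of the finite-overlap and scale-comparability properties \descref{W1}--\descref{W14} of the Whitney decomposition.
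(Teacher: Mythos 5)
The paper does not prove this lemma itself --- it simply cites Lemma 4.16 of \cite{adimurthi2018gradient} --- and your sketch follows the same standard Kinnunen--Lewis route as that reference: telescope the partition-of-unity identity on $E_\la^c$ using \cref{diff} and \descref{W13}, replace a bad endpoint by a nearby good one via \descref{W2} and \descref{W4}, control $w_h$ on $E_\la$ by the maximal-function bound in space and by the tested equation in time exactly as in the estimate of $II$ in the proof of \cref{pointwise_poincare}, and fall back on an $L^\infty$ bound for distant pairs; you also correctly single out the time increment (invisible to $g$ from \cref{maxim}) as the step where the equation must be used. The only looseness in your write-up is that \cref{pointwise_poincare} as stated is for $z\in\frac34Q_i\subset E_\la^c$, so for a good--good pair you cannot quote it directly but must rerun its computation on a cylinder $Q_{R,\la^{2-p}R^2}$ centered at a good point, with the maximal-function bound now coming from $g\le\la$ at the center rather than from \descref{W4}, and that both this step and your chain of Whitney cylinders shrinking onto a point $\zeta\in E_\la$ give the estimate only for a.e.\ pair (Lebesgue points of $w_h$ and $\nabla w_h$), after which one takes the continuous representative fixed by the smooth values on $E_\la^c$; neither is a gap in the idea, only in the bookkeeping you explicitly defer.
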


We now prove a weighted estimate which follows similarly to \cite[Theorem 3.4]{bulicek2018well}.
\begin{lemma}\label{L7}
Suppose that $\om\in A_\frac{p}{q}(\Om_T)$ where $q$ as defined in \cref{def_q} and $\lvert\na w \rvert^{q},\lvert\vec{F}\rv^{q},\lvert\vec{H}\rv^{q}\in L^\frac{p}{q}_\om(\Om_T)$ hold. Then the function $g(z)$ constructed in \eqref{maxim} satisfies
\[
\iint_{\Om_T}g^p\om\ dz\leq C\iint_{\Om_T}\left(\lvert \nabla  w\rv^{p}+\lvert \vec{F}\rvert^{p}+\lvert \vec{H}\rv^{p}\right)\om\ dz,
\]
and
\[
\iint_{\Om_T}\lvert \nabla  \varphi_{\la,h}\rv^p\om\ dz\leq C\iint_{\Om_T}g^p\om\ dz,
\]
where $C = C\lbr n,p,q,\lamot,[\om]_{\frac{p}{q}}\rbr$.
\end{lemma}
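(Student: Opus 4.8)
The plan is to establish the two inequalities separately, in the order stated, using the weighted maximal function estimate \cref{weigted_lemma3} for the first and the Whitney/Lipschitz-truncation machinery together with the pointwise gradient bound \cref{3.34} for the second. For the first inequality, observe that by definition
\[
g(z)^{p}=\mathcal{M}\lbr \bgh{\lv\na w\rv^{q}+\lv\vec F\rv^{q}+\lv\vec H\rv^{q}}\rchi_{\Om_T}\rbr^{p/q}(z),
\]
so raising to the power $p/q>1$ and integrating against $\om$ reduces the claim to a bound on $\iint_{\Om_T}\mathcal{M}(G\rchi_{\Om_T})^{p/q}\om\,dz$ where $G:=\lv\na w\rv^{q}+\lv\vec F\rv^{q}+\lv\vec H\rv^{q}$. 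Since $\om\in A_{p/q}(\Om_T)$ with $[\om]_{p/q}$ controlled, \cref{weigted_lemma3} applied with exponent $p/q$ gives
\[
\iint_{\Om_T}\mathcal{M}(G\rchi_{\Om_T})^{p/q}\om\,dz\apprle \iint_{\Om_T}\lv G\rv^{p/q}\om\,dz\apprle\iint_{\Om_T}\lbr\lv\na w\rv^{p}+\lv\vec F\rv^{p}+\lv\vec H\rv^{p}\rbr\om\,dz,
\]
the last step being the elementary inequality $(a+b+c)^{p/q}\apprle a^{p/q}+b^{p/q}+c^{p/q}$. This is the easy half.

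For the second inequality, I would use the layer-cake formula together with the structure of the Whitney decomposition. From \cref{lemma3.15}, $\varphi_{\la,h}$ is Lipschitz, and from \cref{3.34} we have $\lv\na\varphi_{\la,h}(z)\rv\apprle\la$ on $E_\la^{c}$, while on $E_\la$ one expects $\na\varphi_{\la,h}=\na w_h$ (up to the truncation construction) with $\lv\na w_h(z)\rv\le g(z)\le\la$ there by the Lebesgue differentiation theorem and the maximal-function bound. The standard argument (as in \cite[Theorem 3.4]{bulicek2018well}) is: for each $\la>0$, split $\iint\lv\na\varphi_{\la,h}\rv^{p}\om$ over $E_\la$ and $E_\la^{c}$; on $E_\la$ bound the integrand by $g^{p}\om$; on $E_\la^{c}=\bigcup_j\frac12 Q_j$ use \descref{W7} (bounded overlap) and \descref{W4} together with \descref{W13} and the pointwise Poincaré estimate \cref{pointwise_poincare} to bound $\fiint_{\frac34 Q_j}\lv\na\varphi_{\la,h}\rv^{p}$ by $\la^{p}$, then control $\sum_j\om(\tfrac34 Q_j)$ on each level set. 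More cleanly, I would run the level-set argument: write $\iint_{\Om_T}\lv\na\varphi_{\la,h}\rv^{p}\om\,dz = p\int_0^\infty\la^{p-1}\om(\{z:\lv\na\varphi_{\la,h}(z)\rv>\la\})\,d\la$ and show $\{z:\lv\na\varphi_{\la,h}(z)\rv>c\la\}\subset E_\la^{c}$ via \cref{3.34}, whence $\om(\{\lv\na\varphi_{\la,h}\rv>c\la\})\le\om(E_\la^{c})=\om(\{g>\la\})$; integrating back gives $\iint\lv\na\varphi_{\la,h}\rv^{p}\om\apprle\iint g^{p}\om$.

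The main obstacle is the bookkeeping on $E_\la^{c}$: one must verify that the gradient bound \cref{3.34} is genuinely uniform in $h$ and $\la$ (the constant in \cref{lemma3.15} depends on $\la$ and $\lV w\rV_{L^1}$, but the one in \cref{3.34} does not), and that the truncation $\varphi_{\la,h}$ coincides sufficiently with $w_h$ on $E_\la$ so that $\lv\na\varphi_{\la,h}\rv\le C g$ holds a.e.\ there — this requires \descref{W12}, \descref{W14} and the partition-of-unity identity to see that the correction term $\sum_i\Psi_i(w_h-\phi_h^i)$ has gradient controlled by $g$ on the good set, using that the $\phi_h^i$ are averages of $w_h$ over cubes meeting $E_\la$. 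Once the inclusion $\{z:\lv\na\varphi_{\la,h}(z)\rv>c\la\}\subset\{z:g(z)>\la\}$ is established, the layer-cake integration is routine and the weight $\om$ plays no role beyond being a fixed nonnegative measure. Combining the two displayed inequalities yields the lemma with $C=C(n,p,q,\lamot,[\om]_{p/q})$.
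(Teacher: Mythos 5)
Your first inequality is exactly the paper's argument (\cref{weigted_lemma3} plus the elementary bound $(a+b+c)^{p/q}\apprle a^{p/q}+b^{p/q}+c^{p/q}$), and your $E_\la$/$E_\la^c$ split for the second is also the paper's approach, but the paper's version is simpler than your sketch: on $E_\la$, $\varphi_{\la,h}=w_h$ so $|\nabla\varphi_{\la,h}|=|\nabla w_h|\le g$ a.e., while on $E_\la^c$, \cref{3.34} gives $|\nabla\varphi_{\la,h}|\apprle\la$ and $g>\la$ by \cref{elam}, so the resulting pointwise bound $|\nabla\varphi_{\la,h}|\apprle g$ a.e.\ is simply integrated against $\om$, with no Whitney-cube averaging, bounded-overlap counting, or layer-cake formula required. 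Note also that your ``more cleanly'' layer-cake alternative is flawed as written: $\varphi_{\la,h}$ is built for a single fixed $\la$, so the running variable in $p\int_0^\infty\mu^{p-1}\om(\{|\nabla\varphi_{\la,h}|>\mu\})\,d\mu$ must be distinct from $\la$, and you only control the level set at $\mu\approx\la$, not over the whole range --- the direct pointwise bound is the correct way to close the argument.
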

\begin{proof}
From \cref{weigted_lemma3}, we get
\[
\begin{array}{ll}
\iint_{\Om_T}g^p\om\ dz
&=\iint_{\Om_T}\mm((\lvert \nabla  u\rv^{q}+\lvert \vec{F}\rvert^{q}+\lvert\vec{H}\rv^{q})\rchi_{\Om_T})^\frac{p}{q}\om\ dz \apprle\iint_{\Om_T}\left(\lvert\na u \rvert^p+\lvert \vec{F}\rvert^p+\lvert\vec{H}\rv^p\right)\om\ dz,
\end{array}
\]
which proves the first assertion.

To obtain the second assertion, we proceed as follows:
\[
\begin{array}{rcl}
\iint_{\Om_T}\lvert\na \varphi_{\la,h}\rv^p\om\ dz
&\overset{\cref{lipschitz_extension}}{=}&\iint_{E_\la^c}\lvert\na \varphi_{\la,h}\rv^p\om\ dz+\iint_{E_\la}\lvert \nabla  w_h\rv^p\ \om\ dz \vspace{1em}\\
&\overset{\text{\cref{3.34} {and} \cref{maxim}}}{\apprle}& \iint_{E_\la^c}\la^p\om\ dz+\iint_{E_\la}g^p\ \om\ dz \vspace{1em}\\
&\overset{\text{\cref{elam}}}{\apprle}& \iint_{\Om_T}g^p\om\ dz.
\end{array}
\]
\end{proof}

\begin{lemma}\label{L9}
Suppose that $\om\in A_\frac{p}{q}(\Om_T)$ where $q$ as defined in \cref{def_q} and $\lvert\na w \rvert^{q},\lvert\vec{F}\rv^{q},\lvert\vec{H}\rv^{q}\in L^\frac{p}{q}_\om(\Om_T)$ hold. Then 
\[
\iint_{\Om_T}\lvert \nabla  \varphi_{\la,h}-\nabla  w_h\rv^p\om\ dz\apprle_{(n,p,q,\lamot)}\iint_{\{\varphi_{\la,h}\ne w_h\}}\la^p\om\ dz+\iint_{\{\varphi_{\la,h}\ne w_h\}}\lvert \nabla  w_h\rv^p\om\ dz.
\]
\end{lemma}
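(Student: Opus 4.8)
The plan is to split the integral over $\Om_T$ according to whether $\varphi_{\la,h}$ and $w_h$ coincide, and then to control $\nabla\varphi_{\la,h}$ on the complement of the good set by the pointwise bound \cref{3.34}. First I would note that $\varphi_{\la,h}\in W^{1,1}_{\loc}(\Om_T)$ (it is Lipschitz by \cref{lemma3.15}) and $w_h\in L^p(0,T;W_0^{1,p}(\Om))$, so $\varphi_{\la,h}-w_h\in W^{1,1}_{\loc}(\Om_T)$; by the standard fact that a Sobolev function has vanishing weak gradient Lebesgue-a.e.\ on each of its level sets, $\nabla\varphi_{\la,h}=\nabla w_h$ a.e.\ on $\{\varphi_{\la,h}=w_h\}$. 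Since $\om\,dz$ is absolutely continuous with respect to Lebesgue measure, this gives
\[
\iint_{\Om_T}\lv\na\varphi_{\la,h}-\na w_h\rv^p\om\ dz=\iint_{\{\varphi_{\la,h}\ne w_h\}}\lv\na\varphi_{\la,h}-\na w_h\rv^p\om\ dz,
\]
and the left-hand side is finite by \cref{L7}.

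Next I would verify that $\{\varphi_{\la,h}\ne w_h\}\subset E_\la^c$. This is read off directly from the construction \cref{lipschitz_extension}: each cutoff $\Psi_i$ is supported in $\frac34Q_i$, while \descref{W4} gives $8Q_i\subset E_\la^c$, so every $\Psi_i$ vanishes on $E_\la$; hence the correction $\sum_i\Psi_i(w_h-\phi_h^i)$ vanishes on $E_\la$, i.e.\ $\varphi_{\la,h}=w_h$ there.

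Then on $E_\la^c$ I would invoke \cref{3.34}: every $z\in E_\la^c$ belongs to some $\frac34Q_i$ by \descref{W3}, so $\lv\na\varphi_{\la,h}(z)\rv\apprle_{(n,p,q,\lamot)}\la$ for a.e.\ $z\in E_\la^c$. Combining this with the elementary bound $\lv\na\varphi_{\la,h}-\na w_h\rv^p\le 2^{p-1}\bgh{\lv\na\varphi_{\la,h}\rv^p+\lv\na w_h\rv^p}$ and integrating against $\om$ over $\{\varphi_{\la,h}\ne w_h\}$ yields
\[
\iint_{\{\varphi_{\la,h}\ne w_h\}}\lv\na\varphi_{\la,h}-\na w_h\rv^p\om\ dz\apprle_{(n,p,q,\lamot)}\iint_{\{\varphi_{\la,h}\ne w_h\}}\la^p\om\ dz+\iint_{\{\varphi_{\la,h}\ne w_h\}}\lv\na w_h\rv^p\om\ dz,
\]
which is the assertion.

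The argument is essentially routine and I do not expect a serious obstacle; the only two points needing a word of justification are the Lebesgue-a.e.\ identification of the two gradients on the coincidence set (the usual level-set lemma for Sobolev functions) and the inclusion $\{\varphi_{\la,h}\ne w_h\}\subset E_\la^c$, which rests on the support property of the Whitney partition of unity together with \descref{W3}--\descref{W4}. These two facts are exactly what guarantees that the pointwise gradient bound \cref{3.34} is available precisely on the set where $\varphi_{\la,h}$ and $w_h$ differ.
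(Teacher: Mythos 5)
Your proposal is correct and follows the same route as the paper's own proof: restrict the integral to $\{\varphi_{\la,h}\ne w_h\}$, note that this set lies in $E_\la^c$, apply the pointwise gradient bound \cref{3.34} there, and conclude with the elementary inequality $|a-b|^p\apprle|a|^p+|b|^p$. You additionally spell out (via the Sobolev level-set lemma) why the integral vanishes on the coincidence set, a step the paper states without comment; otherwise the two arguments are identical.
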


\begin{proof}
From \cref{lipschitz_extension}, we see that  $\{\varphi_{\la,h}\ne w_h\}\subseteq E_\la^c$. Thus we have the following sequence of estimates:
\[
\begin{array}{rcl}
\iint_{\Om_T}\lvert \nabla \varphi_{\la,h}-\nabla  w_h\rv^p\om\ dz
&=&\iint_{\{\varphi_{\la,h}\ne w_h\}}\lvert \nabla \varphi_{\la,h}-\nabla  w_h\rv^p\om\ dz\vsp\\
&\apprle& \iint_{\{\varphi_{\la,h}\ne w_h\}}\lvert \nabla  \varphi_{\la,h}\rv^p\om\ dz+\iint_{\{\varphi_{\la,h}\ne w_h\}}\lvert \nabla  w_h\rv^p\om\ dz\vsp\\
&\overset{\cref{3.34}}{\apprle}&\iint_{\{\varphi_{\la,h}\ne w_h\}}\la^p\om\ dz+\iint_{\{\varphi_{\la,h}\ne w_h\}}\lvert \nabla  w_h\rv^p\om\ dz.
\end{array}
\]
This completes the proof of the lemma.
\end{proof}

We now prove a second weighted estimate which follows similarly to \cite[Theorem 3.1]{bulicek2018well}.
\begin{lemma}\label{L6_L8}
Suppose that $\om\in A_\frac{p}{q}(\Om_T)$ where $q$ as defined in \cref{def_q} and $\lvert\na w \rvert^{q},\lvert\vec{F}\rv^{q},\lvert\vec{H}\rv^{q}\in L^\frac{p}{q}_\om(\Om_T)$ hold. Then
\begin{equation}\label{lm6.8_eqn}
\iint_{\Om_T}\lvert\pa_t\varphi_{\la,h}(\varphi_{\la,h}-w_h)\rvert\om\ dz \apprle_{(n,p,q,\lamot)}\la^p \om(\elam^c).
\end{equation}
\end{lemma}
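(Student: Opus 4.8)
The plan is to estimate the time-derivative term by testing the weak formulation of \cref{given} against a suitable cutoff of $\varphi_{\la,h}-w_h$ on each Whitney cylinder, exactly in the spirit of \cite[Theorem 3.1]{bulicek2018well}. First I would recall that $\varphi_{\la,h}-w_h = -\sum_i \Psi_i(w_h-\phi_h^i)$ is supported in $\elam^c = \bigcup_i \frac12 Q_i$, and that by the finite-overlap property \descref{W7} it suffices to bound, for each $i$, the quantity $\iint_{\frac34 Q_i} |\pa_t\varphi_{\la,h}|\,|w_h-\phi_h^i|\,\om\ dz$ and sum over $i$. On $\frac34 Q_i$ we have $\pa_t\varphi_{\la,h} = \pa_t w_h - \sum_{j\in A_i}\pa_t(\Psi_j(w_h-\phi_h^j))$; using \descref{W14}, $\sum_{j\in A_i}\Psi_j \equiv 1$ on $\frac34 Q_i$, so $\sum_{j\in A_i}\Psi_j \pa_t w_h = \pa_t w_h$ and the $\pa_t w_h$ terms telescope, leaving $\pa_t\varphi_{\la,h} = -\sum_{j\in A_i}(w_h-\phi_h^j)\pa_t\Psi_j$ on $\frac34 Q_i$. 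Since $\sum_{j\in A_i}\pa_t\Psi_j = 0$ there, we may further replace $w_h-\phi_h^j$ by $w_h-\phi_h^i$ modulo the differences $\phi_h^i-\phi_h^j$, which are controlled by $\min\{\rho,r_i\}\la$ via \cref{diff}.

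The key pointwise inputs are then: the bound $|\pa_t\Psi_j| \apprle \la^{p-2} r_j^{-2}$ from \descref{W13}, the comparability $r_j \approx r_i$ for $j\in A_i$ from \descref{W5}, the bound $\tfrac{|w_h(z)-\phi_h^j|}{r_i} \apprle \la$ from \cref{pointwise_poincare}, and $|\phi_h^i-\phi_h^j| \apprle r_i\la$ from \cref{diff}. Combining these gives $|\pa_t\varphi_{\la,h}(z)| \apprle \la^{p-2} r_i^{-2} \cdot r_i\la = \la^{p-1} r_i^{-1}$ on $\frac34 Q_i$, and $|w_h(z)-\phi_h^i| \apprle r_i\la$ on $\frac34 Q_i$ as well. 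Hence on each $\frac34 Q_i$,
\[
|\pa_t\varphi_{\la,h}|\,|\varphi_{\la,h}-w_h| \apprle |\pa_t\varphi_{\la,h}|\,|w_h-\phi_h^i| \apprle \frac{\la^{p-1}}{r_i}\cdot r_i\la = \la^p,
\]
where I have also used $|\varphi_{\la,h}-w_h| = |\sum_j\Psi_j(w_h-\phi_h^j)| \apprle \max_{j\in A_i}|w_h-\phi_h^j| \apprle r_i\la$ on $\frac34 Q_i$. Therefore
\[
\iint_{\frac34 Q_i}|\pa_t\varphi_{\la,h}(\varphi_{\la,h}-w_h)|\,\om\ dz \apprle \la^p\,\om\lbr\tfrac34 Q_i\rbr.
\]
Summing over $i$ and invoking the bounded-overlap estimate \descref{W7} together with $\bigcup_i \frac34 Q_i \subset \elam^c$ (each point of $\elam^c$ lies in at most $c(n)$ of the dilated cylinders, and $\om$ is an $A_{p/q}$-measure so $\sum_i\om(\tfrac34 Q_i)\apprle \om(\elam^c)$), we obtain $\iint_{\Om_T}|\pa_t\varphi_{\la,h}(\varphi_{\la,h}-w_h)|\,\om\ dz \apprle \la^p\,\om(\elam^c)$, which is \cref{lm6.8_eqn}.

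The main obstacle is the bookkeeping near the lateral and initial boundaries, where $\phi_h^i = 0$ and one must verify that the pointwise Poincaré-type bound $\tfrac{|w_h(z)|}{r_i}\apprle\la$ still holds — this is precisely the content of the boundary cases in \cref{pointwise_poincare} (using \cref{poincare} at the lateral boundary, and the argument from \cite[Estimates (4.15)–(4.16)]{adimurthi2018gradient} at the initial boundary), so once that lemma is in hand the present estimate is purely a matter of assembling the Whitney-covering bounds. A secondary subtlety is that $\pa_t\varphi_{\la,h}$ need not exist classically; as in \cite{bulicek2018well} one interprets the left-hand side distributionally (or works with the difference quotients / Steklov averages already built into $w_h$), but since $\Psi_j\in C_c^\infty$ and $w_h$ is a Steklov average, $\pa_t\varphi_{\la,h}$ is a genuine function on each $\frac34 Q_i$ and the computation above is legitimate there.
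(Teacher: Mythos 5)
Your proof is correct and follows essentially the same route as the paper: on each $\frac34 Q_i$ you expand $\partial_t\varphi_{\lambda,h}$ and $\varphi_{\lambda,h}-w_h$ via the Whitney partition, exploit $\sum_{j\in A_i}\partial_t\Psi_j=0$ to insert $\phi_h^i$, and then combine \descref{W13}, \cref{pointwise_poincare} and \cref{diff} to obtain the pointwise bound $|\partial_t\varphi_{\lambda,h}(\varphi_{\lambda,h}-w_h)|\apprle\lambda^p$, exactly as in the paper. The only cosmetic difference is in the final step: the paper integrates this pointwise bound directly over $E_\lambda^c$, whereas you sum over the Whitney cubes and invoke the finite-overlap property \descref{W7} (for which, incidentally, no appeal to $\omega\in A_{p/q}$ is needed — the overlap bound alone gives $\sum_i\omega(\tfrac34 Q_i)\apprle\omega(E_\lambda^c)$).
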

\begin{proof}
From \cref{lipschitz_extension}, we see that if $z \in \elam$, then $\varphi_{\la,h}(z) = w_h(z)$ and hence the term on the left hand side of \cref{lm6.8_eqn} is zero. Thus, we only have to integrate the term on the left hand side of \cref{lm6.8_eqn} over $\elam^c$.

Let $z \in \elam^c$, then there exists an $i \in \NN$ such that $z \in \frac34Q_i$. Thus we get
\[
\begin{array}{rcl}
\lvert\pa_t\varphi_{\la,h}(z)(\varphi_{\la,h}(z)-w_h(z))\rvert&\overset{\cref{lipschitz_extension}}{=}&\abs{\lbr \sum_{j\in A_i}\pa_t\Psi_j(z) \phi_h^j\rbr \lbr \sum_{j\in A_i}\Psi_j(z)(w_h(z)-\phi_h^j)\rbr }\\
&\overset{\text{\descref{W14}}}{=}&\abs{\lbr \sum_{j\in A_i}\pa_t\Psi_j(z) (\phi_h^j - \phi_h^i)\rbr \lbr \sum_{j\in A_i}\Psi_j(z)(w_h(z)-\phi_h^j)\rbr }\\
&\overset{\text{\descref{W13}}}{\apprle}&\la^{p-2}\left(\sum_{j\in A_i}\frac{\lvert \phi_h^j-\phi_h^i\rv}{r_i}\right)\left(\sum_{j\in A_i}\frac{\lvert w_h(z)-\phi_h^j\rv}{r_i}\right)\vspace{1em}\\
&\overset{\text{\cref{pointwise_poincare} and \cref{diff}}}{\apprle}& \la^{p}.
\end{array}
\]
Making use of the previous pointwise bound, we get
\[
\iint_{\Om_T}\lvert\pa_t\varphi_{\la,h}(\varphi_{\la,h}-w_h)\rvert\om\ dz
=\iint_{E_\la^c}\lvert\pa_t\varphi_{\la,h}(\varphi_{\la,h}-w_h)\rvert\om\ dz\vsp
 \apprle \la^p \om(\elam^c).
\]
\end{proof}

\subsection{Proof of the compactness theory.}

We now prove the quasilinear analogue of \cite[Theorem 3.1]{bulicek2018well}.
\begin{theorem}\label{lip_seq}
Let $1 < p < \infty$ and $\max\{1,p-1\}\leq q < p$ be given. Let $\om\in A_\frac{p}{q}$ be an Muckenhoupt weight and let $\{\vec{F}^k\}_{k\ge1}$, $\{\vec{H}^k\}_{k\ge1}$ be any two sequences in $L^q(\Om_T) \cap L^{p}_{\om}(\Om_T)$. Corresponding to these vector fields, let $w^k  \in L^q(0,T;W^{1,q}_0(\Om))\cap L^p_{\om}(0,T;W^{1,p}_{\om}(\Om))$ be a very weak solution of 
\[
\begin{cases}
w^k_t-\dv\aa(z,\nabla w^k+\vec{H}^k)=\dv\lv\vec{F}^k\rv^{p-2}\vec{F}^k&\text{ in }\Om_T,\\
w^k=0&\text{ on }\pa_p\Om_T.
\end{cases}
\] 
Furthermore,  assume that there exists constant $\mathbf{C}_1$ such that
\begin{equation}\label{global_assume}
\sup_{k\ge1}\left(\lVert\vec{F}^k \rVert_{L^{q}(\Om_T)}+\lVert \vec{H}^k\rVert_{L^{q}(\Om_T)}+\lVert \na w^k\rVert_{L^{q}(\Om_T)}\right)\le \mathbf{C}_1,
\end{equation}
and
\begin{equation}\label{global_weight_assume}
\sup_{k\ge1}\left(\lVert \vec{F}^k\rV_{L^p_\om(\Om_T)}+\lVert \vec{H}^k\rV_{L^p_\om(\Om_T)}+\lVert\na w^k\rV_{L^p_\om(\Om_T)}\right)\le \mathbf{C}_1.
\end{equation}
Then for any fixed $\la>1$, there exists a sequence $\{\varphi_{\la,h}^k\}_{k\ge1}\subset L^q(0,T;W^{1,q}_0(\Om))$ satisfying following:

\begin{gather}
\lVert \nabla  \varphi_{\la,h}^k \rVert_\infty+\sup_{z_1,z_2\in\Om_T}\frac{\lvert \varphi_{\la,h}^k(z_1)-\varphi_{\la,h}^k(z_2)\rvert}{\lvert x_1-x_2\rv+\sqrt{\la^{2-p}\lvert t_1-t_2\rv}}\le C(n,p,q,\lamot,\la,T,\mathbf{C}_1),\label{conc1}\\
\iint_{\Om_T}\lvert \nabla  \varphi_{\la,h}^k\rv^p\om\ dz \le C(n,p,q,\lamot,[\om]_{\frac{p}{q}},\mathbf{C}_1),\label{conc2}\\
\iint_{\Om_T}\lvert \pa_t\varphi_{\la,h}^k(\varphi_{\la,h}^k-w_h^k)\rvert\om\ dz\apprle_{(n,p,q,\lamot,\mathbf{C}_1,[w]_{\frac{p}{q}})}\frac{1}{\la^{p-1}},\label{conc3}\\
\sup_{k\ge1}\om\left( \{z\in\Om_T\mid\varphi_{\la,h}^k(z)\ne w_h^k(z)\}\right)\apprle_{(n,p,q,\lamot,[w]_{\frac{p}{q}},\mathbf{C}_1)}\frac{1}{\la^{2p}},\label{conc4}\\
\sup_{k\ge1}\left\lv \{z\in\Om_T\mid\varphi_{\la,h}^k(z)\ne w_h^k(z)\}\right\rv\apprle_{(n,p,q,\lamot,\mathbf{C}_1)} \frac{1}{\la^{q}}.\label{conc5}
\end{gather}

\end{theorem}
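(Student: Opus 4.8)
The plan is to apply the single-solution Lipschitz truncation machinery developed above (Lemmas \ref{lemma3.15}, \ref{L7}, \ref{L9}, \ref{L6_L8}) uniformly along the sequence, and then to convert the uniform $L^q$ and weighted $L^p$ bounds on the data into the five conclusions. First I would fix $\la>1$ and $h\in(0,T)$, and for each $k$ define $g^k$ by \eqref{maxim} and the good set $E_\la^k$ by \eqref{elam} with $w,\vec F,\vec H$ replaced by $w^k,\vec F^k,\vec H^k$; then let $\varphi_{\la,h}^k$ be the corresponding Lipschitz extension \eqref{lipschitz_extension}. Conclusion \eqref{conc1} is then immediate from \cref{lemma3.15} together with \cref{3.34}: the pointwise gradient bound $|\nabla\varphi_{\la,h}^k|\apprle\la$ holds on $E_\la^{k,c}$ and $\varphi_{\la,h}^k=w_h^k$ has the same gradient as $w_h^k$ on $E_\la^k$ — wait, that is not bounded; rather the correct statement is that $\nabla\varphi_{\la,h}^k$ is globally bounded by $C\la$ because on $E_\la^k$ one uses $|\nabla w_h^k|\le g^k\le\la$ by definition of the good set and the maximal-function bound. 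The parabolic Hölder seminorm bound is exactly \cref{lemma3.15}, and the constant there depends on $\lV w^k\rV_{L^1(\Om_T)}$, which is controlled by $\mathbf{C}_1$ via \eqref{global_assume} and Poincaré (\cref{poincare}), so the bound is uniform in $k$.

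Next I would establish \eqref{conc2}: since $\om\in A_{p/q}$ and $|\nabla w^k|^q,|\vec F^k|^q,|\vec H^k|^q\in L^{p/q}_\om(\Om_T)$ with norms bounded by $\mathbf{C}_1^q$ via \eqref{global_weight_assume}, \cref{L7} gives $\iint_{\Om_T}|\nabla\varphi_{\la,h}^k|^p\om\,dz\apprle\iint_{\Om_T}g^{k,p}\om\,dz\apprle\iint_{\Om_T}(|\nabla w^k|^p+|\vec F^k|^p+|\vec H^k|^p)\om\,dz\le C\mathbf{C}_1^p$, with $C=C(n,p,q,\lamot,[\om]_{p/q})$. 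For \eqref{conc3}, \cref{L6_L8} yields $\iint_{\Om_T}|\pa_t\varphi_{\la,h}^k(\varphi_{\la,h}^k-w_h^k)|\om\,dz\apprle\la^p\om(E_\la^{k,c})$, so it remains to show $\om(E_\la^{k,c})\apprle\la^{-2p-1}$; I would in fact prove the slightly stronger decay $\om(E_\la^{k,c})\apprle\la^{-2p}$ first and use it both here and for \eqref{conc4}. That decay comes from \cref{weight_level_set} applied to $g^k$ with exponents chosen between $p/q$ and $p$: $\om(E_\la^{k,c})=\om(\{g^k>\la\})\le\la^{-p}\iint_{\{g^k>\la\}}g^{k,p}\om\,dz$, and since $\iint_{\Om_T}g^{k,p}\om\,dz$ is uniformly bounded by \cref{L7}, a Chebyshev-type estimate with the extra room coming from higher integrability of $g^k$ (again \cref{L7} with the weighted maximal bound \cref{weigted_lemma3}, which self-improves the $A_{p/q}$ class via \cref{reverse_holder}) upgrades this to $\la^{-2p}$. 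Combining $\om(E_\la^{k,c})\apprle\la^{-2p}$ with \cref{L6_L8} gives \eqref{conc3} since $\la^p\cdot\la^{-2p}=\la^{-p}\le\la^{-(p-1)}$ for $\la>1$, actually $\la^{p}\la^{-2p}=\la^{-p}$ which is $\le\la^{-(p-1)}$; and since $\{\varphi_{\la,h}^k\ne w_h^k\}\subseteq E_\la^{k,c}$ by \eqref{lipschitz_extension}, \eqref{conc4} follows directly. Finally \eqref{conc5} is the unweighted analogue: $|\{\varphi_{\la,h}^k\ne w_h^k\}|\le|E_\la^{k,c}|=|\{g^k>\la\}|\le\la^{-q}\iint_{\Om_T}g^{k,q}\,dz$, and $\iint_{\Om_T}g^{k,q}\,dz=\iint_{\Om_T}\mm((|\nabla w^k|^q+|\vec F^k|^q+|\vec H^k|^q)\rchi_{\Om_T})\,dz$ is bounded by $C(n)\iint_{\Om_T}(|\nabla w^k|^q+\cdots)\,dz\le C\mathbf{C}_1^q$ using the weak $(1,1)$ bound — more precisely, the strong $(q',q')$ bound of \cref{max_bound} applied with a dummy exponent, or directly since $g^{k,q}\le\mm(\cdots)$ and the maximal operator is bounded on $L^{r}$ for $r>1$; here one takes $r$ slightly above $1$ and interpolates, giving the uniform constant.

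The main obstacle, as in \cite{bulicek2018well}, is making the exponent decay in \eqref{conc3}–\eqref{conc4} honest: a bare Chebyshev inequality only delivers $\om(E_\la^{k,c})\apprle\la^{-p}$, which is insufficient. The fix is to exploit that $g^{k,p}\om$ is not merely integrable but lies in a slightly better space — either by using the self-improvement $\om\in A_{p/q-\de}$ from \cref{reverse_holder} to run the level-set integral of \cref{weight_level_set} with a power strictly below $p$, thereby trading integrability for decay, or by a bootstrap on $\la$ exactly along the lines of the covering argument in \cref{section4} (Lemmas \ref{lemma5.7}–\ref{lemma5.11}). I would follow the second route: cover $E_\la^{k,c}$ by the parabolic Whitney cylinders of \cref{whitney_decomposition}, on each of which the rescaled solution has controlled oscillation via \cref{pointwise_poincare}, and sum the weighted measures using the $A_\infty$ estimate \cref{a_infinity} and the disjointness \descref{W6}; the comparison with the homogeneous equation and the good-$\la$ inequality then force the gain of the extra power of $\la$. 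Once this decay is in hand, all five conclusions follow by assembling the lemmas as indicated, and uniformity in $k$ is automatic since every constant produced depends only on $n,p,q,\lamot,[\om]_{p/q}$, and $\mathbf{C}_1$, never on $k$ or $h$.
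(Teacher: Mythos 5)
Your outline of the single-$k$ machinery is essentially right, and you correctly isolate the crux: a bare Chebyshev bound from \cref{L7} only gives $\om(\Om_T\setminus E_\la^{k})\apprle\la^{-p}$, which is insufficient for \cref{conc4} and, more seriously, makes the right-hand side of \cref{L6_L8} come out as $\la^p\cdot\la^{-p}=O(1)$ rather than decaying, so \cref{conc3} fails outright. The problem is that neither of your two proposed fixes supplies the missing decay.

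Your Option 1 (self-improvement $\om\in A_{p/q-\de}$ plus \cref{weight_level_set}) does not help: the bottleneck is not the weight class but the integrability of $g_k^p$ against $\om$, and the hypotheses \cref{global_assume}, \cref{global_weight_assume} give no uniform higher integrability of $g_k$ beyond $L^p_\om$. Your Option 2 (a good-$\la$ argument along the lines of \cref{lemma5.7}--\cref{lemma5.11}) is not available here at all: the comparison-to-homogeneous-equation step underlying those lemmas requires the $(\ga,S_0)$-vanishing assumption on $(\aa,\Om)$, and \cref{lip_seq} makes no such smallness assumption. The whole point of this theorem (following \cite{bulicek2018well}) is to be applicable to the difference equation \cref{diff_eq}, where no structural smallness is available.

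The mechanism the paper actually uses is a dyadic pigeonhole selection of a $k$-dependent truncation level, and this is the idea your proposal is missing. Fix a large integer $m_0$ and partition the superlevel set $\{g_k>\la\}$ into the dyadic annuli $\{\la^{2^m}<g_k\le\la^{2^{m+1}}\}$, $m=0,\dots,m_0$. Since $\iint_{\Om_T}g_k^p\,\om\,dz\le C(\mathbf{C}_1,[\om]_{p/q})$ uniformly in $k$, averaging produces an index $m_k\le m_0$ for which the weighted integral of $g_k^p$ over the $m_k$-th annulus is at most $C/m_0$. Setting $\la_k:=\la^{2^{m_k}}$ and defining $\varphi_{\la,h}^k$ as the Lipschitz truncation of $w_h^k$ at the level $\la_k$ (not at $\la$), one splits $\{g_k>\la_k\}=\{\la_k<g_k\le\la_k^2\}\cup\{g_k>\la_k^2\}$, bounds the first piece by $C/m_0$ and the second by $\la_k^{-p}\iint g_k^p\om$, and obtains $\la_k^p\,\om(\Om_T\setminus E_{\la}^k)\apprle m_0^{-1}+\la_k^{-p}$. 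Choosing $m_0=\la^p$ and using $\la\le\la_k$ then gives $\la_k^p\,\om(\Om_T\setminus E_\la^k)\apprle\la^{-p}$, from which \cref{conc3} and \cref{conc4} both follow, while \cref{conc1}, \cref{conc2}, \cref{conc5} follow from \cref{lemma3.15}, \cref{L7}, and the weak $(1,1)$ maximal inequality at level $\la_k\ge\la$ exactly as you indicate. Note also that in your argument for \cref{conc5} the quantity $\iint_{\Om_T}g_k^q\,dz$ need not be finite (the inner function is only $L^1$, so $g_k^q=\mm(\cdot)$ is only weak $L^1$); the weak-type estimate at the raised level $\la_k$ is the correct tool there.
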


\begin{proof}
For each $k\ge1$, let
\begin{equation*}
g_k(z):=\mm\left(\left[\lvert  \nabla  w^k\rv^{q}+\lvert\na\vec{F}^k\rv^{q}+\lvert\vec{H}^k\rv^{q}\right]\rchi_{\Om_T}\right)^\frac{1}{q}(z).
\end{equation*}
For each $k\ge1$ and fixed $\la>1$, we shall find a suitable $\la_k$ and by an abuse of notation, we will denote $\varphi^k_{\la,h}=\varphi^k_{\la_k,h}$. 
 Using \cref{L7} along with \cref{global_weight_assume}, we have
\begin{equation}\label{6.24}
\sup_{k\ge1}\iint_{\Om_T} g_k^p\om \ dz\le C(n,p,q,[\om]_{\frac{p}{q}},\mathbf{C}_1).
\end{equation}

Let $m_0 \in \NN$ be a number to be eventually chosen, then for each $k \geq 1$, there exists an $0 \leq m_k \leq m_0$ such that 
\begin{equation}\label{6.244}
\begin{array}{rcl}
\iint_{\{z\in \Om_T\mid \la^{2^{m_k}}<g_k(z)\le \la^{2^{m_k+1}}\}}g_k^p\om \ dz
&=&\min_{0\le m\le m_0}\iint_{\{z\in \Om_T\mid \la^{2^m}<g_k(z)\le \la^{2^{m+1}}\}}g_k^p\om \ dz\vsp\\
&\le&\frac{1}{m_0}\sum_{0\le m\le m_0}\iint_{\{z\in \Om_T\mid \la^{2^m}<g_k(z)\le \la^{2^{m+1}}\}}g_k^p\om\ dz\vsp\\
&\le& \frac{1}{m_0}\iint_{\Om_T}g_k^p\om\ dz\vsp\\
&\overset{\cref{6.24}}{\le}& \frac{C(n,p,q,[w]_{\frac{p}{q}},\mathbf{C}_1)}{m_0}.
\end{array}
\end{equation}

Let us    now define
\begin{equation*}
    \la_k := \la^{2^{m_k}} \txt{and} E_{\la}^k:=E^k_{\la_k}:=\{z\in\mathbb{R}^{n+1}\mid g_k(z)\le \la_k\}\subseteq\{z\in\mathbb{R}^{n+1}\mid\varphi_{\la_k,h}^k(z)=w_h^k(z)\},
\end{equation*}
then we have
\begin{equation}\label{6.26}
\begin{array}{rcl}
\la_k^p\om(\Om_T\setminus E_{\la}^k)
&\le& \int_{\{z\in\Om_T\mid \la_k< g_k(z)\le \la_k^2\}}g_k^p\om \ dz+\int_{\{z\in\Om_T\mid \la_k^2<g_k(z)\}}\la_k^p\om\ dz\vsp\\
&\le& \int_{\{z\in\Om_T\mid \la_k< g_k(z)\le \la_k^2\}}g_k^p\om \ dz+\frac{1}{\la_k^p}\int_{\{z\in\Om_T\mid \la_k^2<g_k(z)\}}g_k^p\om\ dz\vsp\\
&\overset{\cref{6.244}}{\apprle}&\frac{1}{m_0}+\frac{1}{\la_k^p}.\\
\end{array}
\end{equation}
\end{proof}
Let us now make the choice
\[
    m_0 = \la^p,
\]
then we have \begin{equation}\label{6.28} \la \leq \la_k := \la^{2^{m_k}} \leq \la^{2^{m_0}} =: \la^{2^{\la^p}}, \end{equation}which combined with \cref{6.26} gives 
\begin{equation}\label{imp_bnd}
    \la_k^p \om(\Om_T \setminus \elam^k) \apprle \frac{1}{\la^p}.
\end{equation}
We are now ready to prove each of the conclusions of the theorem:
\begin{description}
    \item[Proof of \cref{conc1}:] This follows from \cref{lemma3.15} since the Lipschitz extension is given by $\varphi_{\la_k,h}^k$. It is important to note that the bounds are independent of $k \in \NN$ because of \cref{global_assume} and \cref{global_weight_assume}.
    \item[Proof of \cref{conc2}:] This follows from \cref{L7} and making use of \cref{global_assume}.
    \item[Proof of \cref{conc3}:]  This follows from \cref{L6_L8} along with the bound from \cref{imp_bnd}.
    \item[Proof of \cref{conc4}:] Since $\{z\in\Om_T\mid\varphi_{\la,h}^k(z)\ne w_h^k(z)\} \subseteq \Om_T \setminus \elam^k$, we can directly use \cref{imp_bnd} and \cref{6.28} to prove this estimate.
    \item[Proof of \cref{conc5}:] Since $\{z\in\Om_T\mid\varphi_{\la,h}^k(z)\ne w_h^k(z)\} \subseteq \Om_T \setminus \elam^k$, we can apply the weak type $1-1$ estimate from \cref{max_bound} to get
    \[\begin{array}{rcl}
        \abs{\{z\in\Om_T\mid\varphi_{\la,h}^k(z)\ne w_h^k(z)\}} & \leq &  \abs{ \{g_k(z) > \la_k\}}  \\
        & \leq &  \frac{1}{\la_k^q} \iint_{\RR^{n+1}}\left[\lvert  \nabla  w^k\rv^{q}+\lvert\na\vec{F}^k\rv^{q}+\lvert\vec{H}^k\rv^{q}\right]\rchi_{\Om_T}  \ dz\\
        & \overset{\text{\cref{global_assume} and \cref{6.28}}}{\leq} & \frac{C(n,q,\mathbf{C}_1) }{\la^q}.
        \end{array}
    \]

\end{description}

\section{Proof of \texorpdfstring{\cref{main1}}.}
\label{section7}

In this section, let us take $\be = \be_0$ where $\be_0$ is from \cref{def_be_0}.  Subsequently, we shall apply the results from \cref{section6} with 
\begin{equation*}
    \begin{cases}
        q = {p-1} & \txt{if} 2 \leq p, \\
        q = 1 & \txt{if} 1<p<2.
    \end{cases}
\end{equation*}
 Let us define  \begin{equation*} \phi := (1 + |\vec{f}| + |\vec{h}|)\lsb{\chi}{\Om_T}.\end{equation*}
Let us apply \cref{weighted_estimate} with the above choice of $\phi$. Thus, we have
\begin{equation}\label{eq7.1}
     \iint_{\Om_T} |\nabla u|^p \mm(|\phi|\lsb{\chi}{\Om_T})^{-\be}(z) \ dz \apprle_{(n,p,\lamot,\Om,\om(\Om_T))} \lbr \iint_{\Om_T} \lbr |\vec{f}|^p + |\vec{h}|^p +1\rbr\mm(|\phi|\lsb{\chi}{\Om_T})^{-\be}(z) dz \rbr^{d}.
 \end{equation}
 From the properties of the Hardy-Littlewood Maximal function, we also have
 \begin{equation}\label{eq7.2}
     \lbr |\vec{f}|^p + |\vec{h}|^p +1\rbr\mm(|\phi|\lsb{\chi}{\Om_T})^{-\be}(z)  \apprle (1 + |\vec{f}| + |\vec{h}|)^{p-\be} \lsb{\chi}{\Om_T}.  
 \end{equation}
This implies that the assumption $\vec{f} \in L^{p-\be}(\Om_T)$ gives $\vec{f} \in L^p_{\om}(\Om_T)$ where we have set 
\begin{equation}\label{def_weight}
    \om(z) := \mm(|\phi|\lsb{\chi}{\Om_T})^{-\be}(z).
\end{equation}

\begin{claim}
\label{claim_weight}
    For $\be_4 := \min \left\{ \frac{1}{p-1}, p-1\right\}$, the following holds for all $\de \in (0,\be_4)$:
\[
    \begin{cases}
        \mm(|\phi|\lsb{\chi}{\Om_T})^{-\de}(z) \in A_{\frac{p}{p-1}} &\txt{if} p \geq 2, \\
        \mm(|\phi|\lsb{\chi}{\Om_T})^{-\de}(z) \in A_{p} &\txt{if} 1<p<2.
    \end{cases}\]
\end{claim}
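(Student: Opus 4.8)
The plan is to realise the weight $\mm(|\phi|\lsb{\chi}{\Om_T})^{-\de}$ in the form $v^{1-r}$ for a suitable $A_1$ weight $v$ and a suitable exponent $r>1$, exactly in the spirit of the computations carried out in \cref{claim1,claim2,claim3}, and then to observe that the threshold $\be_4=\min\{\tfrac{1}{p-1},\,p-1\}$ is precisely what forces the auxiliary exponent of $v$ to lie in $(0,1)$, so that \cref{lem_weight_one} is applicable.

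Before invoking the weight lemmas I would first check that $\mm(|\phi|\lsb{\chi}{\Om_T})(z)<\infty$ for a.e.\ $z$, which is exactly the hypothesis required for \cref{lem_weight_one}. Indeed $\phi=(1+|\vec f|+|\vec h|)\lsb{\chi}{\Om_T}$ belongs to $L^1(\RR^{n+1})$, since $\Om_T$ has finite measure and, as $p-\be_0\ge 1$, we have $\vec f,\vec h\in L^{p-\be_0}(\Om_T)\subset L^1(\Om_T)$. Hence the weak type $(1,1)$ bound of \cref{max_bound} gives $\mm(|\phi|\lsb{\chi}{\Om_T})<\infty$ almost everywhere.

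For $p\ge 2$ I would take $r:=\tfrac{p}{p-1}\in(1,2]$ and $\al:=\de(p-1)$. Since $0<\de<\be_4\le\tfrac{1}{p-1}$ we get $\al\in(0,1)$, so \cref{lem_weight_one} yields $\mm(|\phi|\lsb{\chi}{\Om_T})^{\al}\in A_1$. A direct computation gives $\al(1-r)=-\de$, hence
\[
\mm(|\phi|\lsb{\chi}{\Om_T})^{-\de}=\Bigl(\mm(|\phi|\lsb{\chi}{\Om_T})^{\al}\Bigr)^{1-r},
\]
and applying \cref{lem_weight_two} with the exponent $r=\tfrac{p}{p-1}$ in place of $p$ gives $\mm(|\phi|\lsb{\chi}{\Om_T})^{-\de}\in A_{\frac{p}{p-1}}$. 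For $1<p<2$ the argument is identical with $r:=p$ and $\al:=\tfrac{\de}{p-1}$: now $0<\de<\be_4\le p-1$ forces $\al\in(0,1)$, one has $\al(1-p)=-\de$, and \cref{lem_weight_two} gives $\mm(|\phi|\lsb{\chi}{\Om_T})^{-\de}\in A_p$.

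There is no genuine obstacle here: the entire content is the bookkeeping of the exponents, and the only point requiring care is to verify that $\be_4$ is the correct cutoff, i.e.\ that for $\de<\be_4$ the auxiliary exponent $\al$ (which equals $\de(p-1)$ when $p\ge2$ and $\tfrac{\de}{p-1}$ when $1<p<2$) satisfies $\al<1$. If in addition one wants the $A_q$-constant of the resulting weight to be controlled by a constant depending only on $n$ and $p$ — which is convenient when combining this claim with \cref{lip_seq} — it suffices to shrink the admissible range to $\de\le\be_4/2$, so that $\al\le\tfrac12$ and \cref{lem_weight_one,lem_weight_two} give $[\mm(|\phi|\lsb{\chi}{\Om_T})^{-\de}]_{A_q}\le(2C(n))^{q-1}=C(n,p)$.
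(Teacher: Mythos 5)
Your proposal is correct and follows essentially the same route as the paper: in both arguments one writes $\mm(|\phi|\lsb{\chi}{\Om_T})^{-\de}$ as $v^{1-r}$ with $v=\mm(|\phi|\lsb{\chi}{\Om_T})^{\al}$, takes $r=\tfrac{p}{p-1}$, $\al=\de(p-1)$ when $p\ge 2$ (and $r=p$, $\al=\tfrac{\de}{p-1}$ when $1<p<2$), and the condition $\de<\be_4$ is exactly what guarantees $\al\in(0,1)$ so that \cref{lem_weight_one,lem_weight_two} apply. Your additional remarks on verifying $\mm(|\phi|\lsb{\chi}{\Om_T})<\infty$ a.e.\ and on shrinking to $\de\le\be_4/2$ to obtain a dimensional bound on the $A_q$ constant are consistent with \cref{3obs_rmk} and do not change the argument.
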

\begin{proof}
    Let us first consider the case $p \geq 2$. From \cref{3obs_rmk}, we see that 
    \[
      \mm(|\phi|\lsb{\chi}{\Om_T})^{-\de}(z) \in A_{\frac{p}{p-1}},
    \]
    provided there exists $\al \in (0,1)$ such that the following holds
    \[
      \mm(|\phi|\lsb{\chi}{\Om_T})^{-\de}(z) = \mm(|\phi|\lsb{\chi}{\Om_T})^{\al \lbr 1-\frac{p}{p-1}\rbr}(z).
    \]
    In particular, we would need $\al = \de (p-1) < 1$, and this is possible if we choose $\de < \frac{1}{p-1}$. 
    
    In the case $1<p<2$, we proceed analogously to require would need $\al = \frac{\de}{p-1} < 1$ which again holds provided $\de < p-1$.  
\end{proof}

 A simple application of H\"older's inequality gives 
 \begin{equation*}
     \begin{array}{rcl}
      \iint_{\Om_T} |\nabla u|^{p-\beta} \ dz & = &    \iint_{\Om_T} |\nabla u|^{p-\beta} \mm(|\phi|\lsb{\chi}{\Om_T})^{\frac{\be(p-\beta)}{p}}(z) \mm(|\phi|\lsb{\chi}{\Om_T})^{\frac{(\beta-p)\be}{p}}(z)\ dz \\
      & \apprle & \iint_{\Om_T} |\nabla u|^p \mm(|\phi|\lsb{\chi}{\Om_T})^{-\be}(z) \ dz + \iint_{\Om_T} \mm(|\phi|\lsb{\chi}{\Om_T})^{p-\be}(z) \ dz \\
      & \apprle & \iint_{\Om_T} |\nabla u|^p \mm(|\phi|\lsb{\chi}{\Om_T})^{-\be}(z) \ dz + \iint_{\Om_T} (1 + |\vec{f}| + |\vec{h}|)^{p-\beta} \ dz \\
      & \overset{\text{\cref{eq7.1} and \cref{eq7.2}}}{<} & \infty.
     \end{array}
 \end{equation*}

 Since we are given $\vec{f} \in L^{p-\be}(\Om_T)$ and $\vec{h} \in L^{p-\be}(\Om_T)$, we shall consider the following approximation sequence for any $k \in (0,\infty)$:
 \begin{enumerate}[(i)]
     \item Define $\vec{f}_k:= T_k(\vec{f})$ where $T_k(s) =s$ if $|s| \leq k$ and $T_k(s) = k$ if $|s| \geq k$. Note that we have used the notation $T_k(\vec{f})$ to denote $T_k$ acting on each component of $\vec{f}$ separately.
     \item Define $\vec{h}_k=\nabla h_k$ where $h_k\in C^\infty(\Om_T)$ is from \cref{h_hyp_3} of \cref{definition_assumption}.
 \end{enumerate}
 From \cref{eq7.2} and the above construction, we see that for every $k \geq 1$, we have $\vec{f}_k\in L^\infty(\Om_T)$, $\vec{h}_k\in C^\infty(\Om_T)$ and 
\begin{equation}\label{strong_conv}
\begin{array}{ll}
\vec{f}_k\to\vec{f}&\text{ in } L^{p-\beta}(\Om_T)\cap L^p_\om(\Om_T),\\
\vec{h}_k\to\vec{h}&\text{ in } L^{p-\beta}(\Om_T)\cap L^p_\om(\Om_T).
\end{array}
\end{equation}

As a consequence, for each $k \in \NN$, there exists a unique weak solution $u^k\in C(0,T;L^{2}(\Om))\cap L^p(0,T;W_0^{1,p}(\Om))$ of 
\begin{equation}\label{k_sol}
\begin{cases}
u^k_t-\dv\aa(z,\na u^k+\vec{h}_k)=\dv|\vec{f}_k|^{p-2}\vec{f}_k&\text{ in }\Om_T,\\
u^k=0&\text{ on }\pa_p\Om_T.
\end{cases}
\end{equation}
We have the following observations:
\begin{itemize}
    \item From \cref{unweighted_estimate} applied to \cref{k_sol}, we see that 
    \begin{equation}\label{obs1}
        \|\nabla u^k\|_{L^{p-\be}(\Om_T)} \apprle \| \vec{h}_k + \vec{f}_k\|_{L^{p-\be}(\Om_T)} \overset{\cref{strong_conv}}{\apprle} \| \vec{h} + \vec{f}\|_{L^{p-\be}(\Om_T)} < \infty.
    \end{equation}
    \item From \cref{weighted_estimate} applied to \cref{k_sol} with weight defined as in \cref{def_weight}, we have
     \begin{equation*}
     \begin{array}{rcl}
     \iint_{\Om_T} |\nabla u^k|^p \om(z) \ dz & \apprle &  \lbr \iint_{\Om_T} \lbr |\vec{f}_k|^p + |\vec{h}_k|^p +1\rbr\om(z) dz \rbr^{d} \\
     & \overset{\cref{eq7.2}}{\apprle} &\lbr \iint_{\Om_T} \lbr |\vec{f}_k|^{p-\be} + |\vec{h}_k|^{p-\be} +1\rbr  dz \rbr^{d} \overset{\cref{strong_conv}}{<} \infty.
     \end{array}
    \end{equation*}
    \item Furthermore, applying the above two observations to \cref{k_sol}, we see that
    \begin{equation*}
        \|u^k_t\|_{L^\frac{p-\beta}{p-1}(0,T;W^{-1,\frac{p-\beta}{p-1}}(\Om))}  \apprle  \|\nabla u^k\|_{L^{p-\be}(\Om_T)} + \|\vec{f}_k\|_{L^{p-\be}(\Om_T)} + \|\vec{h}_k\|_{L^{p-\be}(\Om_T)}
         \overset{\cref{obs1}}{<}  \infty.
    \end{equation*}
\end{itemize}
As a consequence of the above observations, we have the following convergences (upto relabelling a suitable subsequence):
\begin{equation}\label{convergence}
\begin{array}{rcll}
u^k&\to& u&\text{ strongly in }L^{p-\beta}(\Om_T)\cap L^2(\Om_T),\\
\nabla u^k&\rightharpoonup& \nabla u &\text{ weakly in }L^{p-\beta}(\Om_T)\cap L^p_\om(\Om_T),\\
\aa^k:=\aa(z,\nabla u^k+\vec{h}_k)&\rightharpoonup& \bar{\aa}&\text{ weakly in }L^\frac{p-\beta}{p-1}(\Om_T)\cap L^\frac{p}{p-1}_\om(\Om_T).
\end{array}
\end{equation}
To get the above convergence results, we made use of \cref{Aubin_Lions} along with the restriction \cref{restriction_p} which ensures that the Sobolev exponent $p^*:= \frac{np}{n-p} > 2$.

From \cref{convergence}, we see that the function $u \in L^{p-\be}(0,T;W_0^{1,p-\be}(\Om))$ is a distributional solution of 
\[
\begin{cases}
u_t-\dv\bar{\aa}=\dv|\vec{f}|^{p-2}\vec{f}&\text{ in }\Om_T,\\
u=0&\text{ on }\pa_t\Om_T,
\end{cases}
\]
where the operator $\bar{\aa}$ is a formal limit as obtained in \cref{convergence}.

 Denote $v^k:=u^k-u$, then $v^k$ is a  weak solution of 
\begin{equation}\label{diff_eq}
\begin{cases}
v^k_t-\dv\aa(z,\na v^k+\vec{h}_k-\na u)=\dv\bar{\aa}+\dv(\lv \vf_k\rv^{p-2}\vf_k-\lv\vf\rv^{p-2}\vf)&\text{ in }\Om_T,\\
v^k=0&\text{ in }\pa_p\Om_T.
\end{cases}
\end{equation}

From \cref{convergence}, we see that all the hypothesis of \cref{lip_seq} are satisfied for the weak solutions of \cref{diff_eq}. Hence for each fixed $\la>1$, there exists a a family of functions $\{ v_{\la,h}^k\}_{k\geq 1}$ satisfying the conclusions of \cref{lip_seq}.

\begin{claim}
\label{claim7.2}
    Suppose \cref{claim7.1} holds, then the following holds:
    \begin{equation}
        \label{minty}
        \bar{\aa} = \aa(z,\nabla u + \vec{h}).
    \end{equation}

\end{claim}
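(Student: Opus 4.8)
The plan is to convert the integral bound of \cref{claim7.1} on the nonnegative quantity
\[
  \mathcal{E}_k(z):=\iprod{\aa(z,\nabla u^k+\vec{h}_k)-\aa(z,\nabla u+\vec{h})}{\nabla u^k-\nabla u}
\]
(nonnegative by the monotonicity in \eqref{abounded}) into a.e.\ convergence, then to run a pointwise monotonicity argument yielding $\nabla u^k\to\nabla u$ a.e., and finally to match this a.e.\ limit with the weak limit recorded in \eqref{convergence}. Concretely: from \cref{claim7.1} — stated either as $\mathcal{E}_k\to 0$ in measure, or, after invoking Chacon's biting lemma \cref{l1_compact}, as $\mathcal{E}_k\to 0$ in $L^1(F_j)$ on a non-decreasing family $F_j\subset\Om_T$ with $\abs{\Om_T\setminus F_j}\to 0$ — a diagonal subsequence (not relabelled) gives $\mathcal{E}_k(z)\to 0$ for a.e.\ $z\in\Om_T$; passing to a further subsequence, \eqref{strong_conv} also gives $\vec{h}_k(z)\to\vec{h}(z)$ a.e. I then fix a set of full measure on which simultaneously $\mathcal{E}_k(z)\to 0$, $\vec{h}_k(z)\to\vec{h}(z)$, and $\abs{\nabla u(z)}+\abs{\vec{h}(z)}<\infty$.

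For $z$ in that set write $\xi_k:=\nabla u^k(z)+\vec{h}_k(z)$ and $\xi:=\nabla u(z)+\vec{h}(z)$, so that
\[
  \iprod{\aa(z,\xi_k)-\aa(z,\xi)}{\xi_k-\xi}=\mathcal{E}_k(z)+\iprod{\aa(z,\xi_k)-\aa(z,\xi)}{\vec{h}_k(z)-\vec{h}(z)},
\]
while \eqref{abounded} bounds the left side below by $\La_0(\abs{\xi_k}^2+\abs{\xi}^2)^{\frac{p-2}{2}}\abs{\xi_k-\xi}^2$ and bounds $\abs{\aa(z,\xi_k)-\aa(z,\xi)}$ above by $\La_1\abs{\xi_k-\xi}(\abs{\xi_k}^2+\abs{\xi}^2)^{\frac{p-2}{2}}$. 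One argues by contradiction: if $\xi_k\not\to\xi$, pass to a subsequence with $\abs{\xi_k-\xi}\ge\de>0$. When $\abs{\xi_k}$ stays bounded along it, $(\abs{\xi_k}^2+\abs{\xi}^2)^{\frac{p-2}{2}}\abs{\xi_k-\xi}^2$ is bounded below by a positive constant, while $\mathcal{E}_k(z)\to 0$ and the correction term tends to $0$ (growth bound together with $\abs{\vec{h}_k(z)-\vec{h}(z)}\to 0$), a contradiction. When $\abs{\xi_k}\to\infty$, then $\abs{\xi_k-\xi}\approx\abs{\xi_k}$, so the coercive term is $\ge c\abs{\xi_k}^p$ whereas the correction term is $\apprle\abs{\xi_k}^{p-1}$ for large $k$, giving $\mathcal{E}_k(z)\to+\infty$, again a contradiction. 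Hence $\xi_k\to\xi$, and since $\vec{h}_k(z)\to\vec{h}(z)$ we conclude $\nabla u^k(z)\to\nabla u(z)$; as $z$ ranged over a set of full measure, $\nabla u^k\to\nabla u$ a.e.\ in $\Om_T$ along the subsequence.

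By the Carath\'eodory property of $\aa$ (continuity in the gradient slot, see \cref{nonlinear_structure}) together with the a.e.\ convergences $\nabla u^k(z)\to\nabla u(z)$ and $\vec{h}_k(z)\to\vec{h}(z)$, one gets $\aa^k=\aa(z,\nabla u^k+\vec{h}_k)\to\aa(z,\nabla u+\vec{h})$ a.e.\ in $\Om_T$. Since $\{\aa^k\}$ is bounded in $L^{\frac{p-\be}{p-1}}(\Om_T)$ — from $\abs{\aa(z,\zeta)}\le\La_1\abs{\zeta}^{p-1}$ and \eqref{convergence} — with $\frac{p-\be}{p-1}>1$, an Egorov argument combined with uniqueness of weak limits forces the a.e.\ limit to coincide with the weak $L^{\frac{p-\be}{p-1}}$ limit, which by \eqref{convergence} is $\bar{\aa}$; hence $\bar{\aa}=\aa(z,\nabla u+\vec{h})$, i.e.\ \eqref{minty}. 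The main obstacle is the pointwise monotonicity step: it has to be run separately in the degenerate range $p\ge 2$ and the singular range $\frac{2n}{n+2}<p<2$, and one must check that the correction term coming from $\vec{h}_k\neq\vec{h}$ is genuinely negligible — which is exactly where the \emph{strong} (not merely weak) convergence of $\vec{h}_k$ is used. The biting-lemma diagonalisation in the first step and the Egorov argument in the last are routine.
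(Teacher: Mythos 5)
Your route — extracting a.e.\ convergence of the monotonicity quantity, deducing $\nabla u^k\to\nabla u$ a.e.\ by the pointwise coercivity of $\aa$, and then identifying the weak limit $\bar{\aa}$ via Carath\'eodory continuity — is a genuinely different strategy from the paper's. The paper runs a Minty--Browder trick entirely at the level of integrals: it shows $\lim_k\iint_{F_j}\iprod{\aa^k}{\na u^k}\om\,dz=\iint_{F_j}\iprod{\bar{\aa}}{\na u}\om\,dz$, deduces $0\le\iint_{F_j}\iprod{\bar{\aa}-\aa(z,\Theta+\vec{h})}{\na u-\Theta}\om\,dz$ for all $\Theta\in L^p_\om$, and then perturbs $\Theta=\na u+\ve\cdot(\text{bounded direction})$ and lets $\ve\to0^+$. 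The Minty trick never needs (nor produces) a.e.\ convergence of $\nabla u^k$. Your approach, if it closes, gives the strictly stronger information that $\nabla u^k\to\nabla u$ a.e.

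However, as written there is a genuine gap at the entry point. \cref{claim7.1} asserts that $\iprod{\aa^k}{\na u^k-\na u}\om\rightharpoonup 0$ \emph{weakly} in $L^1(F_j)$, not convergence in measure or in $L^1$-norm, and the integrand there is $\iprod{\aa^k}{\na u^k-\na u}\om$, not your $\mathcal{E}_k$. Weak $L^1$ convergence to $0$ of a quantity without a sign does not permit extracting an a.e.\ convergent subsequence, and $\iprod{\aa^k}{\na u^k-\na u}\om$ has no sign. Your $\mathcal{E}_k$ also has no sign: because the pairing is with $\nabla u^k-\nabla u$ rather than with $(\nabla u^k+\vec{h}_k)-(\nabla u+\vec{h})$, the monotonicity inequality in \eqref{abounded} does not apply directly, as your own decomposition with the correction term $\iprod{\aa(z,\xi_k)-\aa(z,\xi)}{\vec{h}_k-\vec{h}}$ shows. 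So the opening remark that $\mathcal{E}_k\ge0$ is false, and the claimed passage from \cref{claim7.1} to a.e.\ convergence of $\mathcal{E}_k$ is unjustified.

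To salvage the route you would need an intermediate step: pass from \cref{claim7.1} to norm $L^1(F_j)$ convergence of the genuinely nonnegative quantity
\[
M_k:=\iprod{\aa(z,\nabla u^k+\vec{h}_k)-\aa(z,\nabla u+\vec{h}_k)}{\nabla u^k-\nabla u}\,\om\;\ge 0.
\]
Expanding, $\iint_{F_j}M_k\,dz=\iint_{F_j}\iprod{\aa^k}{\na u^k-\na u}\om\,dz-\iint_{F_j}\iprod{\aa(z,\nabla u+\vec{h}_k)}{\na u^k-\na u}\om\,dz$; the first term $\to0$ by \cref{claim7.1}, and the second tends to $0$ because $\na u^k-\na u\rightharpoonup 0$ in $L^p_\om$ while $\aa(z,\nabla u+\vec{h}_k)\to\aa(z,\nabla u+\vec{h})$ strongly in $L^{p/(p-1)}_\om$ (by \eqref{strong_conv}, Carath\'eodory continuity, and dominated convergence with the growth bound). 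Since $M_k\ge0$, weak $L^1$ convergence to $0$ becomes norm convergence, a subsequence converges a.e., and $\om>0$ a.e.\ lets you drop the weight. From there your pointwise monotonicity argument (which is correct in both the degenerate and singular ranges) and the final Egorov/weak-limit identification go through. The paper's Minty trick avoids this entire pointwise detour, at the modest cost of not producing a.e.\ gradient convergence.
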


\begin{proof}
    From \cref{convergence}, \cref{diff_eq} and \cref{lip_seq}, we see that $\{\iprod{\aa^k}{\na u^k-\na u}\om\}_{k\ge1}$, $\{\lv\na u^k-\na u\rv^p\om\}_{k\ge1}$, $\{ |\nabla v^k| \om \}_{k \geq 1}$ and $\{ |\nabla v_{\la,h}^k| \om \}_{k \geq 1}$ are bounded sequences in $L^1(\Om_T)$. As a consequence, we can apply \cref{l1_compact} to get a sequence of measurable sets $\{F_j\}_{\{j\ge1\}}$ such that $F_j\subset\Om_T$, $\lv\Om_T\setminus F_j\rv\to0$ as $j\to\infty$ and having the property that $\{\iprod{\aa^k}{\na u^k-\na u}\om\}_{k\ge1}$ and $\{\lv\na u^k-\na u\rv^p\om\}_{k\ge1}$ are precompact in $L^1(F_j)$ for all $j\ge1$.
    
    Therefore taking subsequence if necessary, for any $\ve>0$ and $j\ge1$, there exists a $\delta(\ep,j)>0$ such that for any $\mathcal{O}\subset F_j$, there holds
    \begin{equation*}
\lv\mathcal{O}\rv<\delta\qquad \Rightarrow \qquad\sup_{k\ge1}\iint_{\mathcal{O}}\lv \na u^k-\na u\rv^p\om\ dz<\ep.
\end{equation*}

From \cref{claim7.1}, we see that the following holds with $F = F_j$ for any $j \in \NN$:
\[
\lim_{k\to\infty}\iint_{F_j}\iprod{\aa^k}{\na u^k}\om\ dz=\lim_{k\to\infty}\iint_{F_j}\iprod{\aa^k}{\na u^k-\na u}\om\ dz +\lim_{k\to\infty}\iint_{F_j}\iprod{\aa^k}{\na u}\om\ dz=\lim_{k\to\infty}\iint_{F_j}\iprod{\bar{\aa}}{\na u}\om\ dz.
\]

Therefore for any $\Theta\in L^{p}_\om(\Om_T)$, the following sequence of estimates hold:
\begin{equation}\label{725}
\begin{array}{rcl}
0
&\overset{\eqref{abounded}}{\le}& \lim_{k\to\infty}\iint_{F_j}\iprod{\aa^k-\aa(z,\Th+\vec{h})}{\na u^k-\Th+\vec{h}_k-\vec{h}}\om\ dz\vsp\\
&=&\lim_{k\to\infty}\iint_{F_j}\iprod{\aa^k-\aa(z,\Th+\vec{h})}{\na u^k-\Th}\ dz+\lim_{k\to\infty}\iint_{F_j}\iprod{\aa^k-\aa(z,\Th+\vec{h})}{\vec{h}^k-\vec{h}}\om\ dz\vsp\\
&\overset{\cref{convergence}}{=}&\lim_{k\to\infty}\iint_{F_j}\iprod{\aa^k-\aa(z,\Th+\vec{h})}{\na u^k-\Th}\om\ dz\vsp\\
&\overset{\cref{convergence}}{=}&\iint_{F_j}\iprod{\bar{\aa}-\aa(z,\Th+\vec{h})}{\na u-\Th}\om\ dz.
\end{array}
\end{equation}

Let us take  $\Th = \na u+\ve\left(\frac{\bar{\aa}-\aa(z,\na u+\vec{h})}{1+\lv\bar{\aa}-\aa(z,\na u+\vec{h})\rv}\right)$ where second term belong to $L^\infty(\Om_T)$, we get
\[
\begin{array}{rcl}
0&\overset{\cref{725}}{\le}& \lim_{\ve\to0^+}\int_{F_j}\iprod{\bar{\aa}-\aa\left(z,\na u+\vec{h}+\ve\left(\frac{\bar{\aa}-\aa(z,\na u+\vec{h})}{1+\lv\bar{\aa}-\aa(z,\na u+\vec{h})\rv}\right)\right)}{-\left(\frac{\bar{\aa}-\aa(z,\na u+\vec{h})}{1+\lv\bar{\aa}-\aa(z,\na u+\vec{h})\rv}\right)}\om\ dz\vsp\\
&=&\int_{F_j}\frac{-\left\lv\bar{\aa}-\aa(z,\na u+\vec{h})\right\rv^2}{1+\lv\bar{\aa}-\aa(z,\na u+\vec{h})\rv} \om\ dz.
\end{array}
\]
Since $\om\ge0$, we now let  $j\to\infty$ to obtain we obtain \eqref{minty}.
\end{proof}

We are now left to prove the following claim:
\begin{claim}
    \label{claim7.1}
    Let $j \in \NN$ and $F_j$ be a measurable set obtained in \cref{claim7.2}. We then claim that the following limit holds:
    \begin{equation*}
\iprod{\aa(z,\nabla u^k+\vec{h}_k)}{\na u^k-\na u}\om\rightharpoonup  0\text{ weakly in }L^1(F_j).
\end{equation*}

\end{claim}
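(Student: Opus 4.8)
The plan is to prove \cref{claim7.1} by the Minty-type scheme built on the weighted parabolic Lipschitz truncation of \cref{lip_seq}; throughout, write $\aa^k:=\aa(z,\nabla u^k+\vec{h}_k)$.

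\textbf{Step 1 (monotonicity reduction).} The first inequality in \cref{abounded}, applied with $\zeta=\nabla u^k+\vec{h}_k$ and $\eta=\nabla u+\vec{h}_k$, gives that $\mu^k:=\iprod{\aa^k-\aa(z,\nabla u+\vec{h}_k)}{\nabla u^k-\nabla u}\,\om$ is nonnegative. Since $\iprod{\aa^k}{\nabla u^k-\nabla u}\om=\mu^k+\iprod{\aa(z,\nabla u+\vec{h}_k)}{\nabla u^k-\nabla u}\om$, and since $\aa(z,\nabla u+\vec{h}_k)\to\aa(z,\nabla u+\vec{h})$ strongly in $L^{p/(p-1)}_\om(\Om_T)$ (by \cref{strong_conv}, the bound $|\aa(z,\zeta)|\le\La_1|\zeta|^{p-1}$, and dominated convergence) while $\nabla u^k-\nabla u\rightharpoonup0$ weakly in $L^p_\om(\Om_T)$ by \cref{convergence}, the second summand tends to $0$ weakly in $L^1(\Om_T)$, hence in $L^1(F_j)$. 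Because $\{\iprod{\aa^k}{\nabla u^k-\nabla u}\om\}_{k\ge1}$ is pre-compact, hence equi-integrable, in $L^1(F_j)$ by the construction of $F_j$ in the proof of \cref{claim7.2}, the same holds for $\{\mu^k\}$; since $\mu^k\ge0$, the claim is equivalent to $\iint_{F_j}\mu^k\,dz\to0$.

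\textbf{Step 2 (truncation; bad set).} Fix $\la>1$ and apply \cref{lip_seq} to the weak solutions $v^k=u^k-u$ of \cref{diff_eq} (the extra term $\dv\bar{\aa}$ only affects constants, since $\bar{\aa}\in L^{p/(p-1)}_\om(\Om_T)$ by \cref{convergence}), obtaining the truncations $v^k_{\la,h}$, levels $\la_k\ge\la$, good sets $E^k_\la\subseteq\{v^k_{\la,h}=v^k_h\}$, and the bounds \cref{conc1,conc2,conc3,conc4,conc5} together with \cref{imp_bnd}. Split $\iint_{F_j}\mu^k=\iint_{F_j\cap E^k_\la}\mu^k+\iint_{F_j\setminus E^k_\la}\mu^k$. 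On the bad set $|F_j\setminus E^k_\la|\le|\Om_T\setminus E^k_\la|\apprle\la^{-q}$ uniformly in $k$ by \cref{conc5}, so given $\ve>0$, equi-integrability of $\{\mu^k\}$ on $F_j$ makes $\iint_{F_j\setminus E^k_\la}\mu^k<\ve$ for all $k$ once $\la$ is large in terms of $\ve$.

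\textbf{Step 3 (good set).} On $E^k_\la$ one has $\nabla v^k_{\la,h}=\nabla v^k_h$, and Steklov averages converge, so $\iint_{F_j\cap E^k_\la}\mu^k=\lim_{h\to0}\iint_{F_j\cap E^k_\la}\iprod{\aa^k-\aa(z,\nabla u+\vec{h}_k)}{\nabla v^k_{\la,h}}\om$. For each fixed $h$, enlarging the integration set to $F_j$ costs a term over $F_j\setminus E^k_\la$ that is $\apprle\la^{-1}$, using the Lipschitz bound $|\nabla v^k_{\la,h}|\apprle\la$ from \cref{conc1}, the weighted boundedness of $\aa^k-\aa(z,\nabla u+\vec{h}_k)$ in $L^{p/(p-1)}_\om$, and $\om(\Om_T\setminus E^k_\la)\apprle\la^{-2p}$ from \cref{imp_bnd}. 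After replacing $\aa(z,\nabla u+\vec{h}_k)$ by $\aa(z,\nabla u+\vec{h})$ (strong $L^{p/(p-1)}_\om$-convergence against the $L^p_\om$-bounded $\nabla v^k_{\la,h}$) and splitting off the residual $\iint_{F_j}\iprod{\bar{\aa}-\aa(z,\nabla u+\vec{h})}{\nabla v^k_{\la,h}}\om$, which tends to $0$ as $k\to\infty$ because $v^k_{\la,h}\to0$ in $L^p_\om(\Om_T)$ (a consequence of \cref{convergence} via a weighted Aubin--Lions embedding, and of the weighted smallness \cref{conc4} of $\{v^k_{\la,h}\ne v^k_h\}$, so that $\nabla v^k_{\la,h}\rightharpoonup0$ weakly in $L^p_\om(\Om_T)$ against the fixed $\bar{\aa}-\aa(z,\nabla u+\vec{h})\in L^{p/(p-1)}_\om(\Om_T)$), one is reduced to estimating $\iint_{F_j}\iprod{\aa^k-\bar{\aa}}{\nabla v^k_{\la,h}}\om$. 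This last quantity is the genuine crux: it is controlled by testing the weak formulation of \cref{diff_eq} with $v^k_{\la,h}$, the weighted smallness being extracted through the weighted conclusions \cref{conc2,conc3} of the truncation, with the parabolic contribution absorbed by \cref{conc3} (equivalently \cref{L6_L8}) by a multiple of $\la^{-(p-1)}$ and the $|\vec{f}_k|^{p-2}\vec{f}_k-|\vec{f}|^{p-2}\vec{f}$ contribution eliminated by \cref{strong_conv}; letting $h\to0$ and then $k\to\infty$ gives $\limsup_k\big|\iint_{F_j}\iprod{\aa^k-\bar{\aa}}{\nabla v^k_{\la,h}}\om\big|\apprle\la^{-(p-1)}$. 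Collecting everything, $\limsup_{k\to\infty}\iint_{F_j}\mu^k\apprle\la^{-(p-1)}+C\ve$; letting $\la\to\infty$ and then $\ve\to0^+$ yields $\iint_{F_j}\mu^k\to0$, which proves the claim.

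\textbf{Main obstacle.} I expect the hard part to be Step 3, and specifically the precise device by which the Muckenhoupt weight $\om$ --- which, being merely measurable, cannot be inserted into an admissible test function --- is made to appear in the test-function estimate for $\iint_{F_j}\iprod{\aa^k-\bar{\aa}}{\nabla v^k_{\la,h}}\om$; here the parabolic contribution must be absorbed via the weighted estimate \cref{conc3}/\cref{L6_L8} rather than by a direct integration by parts, following \cite{bulicek2018well}. Keeping the limits in the correct order ($h\to0$, then $k\to\infty$, then $\la\to\infty$, with $\la_k$ chosen per $k$ exactly as in \cref{lip_seq}) is the accompanying bookkeeping, and a secondary technical point is the weighted compactness $v^k\to0$ in $L^p_\om(\Om_T)$, which needs a weighted Aubin--Lions embedding and is not directly contained in the convergences recorded in \cref{convergence}.
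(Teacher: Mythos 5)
Your overall strategy --- Lipschitz truncation of $v^k = u^k - u$, splitting into good and bad sets, and invoking \cref{conc3} for the parabolic term --- matches the paper's, and your Step~1 reduction to the nonnegative quantity $\mu^k$ via monotonicity is a legitimate sharpening of the statement. However, the gap you yourself flag under ``Main obstacle'' is exactly the crux, and you do not close it: testing \cref{diff_eq} with $\vlh^k$ produces only the \emph{unweighted} pairing, and the merely measurable weight $\om$ cannot be inserted into an admissible test function. Your phrase ``the weighted smallness being extracted through \cref{conc2,conc3}'' names the desired conclusion without supplying the mechanism, and the auxiliary argument you invoke (a weighted Aubin--Lions embedding to get $v^k\to0$ in $L^p_\om$) is neither in the paper's toolkit nor needed.

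The device the paper uses is as follows. Test \cref{diff_eq} with $\eta\,\vlh^k$ for a \emph{smooth} cut-off $\eta$ vanishing on $\pa_p\Om_T$, and split the time-derivative term by the pointwise identity
\[
\pa_t v_h^k\,(\eta\vlh^k) = \tfrac12\,\pa_t\bigl(\bigl[(v_h^k)^2-(\vlh^k-v_h^k)^2\bigr]\eta\bigr) - \tfrac12 (\pa_t\eta)\bigl[(v_h^k)^2-(\vlh^k-v_h^k)^2\bigr] + (\pa_t\vlh^k)(\vlh^k-v_h^k)\eta.
\]
The first two terms vanish as $k\to\infty$ because $\vlh^k\to0$ in $L^\infty(\Om_T)$ --- obtained not by a weighted Aubin--Lions argument but by combining $\lVert\vlh^k\rVert_{L^{p-\be}(\Om_T)}\apprle\lVert v_h^k\rVert_{L^{p-\be}(\Om_T)}\to0$ (from item \cref{item4} and \cref{convergence}) with the equi-Lipschitz bound \cref{conc1} and Arzel\`a--Ascoli. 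For smooth $\eta$ one is thus left with
\[
\lim_{k\to\infty}\iint_{\Om_T}\iprod{[\aa^k-\bar{\aa}]_h}{\nabla\vlh^k}\,\eta\ dz = \lim_{k\to\infty}\iint_{\Om_T}(\pa_t\vlh^k)(\vlh^k-v_h^k)\,\eta\ dz.
\]
Since $\iprod{[\aa^k-\bar{\aa}]_h}{\nabla\vlh^k}$ is bounded in $L^{\frac{p-\be}{p-1}}(\Om_T)$ uniformly in $k$ (by \cref{convergence} and \cref{conc1}), this identity extends to all $\eta\in L^{(\frac{p-\be}{p-1})'}(\Om_T)$. The key observation --- the device you were looking for --- is that the weight $\om=\mm(|\phi|\chi_{\Om_T})^{-\be}$ with $\phi\geq1$ is \emph{bounded}, so one may take $\eta=\om\,\chi_{F_j}\in L^\infty(\Om_T)$; only then does \cref{conc3} applied to the right-hand side yield the weighted bound $\apprle\la^{-(p-1)}$. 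Removing $\bar{\aa}$ using the weak-$*$ $L^\infty$ convergence $\nabla\vlh^k\overset{*}{\rightharpoonup}0$, passing from $\nabla\vlh^k$ to $\nabla v_h^k$ by H\"older on the bad set via \cref{conc2,conc5}, and sending $\la\to\infty$ completes the proof, as you anticipated in outline.
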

\begin{proof}

From \cref{lipschitz_extension}, \cref{item4} and  \cref{convergence}, we see that 
\[
\iint_{\Om_T}\lv \vlh^k\rv^{p-\beta}\ dz\apprle\iint_{\Om_T}\lv v_h^k\rv^{p-\beta}\ dz\to0\text{ as }k\to\infty.
\]
Therefore by \cref{conc1} and Arzel\`{a}-Ascoli Theorem, taking a subsequence if necessary, we get
\begin{equation}\label{conv1}
\vlh^k\to0\text{ in }L^\infty(\Om_T)\text{ as }k\to\infty.
\end{equation}

Also, since for any $\eta\in C_0^\infty(\Om_T)$, there holds
\[
\lim_{k\to\infty}\iint_{\Om_T}\eta\na \vlh^k\ dz=\lim_{k\to\infty}\iint_{\Om_T}-\vlh^k\na\eta\ dz=0,
\]
from which we have
\begin{equation}\label{conv2}
\na\vlh^k\overset{\ast}{\rightharpoonup}0\qquad \text{  in weak $*$ }L^\infty(\Om_T,\mathbb{R}^{n+1})\text{ as }k\to\infty.
\end{equation}

Making use of  \cref{conv1}, for any $\eta\in C^\infty(\Om_T)$ with $\eta = 0$ on $\pa_p(\Om_T)$, we have
\begin{equation}\label{seq0}
\begin{array}{rcl}
\lim_{k\to\infty}\iint_{\Om_T}\iprod{[\aa^k-\bar{\aa}]_h}{\na \vlh^k}\eta\ dz & = & \lim_{k\to\infty}\iint_{\Om_T}\iprod{[\aa^k-\bar{\aa}]_h}{\na(\eta\vlh^k)}\ dz-\lim\iint_{\Om_T}\iprod{[\aa^k-\bar{\aa}]_h}{\na\eta}\vlh^k\ dz\vsp\\
&=&\lim_{k\to\infty}\iint_{\Om_T}\iprod{[\aa^k-\bar{\aa}]_h}{\na(\eta\vlh^k)}\ dz.
\end{array}
\end{equation}

Making use of \cref{strong_conv} along with \cref{diff_eq}, \cref{conv1} and  \cref{conv2}, we see that \cref{seq0} becomes
\begin{equation*}
\begin{array}{ll}
&\hspace*{-2cm}\lim_{k\to\infty}\iint_{\Om_T}\iprod{[\aa^k-\bar{\aa}]_h}{\na \vlh^k}\eta\ dz\vsp\\
&=-\lim_{k\to\infty}\iint_{\Om_T}\pa_tv_h^k(\eta\vlh^k)\ dz-\lim_{k\to\infty}\iint_{\Om_T}\iprod{[\lv \vf_k\rv^{p-2}\vf_k-\lv \vf\rv^{p-2}\vf]_h}{\na(\eta\vlh^k)}\ dz\vsp\\
&=-\lim_{k\to\infty}\iint_{\Om_T}\pa_tv_h^k(\eta\vlh^k)\ dz.
\end{array}
\end{equation*}

A simple calculation shows
\begin{equation}\label{7.19}
\begin{array}{rcl}
\iint_{\Om_T}\pa_tv_h^k(\eta\vlh^k)\ dz &=&\frac{1}{2}\iint_{\Om_T}\pa_t\left(\left[(v_h^k)^2-(\vlh^k-v_h^k)^2\right]\eta\right)\ dz-\frac{1}{2}\iint_{\Om_T}\pa_t\eta\left[(v_h^k)^2-(\vlh^k-v_h^k)^2\right]\ dz\vsp\\
&&+\iint_{\Om_T}\pa_t\vlh^k(\vlh^k-v_h^k)\eta\ dz,
\end{array}
\end{equation}
Using \cref{convergence} along with \cref{conv1}, we see that the first two terms on the right hand side of \cref{7.19} goes to zero as $k \rightarrow \infty$. Thus, \cref{seq0} becomes
\[
\lim_{k\to\infty}\iint_{\Om_T}\iprod{[\aa^k-\bar{\aa}]_h}{\na \vlh^k}\eta\ dz=\lim_{k\to\infty}\iint_{\Om_T}\pa_t\vlh^k(\vlh^k-v_h^k)\eta\ dz.
\]

From  \cref{convergence} and \cref{conc1}, we see that 
\[
\sup_{k\ge1}\left\lV\iprod{[\aa^k-\bar{\aa}]_h}{\na \vlh^k}\right\rV_{L^\frac{p-\beta}{p-1}(\Om_T)}<\infty,
\]
from which (after possibly taking a subsequence if necessary), for any $\eta\in L^{\left(\frac{p-\beta}{p-1}\right)'}(\Om_T)$, we have
\begin{equation}\label{seq2}
\lim_{k\to\infty}\iint_{\Om_T}\iprod{[\aa^k-\bar{\aa}]_h}{\na \vlh^k}\eta\ dz=\lim_{k\to\infty}\iint_{\Om_T}\pa_t\vlh^k(\vlh^k-v_h^k)\eta\ dz.
\end{equation}

Since our weight function from \cref{def_weight} $\om\in L^\infty(\Om_T)$, we take $\eta=\om\rchi_{F_j}$ where $F_j$ is some measurable subset of $\Om_T$. Now applying H\"{o}lder's inequality and \cref{conc3} to the expression on the right hand side of \eqref{seq2}, we get
\begin{equation}\label{seq22}
\lim_{k\to\infty}\left\lv\iint_{F_j}\iprod{[\aa^k-\bar{\aa}]_h}{\na \vlh^k}\om\rchi_{F_j}\ dz\right\rv\apprle\frac{1}{\la^{p-1}}.
\end{equation}
And since $\rchi_{F_j}\om\bar{\aa}\in L^1(\Om_T)$, from \eqref{conv2}, we see that $[\bar{\aa}]_h \nabla v_{\la,h}^k \om\rchi_{F_j} \rightarrow 0$. Thus we have
\begin{equation}\label{seq3}
\lim_{k\to\infty}\left\lv\iint_{F_j}\iprod{[\aa^k]_h}{\na \vlh^k}\om\rchi_{F_j}\ dz\right\rv\apprle\frac{1}{\la^{p-1}}.
\end{equation}

Making use of \cref{seq3} in \cref{seq22}, we have the following sequence of estimates:
\[
\begin{array}{rcl}
\lim_{k\to\infty}\left\lv\iint_{F_j}\iprod{[\aa^k]_h}{\na v_h^k}\om\ dz\right\rv\vsp
&\le &\lim_{k\to\infty}\left\lv\iint_{F_j}\iprod{[\aa^k]_h}{\na \vlh^k}\om\ dz\right\rv+\lim_{k\to\infty}\left\lv\iint_{F_j}\iprod{[\aa^k]_h}{\na(\vlh^k- v_h^k)}\om\ dz\right\rv\vsp\\
&\overset{\cref{seq3}}{\apprle}& \frac{1}{\la^{p-1}}+\left( \iint_{F_j}\lv[\aa^k]_h\rv^\frac{p}{p-1}\om\ dz\right)^\frac{p-1}{p}\left(\iint_{F_j} \lv\na(\vlh^k-v_h^k)\rv^p\om\ dz\right)^\frac{1}{p}\vsp\\
&\overset{\cref{convergence}}{\apprle}& \frac{1}{\la^{p-1}}+\left(\iint_{F_j} \lv\na(\vlh^k-v_h^k)\rv^p\om\ dz\right)^\frac{1}{p}\vsp\\
& \apprle & \frac{1}{\la^{p-1}}+\left(\iint_{F_j\cap\{\vlh^k\ne v_h^k\}}\lv \na\vlh^k\rv^p\om\ dz\right)^\frac{1}{p}+\left(\iint_{F_j\cap\{\vlh^k\ne v_h^k\}}\lv \na v_h^k\rv^p\om\ dz\right)^\frac{1}{p}\vsp\\
&\overset{\redlabel{aa}{a}}{\apprle}& \frac{1}{\la^{p-1}} + o\lbr\frac{1}{\la}\rbr,
\end{array}
\]
where to obtain \redref{aa}{a}, we made use of \cref{convergence} and \cref{conc2} along with  \cref{conc5}. Here $o\lbr\frac{1}{\la}\rbr$ denotes a quantity that goes to zero as $\la \rightarrow \infty$ which holds due to \cref{lip_seq} and the choice of measurable set $F_j$. We now let $\la \rightarrow \infty$ which proves the claim.
\end{proof}

This completes the proof of the existence result.

\section*{References}

%

\end{document}